\newcommand{\C}{\mathbb{C}}
\newcommand{\g}{\mathfrak{g}}
\newcommand{\p}{\mathfrak{p}}
\newcommand{\uu}{\mathfrak{u}}
\newcommand{\bb}{\mathfrak{b}}
\newcommand{\lf}{\mathfrak{l}}
\newcommand{\ad}{\mathrm{ad}}
\newcommand{\Ad}{\mathrm{Ad}}
\numberwithin{equation}{section}
\newtheorem{thm}{Theorem}[section]
\newtheorem{lem}[thm]{Lemma}
\newtheorem{cor}[thm]{Corollary}
\newtheorem{prop}[thm]{Proposition}
\theoremstyle{definition}
\newtheorem{rem}[thm]{Remark}
\newtheorem{conj}[thm]{Conjecture}
\title[Hessenberg varieties and Poisson slices]{Hessenberg varieties and Poisson slices}
\author[Peter Crooks]{Peter Crooks}
\author[Markus R\"oser]{Markus R\"oser}
\address[Peter Crooks]{Department of Mathematics \\ Northeastern University \\ 360 Huntington Avenue \\ Boston, MA 02115, USA}
\email{p.crooks@northeastern.edu}
\address[Markus R\"oser]{Fachbereich Mathematik\\ Universit\"at Hamburg\\ Bundesstra\ss e 55 (Geomatikum) \\ 20146 Hamburg\\ Germany}
\email{markus.roeser@uni-hamburg.de}
\subjclass{14L30 (primary); 53D20, 14M17 (secondary)}
\keywords{Hessenberg variety, Poisson slice, wonderful compactification}
\begin{document}

\date{}

\dedicatory{}

%    "Communicated by" -- provide editor's name; required.
\commby{}

%    Abstract is required.
\begin{abstract}
This work pursues a circle of Lie-theoretic ideas involving Hessenberg varieties, Poisson geometry, and wonderful compactifications. In more detail, one may associate a symplectic Hamiltonian $G$-variety $\mu:G\times\mathcal{S}\longrightarrow\mathfrak{g}$ to each complex semisimple Lie algebra $\mathfrak{g}$ with adjoint group $G$ and fixed Kostant section $\mathcal{S}\subseteq\mathfrak{g}$. This variety is one of Bielawski's hyperk\"ahler slices, and it is central to Moore and Tachikawa's work on topological quantum field theories. It also bears a close relation to two log symplectic Hamiltonian $G$-varieties $\overline{\mu}_{\mathcal{S}}:\overline{G\times\mathcal{S}}\longrightarrow\mathfrak{g}$ and $\nu:\mathrm{Hess}\longrightarrow\mathfrak{g}$. The former is a Poisson transversal in the log cotangent bundle of the wonderful compactification $\overline{G}$, while the latter is the standard family of Hessenberg varieties. Each of $\overline{\mu}$ and $\nu$ is known to be a fibrewise compactification of $\mu$.

We exploit the theory of Poisson slices to relate the fibrewise compactifications mentioned above. Our main result is a canonical $G$-equivariant bimeromorphism $\mathrm{Hess}\cong\overline{G\times\mathcal{S}}$ of varieties over $\mathfrak{g}$. This bimeromorphism is shown to be a Hamiltonian $G$-variety isomorphism in codimension one, and to be compatible with a Poisson isomorphism obtained by B\u{a}libanu. We also show our bimeromorphism to be a biholomorphism if $\mathfrak{g}=\mathfrak{sl}_2$, and we conjecture that this is the case for arbitrary $\mathfrak{g}$. We conclude by discussing the implications of our conjecture for Hessenberg varieties.
\end{abstract}  

\maketitle
\begin{scriptsize}
\tableofcontents
\end{scriptsize}
\section{Introduction}

\subsection{Context}
Let $\g$ be a complex semisimple Lie algebra with adjoint group $G$ and fixed Kostant section $\mathcal{S}\subseteq\g$. These data determine a symplectic Hamiltonian $G$-variety $\mu:G\times\mathcal{S}\longrightarrow\g$ that has received considerable attention in the literature; it is perhaps the most fundamental of Bielawski's \textit{hyperk\"ahler slices} \cite{Bielawski,BielawskiComplex,CrooksVP}, and it features prominently in Moore and Tachikawa's work on topological quantum field theories \cite{Moore}. Some more recent work is concerned with the moment map $\mu:G\times\mathcal{S}\longrightarrow\g$ and its geometric properties \cite{CrooksRayan,AbeCrooks,CrooksBulletin}. One such property is the fact that $\mu$ admits two fibrewise compactifications; the first is a well-studied family of Hessenberg varieties, while the other is constructed via the wonderful compactification $\overline{G}$ of $G$. The following discussion gives some context for the two compactifications.  

Hessenberg varieties arise as a natural generalization of Grothendieck--Springer fibres, and their study is central to modern research in algebraic geometry \cite{AbeSelecta,Brosnan,PrecupTransformation,TymoczkoSelecta,DeMari}, combinatorics \cite{AbeCrooks1,AbeCombinatorial,TymoczkoContemp,Shareshian,HaradaPrecup}, representation theory \cite{BalibanuPeterson,BalibanuUniversal,BalibanuCrooks}, and symplectic geometry \cite{AbeCrooks,KostantLett}. One begins by fixing a Borel subgroup $B\subseteq G$ with Lie algebra $\mathfrak{b}\subseteq\mathfrak{g}$. A \textit{Hessenberg subspace} is then defined to be a $B$-invariant vector subspace $H\subseteq\mathfrak{g}$ that contains $\mathfrak{b}$. Each Hessenberg subspace $H\subseteq\mathfrak{g}$ determines a $G$-equivariant vector bundle $G\times_B H\longrightarrow G/B$, and the total space of this bundle is Poisson. The $G$-action on $G\times_B H$ is Hamiltonian and admits an explicit moment map $\nu_{H}:G\times_B H\longrightarrow\mathfrak{g}$. One calls 
$$\mathrm{Hess}(x,H):=\nu_{H}^{-1}(x)$$
the \textit{Hessenberg variety} associated to $H$ and $x\in\g$, and regards $\nu_{H}$ as the family of all Hessenberg varieties associated to $H$.    

The so-called \textit{standard Hessenberg subspace} is the annihilator $\mathfrak{m}$ of $[\mathfrak{u},\mathfrak{u}]$ under the Killing form, where $\mathfrak{u}$ is the nilradical of $\mathfrak{b}$. The resulting family $\nu:=\nu_{\mathfrak{m}}:G\times_B \mathfrak{m}\longrightarrow\mathfrak{g}$ is of particular importance. One reason is that the fibres of $\nu$ appear in interesting contexts; $\mathrm{Hess}(x,\mathfrak{m})$ is isomorphic to the Peterson variety if $x\in\mathfrak{g}$ is regular and nilpotent, while $\mathrm{Hess}(x,\mathfrak{m})$ is a well-studied smooth projective toric variety if $x$ is regular and semisimple. A second reason is that $G\times_B\mathfrak{m}$ enjoys a rich Poisson geometry; B\u{a}libanu has shown this variety to be log symplectic \cite{BalibanuUniversal}, while Abe and the first author have identified $G\times\mathcal{S}$ as the unique open dense symplectic leaf in $G\times_B\mathfrak{m}$ \cite{AbeCrooks}. It is in this context that one realizes $\nu:G\times_B\mathfrak{m}\longrightarrow\g$ as a fibrewise compactification of $\mu:G\times\mathcal{S}\longrightarrow\g$.

The connection between $\mu:G\times\mathcal{S}\longrightarrow\g$ and $\overline{G}$ is realized via $T^*\overline{G}(\log D)$, the \textit{log cotangent bundle} of $\overline{G}$. This variety is log symplectic, and it contains a distinguished log symplectic subvariety $\overline{G\times\mathcal{S}}\subseteq T^*\overline{G}(\log D)$. The subvariety $\overline{G\times\mathcal{S}}$ is a so-called \textit{Poisson slice} \cite{CrooksRoeserSlice} carrying a Hamiltonian $G$-action and moment map $\overline{\mu}:\overline{G\times\mathcal{S}}\longrightarrow\g$, and it contains $G\times\mathcal{S}$ as its unique open dense symplectic leaf. The moment map $\overline{\mu}:\overline{G\times\mathcal{S}}\longrightarrow\g$ is thereby a fibrewise compactification of $\mu:G\times\mathcal{S}\longrightarrow\g$.

It is natural to seek a relationship between the log symplectic fibrewise compactifications $\nu:G\times_B\mathfrak{m}\longrightarrow\g$ and $\overline{\mu}:\overline{G\times\mathcal{S}}\longrightarrow\g$ of $\mu:G\times\mathcal{S}\longrightarrow\g$. We are thereby drawn to B\u{a}libanu's recent article \cite{BalibanuUniversal}, which studies the \textit{universal centralizer} $\mathcal{Z}_{\mathfrak{g}}\longrightarrow\mathcal{S}$. B\u{a}libanu takes an appropriate Whittaker reduction of $T^*\overline{G}(\log(D))$ and obtains a log symplectic fibrewise compactification $\overline{\mathcal{Z}_{\mathfrak{g}}}\longrightarrow\mathcal{S}$ of the universal centralizer. She subsequently shows $\nu^{-1}(\mathcal{S})$ to be Poisson, and to be isomorphic to $\overline{\mathcal{Z}_{\mathfrak{g}}}$ as a Poisson variety over $\mathcal{S}$.

\subsection{Summary of results}
We use the theory of Poisson slices \cite{CrooksRoeserSlice} to construct a canonical bimeromorphism $\overline{G\times\mathcal{S}}\cong G\times_B\mathfrak{m}$ and explain its relation to B\u{a}libanu's Poisson isomorphism. To this end, fix a Borel subgroup $B$ with Lie algebra $\mathfrak{b}$. Let us also fix a principal $\mathfrak{sl}_2$-triple $\tau=(\xi,h,\eta)\in\g^{\oplus 3}$ with $\eta\in\mathfrak{b}$. One then has an associated Slodowy slice (a.k.a. Kostant section) $$\mathcal{S}:=\xi+\g_{\eta}\subseteq\g.$$ The following is our main result. 

\begin{thm}\label{Theorem: Main theorem}
The following statements hold.
\begin{itemize}
\item[(i)] There exists a Hamiltonian $G$-variety isomorphism
$$\overline{G\times\mathcal{S}}^{\circ}\overset{\cong}\longrightarrow (G\times_B\mathfrak{m})^{\circ},$$
where $\overline{G\times\mathcal{S}}^{\circ}\subseteq\overline{G\times\mathcal{S}}$ and $(G\times_B\mathfrak{m})^{\circ}\subseteq G\times_B\mathfrak{m}$ are explicit $G$-invariant open dense subvarieties with complements of codimension at least two.
\item[(ii)] The isomorphism in $\mathrm{(i)}$ restricts to a variety isomorphism
$$\overline{G\times\mathcal{S}}^{\circ}\cap\overline{\mathcal{Z}_{\mathfrak{g}}}\overset{\cong}\longrightarrow (G\times_B\mathfrak{m})^{\circ}\cap\nu^{-1}(\mathcal{S}).$$ This restricted isomorphism agrees with the restriction of B\u{a}libanu's Poisson isomorphism to $\overline{G\times\mathcal{S}}^{\circ}\cap\overline{\mathcal{Z}_{\mathfrak{g}}}$.
\item[(iii)] The isomorphism in $\mathrm{(i)}$ extends to a $G$-equivariant bimeromorphism
$$\overline{G\times\mathcal{S}}\overset{\cong}\longrightarrow G\times_B\mathfrak{m}$$ for which
$$\begin{tikzcd}
\overline{G\times\mathcal{S}} \arrow{rr}[above]{\cong} \arrow[swap]{dr}{\overline{\mu}_{\mathcal{S}}}& & G\times_B\mathfrak{m} \arrow{dl}{\nu}\\
& \mathfrak{g} & 
\end{tikzcd}$$
commutes.
\item[(iv)] The bimeromorphism in $\mathrm{(iii)}$ is a biholomorphism if $\g=\mathfrak{sl}_2$. 
\end{itemize}
\end{thm}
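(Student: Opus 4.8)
The plan is to realise both $\overline{G\times\mathcal{S}}$ and $G\times_B\m$ through the Poisson-slice formalism of \cite{CrooksRoeserSlice} and then to compare them one boundary stratum of $\overline{G}$ at a time. Recall first that $\overline{G\times\mathcal{S}}$ is the Poisson transversal $\mu_R^{-1}(\mathcal{S})\subseteq T^*\overline{G}(\log D)$ cut out by the moment map $\mu_R$ for right translation; it carries the residual Hamiltonian left $G$-action with moment map $\overline{\mu}_{\mathcal{S}}$, and over the open $G\times G$-orbit $G\subseteq\overline{G}$ it reduces to $\mu:G\times\mathcal{S}\longrightarrow\g$. On the Hessenberg side we use the closed embedding $G\times_B\m\hookrightarrow (G/B)\times\g,\ [g,x]\mapsto(gB,\Ad_g x)$, under which $\nu$ becomes the second projection, together with the identification of $G\times\mathcal{S}$ as the open dense symplectic leaf of $G\times_B\m$ from \cite{AbeCrooks}. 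One then takes $\overline{G\times\mathcal{S}}^{\circ}$ to be the preimage under $\overline{G\times\mathcal{S}}\longrightarrow\overline{G}$ of the locus over which the boundary $D$ is smooth, i.e.\ the union of $G$ with the smooth loci $S_1,\dots,S_r$ of the divisors $D_1,\dots,D_r$, and $(G\times_B\m)^{\circ}$ to be the analogous ``most nondegenerate'' open subset of the log-symplectic variety $G\times_B\m$. Because the slice projection $\overline{G\times\mathcal{S}}\longrightarrow\overline{G}$ has fibres of constant dimension $\rank\g$, and $\nu$ is correspondingly equidimensional over the relevant locus, both complements have codimension $\ge 2$.

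The technical heart of (i) is a local normal form along each $S_i$. Using the standard local structure of $\overline{G}$ transverse to $D_i$ and the residual $G$-action, I expect to show that $T^*\overline{G}(\log D)$, and hence its Poisson transversal $\overline{G\times\mathcal{S}}$, is isomorphic over a neighbourhood of $S_i$ to a product of $G\times\mathcal{S}$ with the standard one-dimensional log-symplectic model $(\C^2, z\,\partial_z\wedge\partial_w)$, compatibly with the maps to $\g$. The same normal form should be read off for $G\times_B\m$ along its degenerate locus directly from the bundle description and the formula for $\nu$, the key point being that both the divisorial strata $S_i$ and the corresponding pieces of $G\times_B\m$ are governed by the semisimple rank-one Levi subgroups attached to the simple roots. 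Matching these normal forms — crucially, matching the Poisson data and not merely the underlying varieties — and gluing the resulting local isomorphisms, which are $G$-equivariant, intertwine $\overline{\mu}_{\mathcal{S}}$ with $\nu$, and agree on the common open piece $G\times\mathcal{S}$, yields the Hamiltonian $G$-variety isomorphism of (i). I expect this stratum-by-stratum comparison, in particular the Poisson bookkeeping near the divisors, to be the main obstacle.

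Part (ii) is then a descent argument: $\overline{\mathcal{Z}_{\g}}$ is obtained from $T^*\overline{G}(\log D)$ by an appropriate Whittaker reduction along the $U$-action determined by $\eta$, and $\nu^{-1}(\mathcal{S})$ — shown to be Poisson in \cite{BalibanuUniversal} — is the corresponding reduction of $G\times_B\m$; since the isomorphism of (i) is $G$-equivariant and intertwines the two moment maps, it descends to an isomorphism of these reductions over $\mathcal{S}$, restricting to $\overline{G\times\mathcal{S}}^{\circ}\cap\overline{\mathcal{Z}_{\g}}\xrightarrow{\ \cong\ }(G\times_B\m)^{\circ}\cap\nu^{-1}(\mathcal{S})$. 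It agrees with B\u{a}libanu's isomorphism because both restrict to the identity on the open dense universal centralizer $\mathcal{Z}_{\g}$ — which embeds in each reduction as the open symplectic leaf coming from $G\times\mathcal{S}$ — and two morphisms of separated varieties that agree on a dense open subset coincide. For (iii), irreducibility of $\overline{G\times\mathcal{S}}$ and $G\times_B\m$ turns the isomorphism of (i) into a bimeromorphism $\overline{G\times\mathcal{S}}\dashrightarrow G\times_B\m$; it is $G$-equivariant since it extends the $G$-equivariant map of (i), and the triangle with $\overline{\mu}_{\mathcal{S}}$ and $\nu$ commutes since it does so on a dense open subset and $\g$ is separated.

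Finally, for (iv) I would treat $\g=\mathfrak{sl}_2$ by explicit computation. Here $G=\mathrm{PGL}_2$ and $\overline{G}=\mathbb{P}^3=\mathbb{P}(\mathrm{Mat}_{2\times 2}\C)$ with $D=\{\det=0\}$ a smooth quadric, while $\m=\g$ since $[\uu,\uu]=0$, so $G\times_B\m\cong\mathbb{P}^1\times\mathfrak{sl}_2$ with $\nu$ the second projection. One computes $T^*\mathbb{P}^3(\log D)$ and the Poisson transversal $\overline{G\times\mathcal{S}}=\mu_R^{-1}(\mathcal{S})$ in affine charts — the only delicate locus being over $D$ — and checks that $\overline{G\times\mathcal{S}}$ is smooth and that $\overline{\mu}_{\mathcal{S}}$ is a $\mathbb{P}^1$-fibration over $\mathbb{A}^3$; since the Brauer group of $\mathbb{A}^3$ is trivial this yields an isomorphism $\overline{G\times\mathcal{S}}\cong\mathbb{P}^1\times\mathfrak{sl}_2$ over $\mathfrak{sl}_2$. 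One then checks that this isomorphism restricts on $G\times\mathcal{S}$ to the canonical identification with the open leaf of $G\times_B\m$; as the bimeromorphism of (iii) and this isomorphism are two maps agreeing on the dense open $G\times\mathcal{S}$, the former equals the latter and is therefore a biholomorphism. The obstacle in (iv) is purely the explicit log-cotangent-bundle calculation on $\mathbb{P}^3$.
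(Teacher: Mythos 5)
The central step of your argument --- the construction of the isomorphism of (i) over the boundary --- is not actually carried out. You replace it with an expected ``local normal form'' stating that near each divisor stratum $S_i$ the slice $\overline{G\times\mathcal{S}}$ is a product of $G\times\mathcal{S}$ with a two-dimensional log-symplectic model; as written this is dimensionally inconsistent ($\dim(G\times\mathcal{S})=n+\ell=\dim\overline{G\times\mathcal{S}}$, so the product would have dimension $n+\ell+2$), and even with the intended correction it is an unproven claim that you yourself identify as the main obstacle. The paper's mechanism is entirely different and is where all the work lies: it takes $\overline{G\times\mathcal{S}}^{\circ}=(G\times\mathcal{S})\cup\overline{\mu}_{\mathcal{S}}^{-1}(\g^{\mathrm{rs}})$ and $(G\times_B\mathfrak{m})^{\circ}=(G\times_B\mathfrak{m}^{\times})\cup\nu^{-1}(\g^{\mathrm{rs}})$ (not the preimage of the smooth locus of $D$), shows that for $x\in\g^{\mathrm{rs}}$ both fibres are smooth projective toric $G_x$-varieties whose fixed points are canonically indexed by $\mathcal{B}_x$ and whose fans are both the Weyl-chamber decomposition of $\g_x$ (via limits along regular coweights and Lemma \ref{Lemma: 1PS}), and then glues the resulting fibrewise isomorphisms to $\rho$. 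The codimension-two estimates also do not follow from any equidimensionality of $\pi_{\mathcal{S}}$; the paper needs the transversality of the Poisson slice to all $G$-orbits (Lemma \ref{Lemma: Transverse equivariant}) together with an explicit check that $\overline{\mu}_L^{-1}(\g^{\mathrm{rs}})$ meets every component of $\pi^{-1}(D)$, and a separate root-theoretic argument on the Hessenberg side.

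There is a second gap in your passage from (i) to (iii): irreducibility alone does not turn an isomorphism of dense open subsets into a bimeromorphism in Remmert's sense, since one must verify that the closure of the graph is analytic and proper over the source. The paper obtains this by composing with the closed embeddings $(\pi,\nu):G\times_B\mathfrak{m}\hookrightarrow G/B\times\g$ and $(\pi_{\mathcal{S}},\overline{\mu}_{\mathcal{S}}):\overline{G\times\mathcal{S}}\hookrightarrow\overline{G}\times\g$, extending the components into the compact factors across the codimension-two complements by a Hartogs-type theorem, and checking that the image of the extended graph still lands in the embedded copy of the target. Your treatment of (ii) and (iv) is closer in spirit to the paper (pullback to $\mathcal{S}$ and agreement on a dense leaf for (ii); for (iv) the paper likewise exploits $\mathfrak{m}=\g$ and that every fibre of $\overline{\mu}_{\mathcal{S}}$ is $\mathbb{P}^1$, though it extends $\varphi$ by a Hartogs argument and checks bijectivity fibrewise rather than invoking a Brauer-group trivialization), but both parts depend on (i) and (iii) being established first.
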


This theorem leads to the following conjecture.

\begin{conj}\label{Conj}
The bimeromorphism in Theorem \ref{Theorem: Main theorem}(iii) is a biholomorphism.
\end{conj}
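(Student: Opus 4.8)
We expect Conjecture~\ref{Conj} to be accessible as follows. First, the problem can be reduced to a purely algebraic assertion. Since $\overline{\mu}_{\mathcal{S}}$ is proper (being a fibrewise compactification of $\mu$) and $\nu$ is separated, once one knows that the bimeromorphism $\Phi\colon\overline{G\times\mathcal{S}}\to G\times_B\m$ of Theorem~\ref{Theorem: Main theorem}(iii) is a \emph{morphism}, it is automatically proper, and it is birational because it restricts to the identity on the dense open leaf $G\times\mathcal{S}$. Theorem~\ref{Theorem: Main theorem}(i) then shows $\Phi$ to be an isomorphism over open subsets whose complements have codimension at least two, hence a small morphism; and $G\times_B\m$ is smooth, hence $\mathbb{Q}$-factorial. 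By the standard fact that a small proper birational morphism onto a $\mathbb{Q}$-factorial variety is an isomorphism, $\Phi$ would then be a biholomorphism. Thus the conjecture reduces to showing that $\Phi$ is a morphism. Using that $G\times_B\m$ is the closed incidence subvariety $\{(gB,y)\in (G/B)\times\g : \Ad_{g^{-1}}y\in\m\}$ and that the $\g$-component of $\Phi$ is the morphism $\overline{\mu}_{\mathcal{S}}$, it is enough to prove that the rational map $p\colon\overline{G\times\mathcal{S}}\to G/B$ obtained by composing $\Phi$ with the bundle projection $G\times_B\m\to G/B$ --- a priori defined only on $G\times\mathcal{S}$ --- extends to a morphism on all of $\overline{G\times\mathcal{S}}$; the pair $(\overline{\mu}_{\mathcal{S}},p)$ then lands in the closed subvariety $G\times_B\m$ and recovers $\Phi$.

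The indeterminacy locus of $p$ is $G$-invariant and, by Theorem~\ref{Theorem: Main theorem}(i), contained in $\overline{\mu}_{\mathcal{S}}^{-1}(\gsing)$. To prove it empty it suffices, by $G$-equivariance, to work in a neighbourhood of $\overline{\mu}_{\mathcal{S}}^{-1}(x)$ for one representative $x$ of each non-regular adjoint orbit. Writing $x=s+n$ for the Jordan decomposition and taking a Luna slice to $G\cdot s$, the plan is to establish local structure theorems realizing $\overline{G\times\mathcal{S}}$ near $\overline{\mu}_{\mathcal{S}}^{-1}(G\cdot s)$, $G$-equivariantly, as induced from $L=Z_G(s)$ of the analogous object $\overline{L\times\mathcal{S}_L}$ built inside $T^*\overline{L}(\log D_L)$, and $G\times_B\m$ as induced from the standard Hessenberg space of the Levi $\mathfrak{l}$, compatibly, so that $p$ is locally induced from $p_L$. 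Since $x$ non-regular forces $n$ non-regular in $\mathfrak{l}$, while $s\neq 0$ forces $\dim L<\dim G$, induction on $\dim\g$ reduces the conjecture to the following statement: $\Phi$ is a local isomorphism near $\overline{\mu}_{\mathcal{S}}^{-1}(x)$ whenever $x\in\g$ is a non-regular nilpotent element. Theorem~\ref{Theorem: Main theorem}(iv) is precisely this statement for $\g=\mathfrak{sl}_2$, where necessarily $x=0$.

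It remains to treat the nilpotent fibres, and this is where the genuine difficulty lies. One would identify each fibre $\overline{\mu}_{\mathcal{S}}^{-1}(x)$, for $x$ nilpotent, with the Hessenberg variety $\mathrm{Hess}(x,\m)$ compatibly with $\Phi$; the extreme case $x=0$ amounts to a $G$-equivariant isomorphism $\overline{\mu}_{\mathcal{S}}^{-1}(0)\cong\mathrm{Hess}(0,\m)=G/B$, which should be extracted from the most degenerate boundary stratum of $\overline{G}$ --- a tower of flag-variety bundles over $G/B\times G/B^{-}$ --- together with the Poisson-slice reduction cutting out $\mathcal{S}$; the remaining nilpotent fibres could then be reached from $\overline{\mu}_{\mathcal{S}}^{-1}(0)$ by means of a contracting $\mathbb{C}^{*}$-action of Kazhdan--Lusztig type that fixes the zero fibre. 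I expect the main obstacle to be exactly this analysis over $\gsing$: both the parabolic-induction local structure theorems and the identification of nilpotent fibres with Hessenberg varieties lie outside the regime controlled by the theory of Poisson slices --- which only sees the $G$-sweep $\g_{\mathrm{reg}}$ of $\mathcal{S}$ --- and will require a direct study of $T^*\overline{G}(\log D)$ along the boundary divisor of $\overline{G}$.
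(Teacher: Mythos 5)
This statement is Conjecture \ref{Conj} (= Conjecture \ref{Conjecture: Main conjecture}): the paper does not prove it, and only establishes the $\mathfrak{sl}_2$ case (Theorem \ref{Theorem: sl2}). What you have written is, by your own account, a research programme rather than a proof, so it cannot be accepted as one. That said, your opening reduction is correct and genuinely useful, and goes beyond what the paper records: if $\overline{\rho}$ is a morphism, then it is proper (since $\overline{\mu}_{\mathcal{S}}=\nu\circ\overline{\rho}$ is proper and $\nu$ is separated), birational, and contracts no divisor (every prime divisor of the smooth irreducible $\overline{G\times\mathcal{S}}$ meets $\overline{G\times\mathcal{S}}^{\circ}$ by Lemma \ref{Lemma: Codimgeq2}, where $\overline{\rho}$ is an open immersion by Theorem \ref{Theorem: First main}); since $G\times_B\mathfrak{m}$ is smooth, hence locally factorial, purity of the exceptional locus forces $\overline{\rho}$ to be an isomorphism. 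So the conjecture is indeed equivalent to the holomorphy of the meromorphic map $\widetilde{\pi\circ\overline{\rho}^{\circ}}:\overline{G\times\mathcal{S}}\longrightarrow G/B$ constructed in the proof of Theorem \ref{Theorem: GxBHKostantWhittaker}.

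Beyond that point there are concrete gaps. First, Theorem \ref{Theorem: Main theorem}(i) controls $\overline{\rho}$ only on $\overline{G\times\mathcal{S}}^{\circ}=(G\times\mathcal{S})\cup\overline{\mu}_{\mathcal{S}}^{-1}(\g^{\mathrm{rs}})$, so the indeterminacy locus is only known to lie over $\g\setminus\g^{\mathrm{rs}}$, not over the non-regular locus as you assert. Your induction therefore silently omits the boundary of every fibre over a regular non-semisimple element --- in particular the regular nilpotent fibre, whose Hessenberg counterpart is the Peterson variety and which is among the hardest cases. To legitimately restrict to non-regular $x$ you would first have to extend the isomorphism over all of $\g^{\mathrm{r}}$, e.g.\ by $G$-sweeping B\u{a}libanu's isomorphism $\overline{\mathcal{Z}_{\g}}\cong\nu^{-1}(\mathcal{S})$ and checking the gluing along $G_{x_{\mathcal{S}}}$-ambiguities; none of this is supplied and none of it follows from Theorem \ref{Theorem: Main theorem}(i). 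Second, the parabolic-induction local structure theorems near $\overline{\mu}_{\mathcal{S}}^{-1}(G\cdot s)$ are asserted as ``the plan,'' with no argument; note also that the Kostant section $\mathcal{S}$ of $\g$ is not adapted to the Levi $\mathfrak{l}=\g_s$, so even formulating the compatibility you need requires work. Third, the nilpotent fibres are left entirely open, and they are exactly the content of the conjecture: the paper already proves the abstract isomorphisms $\overline{\mu}_{\mathcal{S}}^{-1}(0)\cong G/B\cong\nu^{-1}(0)$ (Corollary \ref{Corollary: Flag variety} and Remark \ref{Remark: Fibre over 0}), but an abstract isomorphism of fibres does not extend $\overline{\rho}$ holomorphically across a fibre, which is what your argument requires.
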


We show this conjecture to have the following implications for Hessenberg varieties.

\begin{cor}\label{Corollary: Main corollary}
Assume that Conjecture \ref{Conj} is true, and let $\overline{\rho}:\overline{G\times\mathcal{S}}\longrightarrow G\times_B\mathfrak{m}$ be the biholomorphism from Theorem \ref{Theorem: Main theorem}$\mathrm{(iii)}$. Let $\pi:T^*\overline{G}(\log(D))\longrightarrow\overline{G}$ be the bundle projection map. If $x\in\mathfrak{g}$, then the composite map
$$\mathrm{Hess}(x,\mathfrak{m})\overset{\overline{\rho}^{-1}}\longrightarrow\overline{\mu}_{\mathcal{S}}^{-1}(x)\overset{\pi}\longrightarrow\overline{G}$$ is a $G_x$-equivariant closed embedding of algebraic varieties.
\end{cor}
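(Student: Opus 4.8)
The plan is to assume Conjecture \ref{Conj}, so that $\overline{\rho}:\overline{G\times\mathcal{S}}\longrightarrow G\times_B\mathfrak{m}$ is a $G$-equivariant biholomorphism over $\mathfrak{g}$. Fix $x\in\mathfrak{g}$. By the commutativity of the triangle in Theorem \ref{Theorem: Main theorem}(iii), $\overline{\rho}^{-1}$ restricts to a biholomorphism $\mathrm{Hess}(x,\mathfrak{m})=\nu^{-1}(x)\xrightarrow{\ \cong\ }\overline{\mu}_{\mathcal{S}}^{-1}(x)$; since both $\nu$ and $\overline{\mu}_{\mathcal{S}}$ are $G$-equivariant, this restriction is equivariant with respect to the stabilizer $G_x$. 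It therefore suffices to show that the bundle projection $\pi:T^*\overline{G}(\log(D))\longrightarrow\overline{G}$ restricts to a $G_x$-equivariant closed embedding $\overline{\mu}_{\mathcal{S}}^{-1}(x)\hookrightarrow\overline{G}$, where $G_x$ acts on $\overline{G}$ via the embedding $G\times\mathcal{S}\hookrightarrow\overline{G\times\mathcal{S}}\subseteq T^*\overline{G}(\log(D))$ composed with $\pi$ (concretely, $G$ acting on $\overline{G}$ by left translation). Equivalently, one shows that $\overline{\mu}_{\mathcal{S}}^{-1}(x)\hookrightarrow\overline{G\times\mathcal{S}}\xrightarrow{\pi}\overline{G}$ is a closed embedding.

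The key step is the following transversality/graph argument, which is the analogue for $\overline{\mu}_{\mathcal{S}}$ of the fact that $\mu:G\times\mathcal{S}\longrightarrow\mathfrak{g}$ restricts on each fibre to a closed embedding $\mu^{-1}(x)\hookrightarrow G$. Recall that $\overline{G\times\mathcal{S}}$ is a Poisson slice in $T^*\overline{G}(\log(D))$ cut out by the Slodowy-type condition $\xi+\mathfrak{g}_\eta$ in the fibre direction. Over the open dense locus $G\subseteq\overline{G}$, the restriction of $T^*\overline{G}(\log(D))$ is the ordinary cotangent bundle $T^*G\cong G\times\mathfrak{g}^*\cong G\times\mathfrak{g}$ (using the Killing form), the moment map is $(g,\zeta)\mapsto\Ad_g\zeta$, and the slice condition forces $\zeta\in\mathcal{S}$; hence $\overline{\mu}_{\mathcal{S}}^{-1}(x)\cap\pi^{-1}(G)=\{(g,\zeta):\zeta\in\mathcal{S},\ \Ad_g\zeta=x\}$ maps bijectively and biregularly onto $\{g\in G:\Ad_{g^{-1}}x\in\mathcal{S}\}$, because $\zeta$ is determined by $g$ (this is exactly the graph of the map $g\mapsto\Ad_{g^{-1}}x$, recording where it hits $\mathcal{S}$). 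So $\pi$ is already a locally closed embedding on the open part; the content is to extend this over the boundary divisor $D$. For this I would use the explicit local description of $T^*\overline{G}(\log(D))$ near $D$ via the wonderful compactification's standard affine charts: in such a chart $\overline{G}$ looks like $U^-\times\overline{T}\times U$ with $\overline{T}$ a partial toric compactification of a Cartan, and the log cotangent fibre coordinates degenerate compatibly. The same "the fibre coordinate is a function of the base point" phenomenon persists by continuity/properness: since $\overline{\mu}_{\mathcal{S}}$ is proper (it is a fibrewise compactification of $\mu$, with $\overline{G}$ proper) and $\pi$ is separated, the map $\pi\big|_{\overline{\mu}_{\mathcal{S}}^{-1}(x)}$ is proper; a proper map that is injective and an immersion is a closed embedding, so it remains only to verify injectivity and immersivity along $D$.

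I would handle injectivity along $D$ by a limit argument: two points of $\overline{\mu}_{\mathcal{S}}^{-1}(x)$ lying over the same $\overline{g}\in D$ would be limits of points in the open part lying over a common base point after perturbation, contradicting the injectivity already established over $G$; more carefully, one uses that the Poisson-slice condition is a closed condition that continues to pin down the log-cotangent fibre coordinate uniquely in terms of $\overline{g}$ and $x$, because the pairing between the log tangent and log cotangent directions is nondegenerate on all of $T^*\overline{G}(\log(D))$. Immersivity along $D$ follows from the same nondegeneracy together with the smoothness of $\overline{\mu}_{\mathcal{S}}^{-1}(x)$ — or, failing smoothness, one replaces "immersion" by "unramified" and still concludes a closed embedding from properness plus injectivity plus unramifiedness. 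Equivariance under $G_x$ is automatic throughout, since every map in sight ($\overline{\rho}$, $\pi$, the inclusions) is $G$-equivariant and $G_x$ fixes $x$. The main obstacle I anticipate is precisely the behaviour over $D$: one must check that the Poisson-slice defining equations, when written in the local toric charts of $T^*\overline{G}(\log(D))$, genuinely determine the boundary log-cotangent coordinate as a regular function of the base point — equivalently, that no two distinct boundary covectors in $\overline{G\times\mathcal{S}}$ share a base point and map to the same $x$ — and this is where the explicit structure of the Poisson slice from \cite{CrooksRoeserSlice} and the $\mathfrak{sl}_2=\g$ computation in Theorem \ref{Theorem: Main theorem}(iv) serve as the essential model case.
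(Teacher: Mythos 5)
Your reduction is the right one: granting the conjecture, $\overline{\rho}^{-1}$ restricts to a $G_x$-equivariant biholomorphism $\mathrm{Hess}(x,\mathfrak{m})\to\overline{\mu}_{\mathcal{S}}^{-1}(x)$, and everything hinges on showing that $\pi$ restricts to a closed embedding $\overline{\mu}_{\mathcal{S}}^{-1}(x)\hookrightarrow\overline{G}$. But you then treat that restriction as the hard part, proposing local toric charts near $D$, a properness-plus-injectivity-plus-unramifiedness argument, and ultimately flagging the behaviour along $D$ as an unresolved ``main obstacle.'' This is a genuine gap: the step you leave open is exactly the content of the claim, and your sketch of injectivity along $D$ (two boundary points over the same $\overline{g}$ being ``limits of points over a common base point after perturbation'') does not work as stated, since perturbed points lie over different base points and no contradiction with injectivity over $G$ arises. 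What you are missing is that no boundary analysis is needed at all: by the description \eqref{Equation: mu2Stau} (which rests on $T^*\overline{G}(\log D)$ being the restricted tautological bundle \eqref{Equation: T*GbarlogD} and on the image of $\overline{\mu}$ being $\{(x,y):\chi(x)=\chi(y)\}$), every point of $\overline{G\times\mathcal{S}}$ has the form $(\gamma,(x,x_{\mathcal{S}}))$, so the covector component is \emph{constant} on the fibre over $x$. Hence $(\pi_{\mathcal{S}},\overline{\mu}_{\mathcal{S}})$ is a closed embedding into $\overline{G}\times\g$ and $\pi$ restricted to $\overline{\mu}_{\mathcal{S}}^{-1}(x)$ is an isomorphism onto the closed subvariety $\{\gamma\in\overline{G}:(x,x_{\mathcal{S}})\in\gamma\}$ --- this is precisely Remark \ref{Remark: Projective fibres}, and it is what the paper's proof invokes.

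There is a second, smaller gap: you pass from ``$\overline{\rho}^{-1}$ restricts to a biholomorphism of fibres'' directly to ``closed embedding of algebraic varieties.'' A biholomorphism is a priori only analytic; the paper upgrades it to an algebraic isomorphism by noting that both fibres are projective (Remarks \ref{Remark: Projective fibres} and \ref{Remark: Closed embedding}) and appealing to a GAGA-type statement. You should include that step, or else weaken your conclusion to a closed embedding of complex analytic spaces.
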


More succinctly, the truth of Conjecture \ref{Conj} would imply that every Hessenberg variety in the family $\nu:G\times_B\mathfrak{m}\longrightarrow\g$ admits an equivariant closed embedding into $\overline{G}$.

\subsection{Organization}
Section \ref{Section: Some preliminaries} assembles some of the Lie-theoretic and Poisson-geometric facts, conventions, and notation underlying this paper.  The heart of our paper begins in Section \ref{Section: Poisson slices in the logarithmic cotangent bundle}, which is principally concerned with the log symplectic variety $\overline{G\times\mathcal{S}}$ and its properties. Section \ref{Section: Relation} expands our discussion to include implications for the family $\nu:G\times_B\mathfrak{m}\longrightarrow\g$. This section contains the proofs of Theorem \ref{Theorem: Main theorem} and Corollary \ref{Corollary: Main corollary}, as well as the requisite machinery. A brief list of recurring notation appears after Section \ref{Section: Relation}.  

\subsection*{Acknowledgements}
We gratefully acknowledge Ana B\u{a}libanu for constructive conversations and suggestions. The first author is supported by an NSERC Postdoctoral Fellowship [PDF--516638]. 

\section{Some preliminaries}\label{Section: Some preliminaries}
This section gathers some of the notation, conventions, and standard facts used throughout our paper. Our principal objective is to outline the Lie-theoretic constructions relevant to later sections.   

\subsection{Fundamental conventions}
This paper works exclusively over $\mathbb{C}$. We understand \textit{group action} as meaning \textit{left group action}. The dimension of an algebraic variety is the supremum of the dimensions of its irreducible components.
We use the term \textit{smooth variety} in reference to a pure-dimensional algebraic variety $X$ satisfying $\dim(T_xX)=\dim X$ for all $x\in X$.

\subsection{Lie-theoretic conventions}\label{Subsection: Lie-theoretic conventions}
Let $\g$ be a finite-dimensional, rank-$\ell$, semisimple Lie algebra with adjoint group $G$. Write 
$$\Ad:G\longrightarrow\operatorname{GL}(\mathfrak{g}),\quad g\mapsto \Ad_g,\quad g\in G$$ for the adjoint representation of $G$ on $\mathfrak{g}$, and  
$$\ad:\g\longrightarrow\mathfrak{gl}(\mathfrak{g}),\quad x\mapsto \ad_x,\quad x\in\mathfrak{g}$$ for the adjoint representation of $\mathfrak{g}$ on itself.
Each $x\in\mathfrak{g}$ admits a $G$-stabilizer
$$G_x:=\{g\in G:\mathrm{Ad}_g(x)=x\}$$ and a $\mathfrak{g}$-centralizer
$$\mathfrak{g}_x:=\mathrm{ker}(\mathrm{ad}_x)=\{y\in\mathfrak{g}:[x,y]=0\}.$$
One also has the $G$-invariant open dense subvariety
$$\mathfrak{g}^{\text{r}}:=\{x\in\mathfrak{g}:\dim(\mathfrak{g}_x)=\ell\}$$ of all regular elements in $\mathfrak{g}$.

One calls $x\in \g$ \textit{semisimple} (resp. \textit{nilpotent}) if $\ad_x\in\mathfrak{gl}(\g)$ is diagonalizable (resp. nilpotent) as a vector space endomorphism. Let $\mathfrak{g}^{\text{rs}}$ denote the $G$-invariant open dense subvariety of regular semisimple elements in $\g$. We also set
$$V^{\text{r}}:=V\cap\mathfrak{g}^{\text{r}}$$ for any subset $V\subseteq\g$.

Let $\langle\cdot,\cdot\rangle:\mathfrak{g}\otimes_{\mathbb{C}}\mathfrak{g}\longrightarrow\mathbb{C}$ denote the Killing form, and write $V^{\perp}\subseteq\g$ for the annihilator of a subspace $V\subseteq\g$ under this form. The Killing form is non-degenerate and $G$-invariant, implying that
\begin{equation}\label{Equation: Killing}\mathfrak{g}\longrightarrow\mathfrak{g}^*,\quad x\mapsto\langle x,\cdot\rangle,\quad x\in\mathfrak{g}\end{equation} 
and
$$\g\oplus\g\longrightarrow (\g\oplus\g)^*,\quad (x_1,x_2)\mapsto (\langle x_1,\cdot\rangle,-\langle x_2,\cdot\rangle),\quad (x_1,x_2)\in\g\oplus\g$$
define $G$-module and ($G\times G$)-module isomorphisms, respectively. We use these isomorphisms to freely identify $\mathfrak{g}$ with $\mathfrak{g}^*$ and $\g\oplus\g$ with $(\g\oplus\g)^*$ throughout the paper. The canonical Poisson structure on $\mathfrak{g}^*$ then renders $\mathfrak{g}$ a Poisson variety. This endows the coordinate algebra $\mathbb{C}[\mathfrak{g}]=\mathrm{Sym}(\mathfrak{g}^*)$ with a Poisson bracket, defined as follows:
$$\{f_1,f_2\}(x)=\langle x,[(df_1)_x,(df_2)_x]\rangle$$ for all $f_1,f_2\in\mathbb{C}[\mathfrak{g}]$ and $x\in\mathfrak{g}$, where $(df_1)_x,(df_2)_x\in\mathfrak{g}^*$ are regarded as elements of $\mathfrak{g}$ via \eqref{Equation: Killing}.
We also note that the symplectic leaves of $\mathfrak{g}$ are precisely the adjoint orbits
$$Gx:=\{\mathrm{Ad}_g(x):g\in G\},\quad x\in\mathfrak{g}.$$

We henceforth fix a principal $\mathfrak{sl}_2$-triple $\tau$, i.e. $\tau=(\xi,h,\eta)\in\g^{\oplus 3}$ with $\xi,h,\eta\in\mathfrak{g}^{\text{r}}$ and
$$[\xi,\eta]=h,\quad [h,\xi]=2\xi,\quad\text{and}\quad [h,\eta]=-2\eta.$$
Let $\mathfrak{b}\subseteq\g$ be the unique Borel subalgebra that contains $\eta$. The Cartan subalgebra $\mathfrak{t}:=\g_h$ then satisfies $\mathfrak{t}\subseteq\mathfrak{b}$, and this gives rise to collections of roots $\Phi\subseteq\mathfrak{t}^*$, positive roots $\Phi^+\subseteq\Phi$, negative roots $\Phi^-=-\Phi^+$, and simple roots $\Pi\subseteq\Phi^+$. Each subset $I\subseteq\Pi$ then determines parabolic subalgebras $$\mathfrak{p}_I:=\mathfrak{b}\oplus\bigoplus_{\alpha\in\Phi_I^{-}}\mathfrak{g}_{\alpha}\quad\text{and}\quad\mathfrak{p}_I^{-}=\mathfrak{b}^-\oplus\bigoplus_{\alpha\in\Phi_I^+}\mathfrak{g}_{\alpha},$$
where $\mathfrak{b}^{-}\subseteq\g$ is the Borel subalgebra opposite to $\mathfrak{b}$ with respect to $\mathfrak{t}$ and $\Phi_I^{+}$ (resp. $\Phi_I^-$) is the set of positive (resp. negative) roots in the $\mathbb{Z}$-span of $I$. Note that $$\mathfrak{l}_I:=\mathfrak{p}_I\cap\mathfrak{p}_I^{-}$$ is a Levi subalgebra of $\g$. Let $\mathfrak{u}_I$ and $\mathfrak{u}_I^-$ denote the nilradicals of $\mathfrak{p}_I$ and $\mathfrak{p}_I^{-}$, respectively, observing that
$$\mathfrak{p}_I=\mathfrak{l}_I\oplus\mathfrak{u}_I\quad\text{and}\quad\mathfrak{p}_I^{-}=\mathfrak{l}_I\oplus\mathfrak{u}_I^-.$$
We have $\mathfrak{p}_{\emptyset}=\mathfrak{b}$, $\mathfrak{p}_{\emptyset}^{-}=\mathfrak{b}^{-}$, and $\mathfrak{l}_{\emptyset}=\mathfrak{t}$, and we adopt the notation
$$\quad\mathfrak{u}:=\mathfrak{u}_{\emptyset}\quad\text{and}\quad\mathfrak{u}^{-}:=\mathfrak{u}_{\emptyset}^{-}.$$  

We conclude with a brief discussion of invariant theory. To this end, consider the finitely-generated subalgebra of $\mathbb{C}[\g]$ given by
$$\mathbb{C}[\g]^G:=\{f\in\mathbb{C}[\g]:f(\mathrm{Ad}_g(x))=f(x)\text{ for all }g\in G\text{ and }x\in\g\}.$$ Denote by 
$$\chi:\g\longrightarrow\mathrm{Spec}(\mathbb{C}[\g]^G)$$
the morphism of affine varieties corresponding to the inclusion $\mathbb{C}[\g]^G\subseteq\mathbb{C}[\g]$. This morphism is called the \textit{adjoint quotient} of $\g$.

Now consider the Slodowy slice $$\mathcal{S}:=\xi+\g_{\eta}$$ determined by our principal $\mathfrak{sl}_2$-triple $\tau$. This slice consists of regular elements, and it is a fundamental domain for the adjoint action of $G$ on $\mathfrak{g}^{\text{r}}$ (see \cite[Theorem 8]{KostantLie}). This slice is also called a \textit{Kostant section}, reflecting the fact that
$$\chi\big\vert_{\mathcal{S}}:\mathcal{S}\longrightarrow\mathrm{Spec}(\mathbb{C}[\mathfrak{g}]^G)$$ is an isomorphism (see \cite[Theorem 7]{KostantLie}). We may therefore consider
$$x_{\mathcal{S}}:=(\chi\big\vert_{\mathcal{S}})^{-1}(\chi(x))$$ for each $x\in\mathfrak{g}$, i.e. $x_{\mathcal{S}}$ is the unique point at which $\mathcal{S}$ meets $\chi^{-1}(\chi(x))$. It is known that
$$\chi^{-1}(\chi(x))=\overline{Gx_{\mathcal{S}}}\quad\text{and}\quad\chi^{-1}(\chi(x))\cap\mathfrak{g}^{\text{r}}=Gx_{\mathcal{S}}$$ for all $x\in\mathfrak{g}$, and that $$\chi^{-1}(\chi(x))=Gx=Gx_{\mathcal{S}}$$ for all $x\in\mathfrak{g}^{\text{rs}}$ (see \cite[Theorem 3]{KostantLie}). 
 
\subsection{Poisson varieties}
Let $X$ be a smooth variety with structure sheaf $\mathcal{O}_X$ and tangent bundle $TX$. Suppose that $P$ is a global section of $\Lambda^2(TX)$, and consider the bracket on $\mathcal{O}_X$ defined by
$$\{f_1,f_2\}:=P(df_1\wedge df_2)$$
for all open subsets $U\subseteq X$ and functions $f_1,f_2\in\mathcal{O}_X(U)$. We refer to $(X,P)$ as a \textit{Poisson variety} if the aforementioned bracket renders $\mathcal{O}_X$ a sheaf of Poisson algebras. In this case, $P$ is called the \textit{Poisson bivector}. We shall always understand $(X_1\times X_2,P_1\oplus(-P_2))$ as being the product of the Poisson varieties $(X_1,P_1)$ and $(X_2,P_2)$. We also note that every symplectic variety is canonically a Poisson variety.

Let $(X,P)$ be a Poisson variety. We may contract $P$ with covectors and realize it as a bundle morphism $$P:T^*X\longrightarrow TX,$$ whose image is a holomorphic distribution on $X$. The maximal integral submanifolds of this distribution are called the \textit{symplectic leaves} of $X$, and each comes equipped with a holomorphic symplectic form. The symplectic form $\omega_L$ on a symplectic leaf $L\subseteq X$ is constructed as follows. One defines the \textit{Hamiltonian vector field} of a locally defined function $f\in\mathcal{O}_X$ by
$$H_f:=-P(df).$$ The gives rise to the tangent space description
$$T_xL=\{(H_f)_x:f\in\mathcal{O}_X\}$$
for all $x\in L$, and one has
$$(\omega_L)_x((H_{f_1})_x,(H_{f_2})_x)=\{f_1,f_2\}(x)$$ for all $x\in L$ and $f_1,f_2\in\mathcal{O}_X$ defined near $x$.

We now briefly discuss the notion of a \textit{log symplectic variety} \cite{Gualtieri,GualtieriPym,PymSchedler,Goto,Radko,GuilleminAdv,GMP}, a slight variant of a symplectic variety in the Poisson category. To this end, let $(X,P)$ be a Poisson variety with a unique open dense symplectic leaf. It follows that $\dim X=2n$ for some non-negative integer $n$, and one may consider the section $P^n$ of $\Lambda^{2n}(TX)$. One calls $(X,P)$ a \textit{log symplectic variety} if the vanishing locus of $P^n$ is a reduced normal crossing divisor $Y\subseteq X$. In this case, we call $Y$ \textit{the divisor} of $(X,P)$.

\subsection{Poisson slices}\label{Subsection: Poisson slices}
Let $(X,P)$ be a Poisson variety. A smooth locally closed subvariety $Z\subseteq X$ is called a \textit{Poisson transversal} if
$$T_zX=T_zZ\oplus P_z((T_zZ)^{\dagger})$$ for all $z\in Z$, where $(T_zZ)^{\dagger}\subseteq T_z^*X$ is the annihilator of $T_zZ$. This decomposition induces an inclusion $T^*Z\subseteq T^*X$, and one has
$P(T^*Z)\subseteq TZ\subseteq TX$. The restriction
$$P_Z:=P\big\vert_{T^*Z}:T^*Z\longrightarrow TZ$$ is then a Poisson bivector on $Z$. We regard all Poisson transversals as coming equipped with the aforementioned Poisson variety structure.

Now suppose that $(X,P,\nu)$ is a Hamiltonian $G$-variety, i.e. $(X,P)$ is Poisson variety endowed with a Hamiltonian $G$-action and moment map $\nu:X\longrightarrow\g$. Let us also recall the Slodowy slice $\mathcal{S}\subseteq\g$ fixed in Section \ref{Subsection: Lie-theoretic conventions}. The preimage $\nu^{-1}(\mathcal{S})$ is a Poisson transversal in $X$ and an example of a Poisson slice \cite{CrooksRoeserSlice}. Some salient properties of this slice appear in the following specialized version of \cite[Corollary 3.7]{CrooksRoeserSlice}.

\begin{prop}\label{Proposition: Utility}
Suppose that $(X,P,\nu)$ is a Hamiltonian $G$-variety.
\begin{itemize}
\item[(i)] If $(X,P)$ is symplectic, then $\nu^{-1}(\mathcal{S})$ is a symplectic subvariety of $X$. The symplectic structure on $\nu^{-1}(\mathcal{S})$ coincides with the Poisson structure that $\nu^{-1}(\mathcal{S})$ inherits as a Poisson transversal.
\item[(ii)] Suppose that $(X,P)$ is log symplectic with divisor $Y$, and that $Z$ is an irreducible component of $\nu^{-1}(\mathcal{S})$. The Poisson structure that $Z$ inherits from $\nu^{-1}(\mathcal{S})$ being a Poisson transversal is then log symplectic with divisor $Z\cap Y$.  
\end{itemize}
\end{prop}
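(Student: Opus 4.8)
\textbf{Proof strategy for Proposition \ref{Proposition: Utility}.}

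The plan is to deduce both parts from the general Poisson-transversal picture already set up in Section \ref{Subsection: Poisson slices}, together with the standard fact that $\mathcal{S}=\nu^{-1}(\mathcal{S})$ is a Poisson transversal because $\mathcal{S}$ is transverse to the adjoint orbits in $\mathfrak{g}$ and $\nu$ is a moment map. For part (i), I would argue as follows. We already know $\nu^{-1}(\mathcal{S})$ carries a Poisson bivector $P_{\nu^{-1}(\mathcal{S})}$ obtained by restricting $P$ to $T^*(\nu^{-1}(\mathcal{S}))\subseteq T^*X$. When $(X,P)$ is symplectic, $P:T^*X\to TX$ is an isomorphism; the transversality condition $T_zX=T_z(\nu^{-1}(\mathcal{S}))\oplus P_z((T_z\nu^{-1}(\mathcal{S}))^{\dagger})$ then forces $P_z$ to restrict to an isomorphism $T_z^*(\nu^{-1}(\mathcal{S}))\to T_z(\nu^{-1}(\mathcal{S}))$ at every point (a dimension count plus injectivity, both inherited from $P$ being invertible on the ambient cotangent space). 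Hence $P_{\nu^{-1}(\mathcal{S})}$ is nondegenerate, i.e. it is the Poisson bivector of a symplectic form, and that symplectic form is by construction the one induced by restriction; there is nothing further to reconcile. I expect this part to be essentially immediate once the transversality condition is unpacked.

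For part (ii), let $2n=\dim X$ and let $Y\subseteq X$ be the divisor of the log symplectic structure, i.e. the reduced normal crossing divisor cut out by the vanishing of $P^n\in\Gamma(\Lambda^{2n}TX)$. Let $Z$ be an irreducible component of $\nu^{-1}(\mathcal{S})$ and put $2k=\dim Z$ — one must first note $Z$ is even-dimensional, which follows because $P_Z$ is nondegenerate precisely on the open dense locus where $Z$ meets the symplectic leaf of $X$, so $Z$ inherits a unique open dense symplectic leaf and hence has even dimension. The heart of the argument is then a pointwise linear-algebra computation relating $(P_Z)^k$ to $P^n$ along $Z$. Choose local coordinates near a point $z\in Z$ adapting to the splitting $T_zX=T_zZ\oplus N_z$ with $N_z:=P_z((T_zZ)^{\dagger})$; crucially, on the log-symplectic side the normal-crossing divisor $Y$ is transverse (in the appropriate sense) to the Poisson-transversal $Z$, so $Z\cap Y$ is again a normal crossing divisor \emph{in $Z$}, and the local defining functions of $Y$ restrict to those of $Z\cap Y$. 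Writing $P$ in a log-Darboux-type normal form adapted to $Y$ and to the transversal $Z$, one checks that $P^n$ factorizes, along $Z$, as $(P_Z)^k$ wedged with the ``normal'' part of the top power, and that the normal part is a nonvanishing section. Consequently the vanishing locus of $(P_Z)^k$ in $Z$ equals $Z\cap Y$ as a divisor, with reducedness and the normal crossing property transported from those of $Y$. This identifies $(Z,P_Z)$ as log symplectic with divisor $Z\cap Y$.

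The main obstacle is the transversality/normal-form step in part (ii): one needs that a Poisson transversal $Z$ meets the divisor $Y$ of a log symplectic structure in a way that is again normal crossing, and that the top power $P^n$ restricted along $Z$ genuinely splits off a nonvanishing normal factor times $(P_Z)^k$. I would handle this by invoking a log-Darboux / Weinstein-type local model for log symplectic Poisson structures along a normal crossing divisor, restricting it to the transversal, and carrying out the wedge-power bookkeeping in those coordinates; this is exactly the content quoted from \cite[Corollary 3.7]{CrooksRoeserSlice}, so in the present paper it suffices to cite that result and spell out only the specialization to the Slodowy-slice moment-map preimage. Parts (i) and the even-dimensionality remark are then routine corollaries of the same local picture.
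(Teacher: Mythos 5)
Your proposal matches the paper exactly: Proposition \ref{Proposition: Utility} is stated there only as a specialization of \cite[Corollary 3.7]{CrooksRoeserSlice}, with no independent proof given, so falling back on that citation --- as you do at the end --- is precisely the paper's approach. Your sketch of the underlying argument (nondegeneracy of $P_Z$ forced by the transversality splitting in (i), and the pointwise factorization $P^n\vert_Z=\binom{n}{k}(P_Z)^k\wedge\sigma^{n-k}$ with $\sigma$ the nondegenerate normal component in (ii)) is a faithful account of what the cited corollary involves, and the one step you rightly flag as nontrivial --- that no component of $\nu^{-1}(\mathcal{S})$ lies in $Y$ and that the intersection with $Y$ is again reduced normal crossing --- is exactly what is established in that reference.
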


\begin{rem}\label{Remark: Utility}
Analogous statements hold in the case of a Poisson variety $(X,P)$ endowed with a Hamiltonian ($G\times G$)-action and moment map $\nu:X\longrightarrow\g\oplus\g$. The preimage $\nu^{-1}(\mathcal{S}\times\mathcal{S})$ is a Poisson transversal in $X$, and Proposition \ref{Proposition: Utility} holds with ``$\nu^{-1}(\mathcal{S}\times\mathcal{S})$" in place of ``$\nu^{-1}(\mathcal{S})$".
\end{rem} 
 
\section{Poisson slices and the wonderful compactification}\label{Section: Poisson slices in the logarithmic cotangent bundle}
This section considers the implications of Proposition \ref{Proposition: Utility} for the cases $X=T^*G$ and  $X=T^*\overline{G}(\log D)$, where $T^*\overline{G}(\log D)$ is the log cotangent bundle of the wonderful compactification $\overline{G}$.

\subsection{The wonderful compactification of $G$}\label{Subsection: Wonderful}
Recall the Lie-theoretic notation and conventions established in Section \ref{Subsection: Lie-theoretic conventions}. The adjoint action of $G\times G$ on $\g\oplus \g$ is then given by $$(g_1,g_2)\cdot (x,y) = (\Ad_{g_1}(x),\Ad_{g_2}(y)),\quad (g_1,g_2)\in G\times G,\text{ }(x,y)\in\g\oplus\g.$$ This induces an action on the Grassmannian
$\mathrm{Gr}(n,\g\oplus\g)$ of $n$-dimensional subspaces in $\g\oplus\g$, where $n=\dim\g$. 
Now consider the point  
$$\g_\Delta := \{(x,x)\colon x\in \g\}\in\mathrm{Gr}(n,\g\oplus\g)$$ and set
$$\gamma_g:=(g,e)\cdot\mathfrak{g}_{\Delta}=\{(\mathrm{Ad}_g(x),x):x\in\mathfrak{g}\}\in\mathrm{Gr}(n,\g\oplus\g)$$ for each $g\in G$. 
The map
\begin{equation}\label{Equation: Def of gamma}\kappa:G\longrightarrow\mathrm{Gr}(n,\g\oplus \g),\quad g\mapsto\gamma_g,\quad g\in G\end{equation}
is a ($G\times G$)-equivariant locally closed embedding. The closure of its image is a ($G\times G$)-invariant closed subvariety of $\mathrm{Gr}(n,\g\oplus\g)$, often denoted $$\overline{G}\subseteq\mathrm{Gr}(n,\g\oplus \g)$$
and called the \textit{wonderful compactification} of $G$.

It is known that $\overline{G}$ is smooth \cite[Proposition 2.14]{Evens}, and that $D := \overline{G}\setminus G$ is a normal crossing divisor \cite[Theorem 2.22]{Evens}. One also knows that $\overline{G}$ is stratified into finitely many ($G\times G$)-orbits, indexed by the subsets $I\subseteq\Pi$ \cite[Theorem 2.22 and Remark 3.9]{Evens}. To describe the ($G\times G$)-orbit corresponding to $I\subseteq\Pi$, recall the Levi decompositions 
$$\p_I= \lf_I\oplus\uu_I\quad\text{and}\quad \p_I^-= \lf_I\oplus\uu_I^-$$
from Section \ref{Subsection: Lie-theoretic conventions}. These allow us to define an $n$-dimensional subspace of $\g\oplus\g$ by  
\begin{equation}\label{Equation: Basepoint} \p_I\times_{\lf_I}\p_I^- := \{(x,y)\in\p_I\oplus\p_I^-\colon x\text{ and }y\text{ have the same projection to }\lf_I\}.\end{equation} The ($G\times G$)-orbit corresponding to $I$ is then given by
$$(G\times G)(\p_I\times_{\lf_I}\p_I^-)\subseteq\overline{G}.$$
The following lemma (cf. \cite[Remark 3.9]{Evens}) implies that $\p_I\times_{\lf_I}\p_I^-\in\overline{G}$, justifying the inclusion asserted above. This lemma features prominently in later sections.
 
\begin{lem}\label{Lemma: 1PS}
Let $\tilde T\subseteq G$ be a maximal torus with Lie algebra $\tilde{\mathfrak{t}}\subseteq\g$, and consider a one-parameter subgroup $\lambda:\mathbb{C}^{\times}\longrightarrow \tilde T$. Let $\p$ be the parabolic subalgebra spanned by $\tilde{\mathfrak{t}}$ and the root spaces $\g_\alpha$ for all roots $\alpha$ of $(\g,\tilde{\mathfrak{t}})$ satisfying $(\alpha,\lambda)\geq 0$, where $(\cdot,\cdot)$ is the pairing between weights and coweights. Let $\lf\subseteq\g$ be the Levi subalgebra spanned by $\tilde{\mathfrak{t}}$ and all $\g_\alpha$ with $(\alpha,\lambda) = 0$. Write $\p^-$ for the opposite parabolic, spanned by $\tilde{\mathfrak{t}}$ and those root spaces $\g_\alpha$ such that $(\alpha,\lambda)\leq 0$. We then have 
$$\lim_{t\longrightarrow\infty}(\lambda(t),e)\cdot\g_\Delta = \p\times_{\lf}\p^-$$
in $\mathrm{Gr}(n,\g\oplus\g)$, where the right-hand side is defined analogously to \eqref{Equation: Basepoint}.
\end{lem}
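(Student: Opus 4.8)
The plan is to compute the limit directly in the Grassmannian by exhibiting, for each $t\in\mathbb{C}^{\times}$, an explicit basis of $(\lambda(t),e)\cdot\g_\Delta$ adapted to the root-space decomposition, and then letting $t\longrightarrow\infty$ in the Plücker coordinates (equivalently, in a suitable limiting frame). First I would decompose $\g=\tilde{\mathfrak{t}}\oplus\bigoplus_{\alpha}\g_\alpha$ with respect to $\tilde{\mathfrak{t}}$ and note that, since $\lambda$ takes values in $\tilde T$, the adjoint action satisfies $\mathrm{Ad}_{\lambda(t)}|_{\g_\alpha}=t^{(\alpha,\lambda)}\,\mathrm{Id}_{\g_\alpha}$ and $\mathrm{Ad}_{\lambda(t)}|_{\tilde{\mathfrak{t}}}=\mathrm{Id}_{\tilde{\mathfrak{t}}}$. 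Hence $(\lambda(t),e)\cdot\g_\Delta$ is spanned by the vectors $(H,H)$ for $H\in\tilde{\mathfrak{t}}$, together with $(t^{(\alpha,\lambda)}e_\alpha,\,e_\alpha)$ for each root vector $e_\alpha$.

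Next I would rescale these spanning vectors, which does not change the point they span in $\mathrm{Gr}(n,\g\oplus\g)$: leave $(H,H)$ untouched; for $\alpha$ with $(\alpha,\lambda)>0$ divide by $t^{(\alpha,\lambda)}$ to get $(e_\alpha,\,t^{-(\alpha,\lambda)}e_\alpha)\longrightarrow(e_\alpha,0)$; for $\alpha$ with $(\alpha,\lambda)<0$ leave as $(t^{(\alpha,\lambda)}e_\alpha,\,e_\alpha)\longrightarrow(0,e_\alpha)$; and for $\alpha$ with $(\alpha,\lambda)=0$ the vector is already $(e_\alpha,e_\alpha)$, independent of $t$. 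The limiting $n$ vectors are therefore $\{(H,H):H\in\tilde{\mathfrak{t}}\}\cup\{(e_\alpha,0):(\alpha,\lambda)>0\}\cup\{(0,e_\alpha):(\alpha,\lambda)<0\}\cup\{(e_\alpha,e_\alpha):(\alpha,\lambda)=0\}$. I would check these are linearly independent (the span has dimension $\ell+\#\{\alpha:(\alpha,\lambda)\neq0\}+\#\{\alpha:(\alpha,\lambda)=0\}=\ell+\#\Phi=n$) and that their span is exactly $\p\times_{\lf}\p^-$ as defined in \eqref{Equation: Basepoint}: the first coordinate lands in $\tilde{\mathfrak{t}}\oplus\bigoplus_{(\alpha,\lambda)>0}\g_\alpha\oplus\bigoplus_{(\alpha,\lambda)=0}\g_\alpha=\p$, the second in $\p^-$, and the common $\lf=\tilde{\mathfrak{t}}\oplus\bigoplus_{(\alpha,\lambda)=0}\g_\alpha$ projection agrees by construction; a dimension count then forces equality.

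To make ``$\lim_{t\to\infty}$'' rigorous, I would pass to the standard coordinate patch of $\mathrm{Gr}(n,\g\oplus\g)$ containing the limit point — i.e., write subspaces as graphs of linear maps from $\p\times_{\lf}\p^-$ to a fixed complement — and observe that in these coordinates the rescaled frame above depends polynomially on $t^{-1}$, so the limit as $t\longrightarrow\infty$ is the value at $t^{-1}=0$, which is precisely $\p\times_{\lf}\p^-$. Alternatively one can phrase this via Plücker coordinates: the Plücker vector of $(\lambda(t),e)\cdot\g_\Delta$, normalized by its dominant-order term, converges to the Plücker vector of $\p\times_{\lf}\p^-$. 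The main obstacle here is purely bookkeeping: one must be careful that the rescalings are applied consistently across a genuine basis (not an overdetermined spanning set) and that the chosen affine chart of the Grassmannian really does contain both $(\lambda(t),e)\cdot\g_\Delta$ for $t$ near $\infty$ and the limit point — this is where a clean choice of complement, adapted to the $(\alpha,\lambda)$-grading, pays off. Everything else is a direct computation with the root-space grading and a dimension count.
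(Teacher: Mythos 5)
Your proposal is correct and follows essentially the same route as the paper: both exhibit an explicit $t$-dependent frame for $(\lambda(t),e)\cdot\g_\Delta$ adapted to the root-space decomposition, rescale so that the only surviving powers of $t$ are negative, and identify the limiting span with $\p\times_{\lf}\p^-$, justifying convergence in the Grassmannian via the Pl\"ucker embedding (the paper) or equivalently an affine chart (your alternative). The only cosmetic difference is that the paper organizes the roots via a choice of Borel subalgebra $\tilde{\mathfrak{b}}\subseteq\p$ and the subset $I$ of simple roots, whereas you sort them directly by the sign of $(\alpha,\lambda)$.
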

\begin{proof}
Choose a Borel subalgebra $\tilde{\bb}\subseteq\g$ satisfying $\tilde{\mathfrak{t}}\subseteq\tilde{\bb}\subseteq \p$, and write $\tilde{\Phi}$, $\tilde{\Phi}^+$, and $\tilde{\Pi}$ for the associated sets of roots, positive roots, and simple roots, respectively. The subset $I:=\{\alpha\in\tilde{\Pi}:(\alpha,\lambda)=0\}$ then corresponds to the standard parabolic subalgebra $\p$. Let $\tilde{\Phi}_I^+\subseteq\tilde{\Phi}^+$ denote the set of positive roots in the $\mathbb{Z}$-span of $I$. We then have
$$\tilde{\Phi}_I^+ = \{\alpha\in\tilde{\Phi}^+:(\alpha,\lambda)=0\}\quad\text{and}\quad\tilde{\Phi}^+\setminus\tilde{\Phi}_I^+= \{\alpha\in\tilde{\Phi}^+:(\alpha,\lambda)>0\}.$$

Choose a non-zero root vector $e_\alpha\in\g_{\alpha}$ for each $\alpha\in\tilde{\Phi}$, and fix a basis $\{h_1,\dots,h_{\ell}\}$ of $\tilde{\mathfrak{t}}$. It follows that 
$$\{(e_\alpha,e_\alpha)\}_{\alpha\in\tilde{\Phi}}\cup\{(h_i,h_i)\}_{i=1}^{\ell}$$
is a basis of $\g_{\Delta}$.
Now set $$E_\alpha:=(e_\alpha,e_\alpha),\quad E^1_\alpha:= (e_\alpha,0),\quad\text{and}\quad E^2_\alpha:=(0,e_{\alpha})$$ 
for each $\alpha\in\tilde{\Phi}$, and also write $$H_i:=(h_i,h_i)$$ for $i=1,\dots,\ell$. Observe that $(\lambda(t),e)\cdot\g_\Delta$ then  has a basis of
$$\{t^{(\alpha,\lambda)}E^1_\alpha + E^2_\alpha\}_{\alpha\in\tilde\Phi^+\setminus\tilde{\Phi}_I^+}\cup\{t^{-(\alpha,\lambda)}E^1_{-\alpha}+E^2_{-\alpha}\}_{\alpha\in\tilde\Phi^+\setminus\tilde{\Phi}_I^+}\cup\{E_\beta\}_{\beta\in\tilde{\Phi}_I^+}\cup\{E_{-\beta}\}_{\beta\in\tilde{\Phi}_I^+}\cup\{H_i\}_{i=1}^\ell.$$ The image of $(\lambda(t),e)\cdot\g_\Delta$ under the Pl\"ucker embedding $$\vartheta:\mathrm{Gr}(n,\g\oplus\g)\longrightarrow \mathbb{P}(\Lambda^n(\g\oplus\g)),\quad V\mapsto [\Lambda^nV],\quad V\in\mathrm{Gr}(n,\g\oplus\g)$$ is therefore
$$\vartheta((\lambda(t),e)\cdot\g_\Delta) = \left[\bigwedge_{\alpha\in\tilde{\Phi}^+\setminus\tilde{\Phi}_I^+}(t^{(\alpha,\lambda)}E^1_\alpha+E^2_\alpha)\wedge \bigwedge_{i=1}^\ell H_i\wedge\bigwedge_{\beta\in\tilde{\Phi}_I^+}(E_{\beta}\wedge E_{-\beta})\wedge \bigwedge_{\alpha\in\tilde{\Phi}^+\setminus\tilde{\Phi}_I^+}(t^{-(\alpha,\lambda)}E^1_{-\alpha}+E^2_{-\alpha})\right].$$
We have 
$$\bigwedge_{\alpha\in\tilde\Phi^+\setminus\tilde{\Phi}_I^+}(t^{(\alpha,\lambda)}E^1_\alpha+E^2_\alpha) = t^{\sum_{\alpha\in\tilde\Phi^+\setminus\tilde{\Phi}_I^+}(\alpha,\lambda)}\bigg(\bigwedge_{\alpha\in\tilde{\Phi}^+\setminus\tilde{\Phi}_I^+}(E^1_\alpha+t^{-(\alpha,\lambda)}E^2_\alpha)\bigg),$$
and thus
$$\vartheta((\lambda(t),e)\cdot\g_\Delta) = \left[\bigwedge_{\alpha\in\tilde{\Phi}^+\setminus\tilde{\Phi}_I^+}(E^1_\alpha+t^{-(\alpha,\lambda)}E^2_\alpha)\wedge \bigwedge_{i=1}^\ell H_i\wedge\bigwedge_{\beta\in\tilde{\Phi}_I^+}(E_{\beta}\wedge E_{-\beta})\wedge \bigwedge_{\alpha\in\tilde{\Phi}^+\setminus\tilde{\Phi}_I^+}(t^{-(\alpha,\lambda)}E^1_{-\alpha}+E^2_{-\alpha})\right].$$
All exponents of $t$ appearing in this expression are strictly negative, implying that 
$$\lim_{t\longrightarrow\infty}\vartheta((\lambda(t),e)\cdot\g_\Delta) = \left[\bigwedge_{\alpha\in\tilde{\Phi}^+\setminus\tilde{\Phi}_I^+}E^1_\alpha\wedge \bigwedge_{i=1}^\ell H_i\wedge\bigwedge_{\beta\in\tilde{\Phi}_I^+}(E_{\beta}\wedge E_{-\beta})\wedge \bigwedge_{\alpha\in\tilde{\Phi}^+\setminus\tilde{\Phi}_I^+}E^2_{-\alpha}\right] = \vartheta(\p \times_{\lf}\p^-).$$
The desired conclusion now follows from the ($G\times G$)-equivariance of the Pl\"ucker embedding $\vartheta$.
\end{proof}

\subsection{The slices $G\times\mathcal{S}$ and $\overline{G\times\mathcal{S}}$}\label{Subsection: KW}
One can consider the \textit{log cotangent bundle}
$$\pi: T^*\overline{G}(\log D)\longrightarrow \overline{G},$$
i.e. the vector bundle associated with the locally free sheaf of logarithmic one-forms on $(\overline{G},D)$.
It turns out that $T^*\overline{G}(\log D)$ is the pullback of the tautological bundle $\mathbb
T\longrightarrow \mathrm{Gr}(n,\g\oplus \g)$ to the subvariety $\overline{G}$ (see \cite[Section 3.1]{BalibanuUniversal} or \cite[Example 2.5]{BrionVanishing}). In other words, 
\begin{equation}\label{Equation: T*GbarlogD}
T^*\overline{G}(\log D) = \{(\gamma,(x,y))\in \overline{G}\times(\g\oplus\g)\colon (x,y)\in\gamma\}
\end{equation}
and $$\pi(\gamma,(x,y))=\gamma$$ for all $(\gamma,(x,y))\in T^*\overline{G}(\log D)$. 

The variety $T^*\overline{G}(\log D)$ carries a natural log symplectic structure, some aspects of which we now describe. To this end, use the left trivialization and Killing form to identify $T^*G$ with $G\times\g$. It is straightforward to verify that
\begin{equation}\label{Equation: Action definition}(g_1,g_2)\cdot(h,x) = (g_1hg_2^{-1}, \Ad_{g_2}(x)), \quad (g_1,g_2)\in G\times G,\text{ }(h,x)\in G\times\g\end{equation}
defines a Hamiltonian action of $G\times G$ on $T^*G$, and that
\begin{equation}\label{Equation: MomentMapT*G}
\mu = (\mu_L,\mu_R): T^*G \longrightarrow \g\oplus\g,\quad (g,x)\mapsto (\Ad_g(x), x),\quad (g,x)\in G\times\g
\end{equation}
is a moment map.

The map
\begin{equation}\label{Equation: Open embedding}\Psi:T^*G\longrightarrow T^*\overline{G}(\log D),\quad (g,x)\mapsto (\gamma_g,(\mathrm{Ad}_g(x),x)),\quad (g,x)\in G\times\g\end{equation} 
defines an open embedding of $T^*G$ into $T^*\overline{G}(\log D)$, and it fits into a pullback square 
$$\begin{tikzcd}
T^*G \arrow[r, "\Psi"] \arrow[d]
& T^*\overline{G}(\log D) \arrow[d] \\
G \arrow[r, "\kappa"]
& \overline{G}
\end{tikzcd}.$$
This embedding is known to define a symplectomorphism from $T^*G$ to the unique open dense symplectic leaf in $T^*\overline{G}(\log D)$ \cite[Section 3.3]{BalibanuUniversal}. On the other hand, the following defines a Hamiltonian action of $G\times G$ on $T^*\overline{G}(\log D)$: 
\begin{equation}\label{Equation: Log action}(g_1,g_2)\cdot (\gamma,(x,y)):=((g_1,g_2)\cdot\gamma,(\mathrm{Ad}_{g_1}(x),\mathrm{Ad}_{g_2}(y))),\quad (g_1,g_2)\in G\times G,\text{ }(\gamma,(x,y))\in T^*\overline{G}(\log D),\end{equation}
where $(g_1,g_2)\cdot\gamma$ refers to the action of $G\times G$ on $\overline{G}$. An associated moment map is given by 
$$\overline{\mu}=(\overline{\mu}_L,\overline{\mu}_R):T^*\overline{G}(\log D)\longrightarrow\g\oplus\g,\quad (\gamma,(x,y))\mapsto (x,y),\quad (\gamma,(x,y))\in T^*\overline{G}(\log D)$$
(see \cite[Section 3.2]{BalibanuUniversal} and \cite[Example 2.5]{BrionVanishing}).
The open embedding $\Psi$ is then ($G\times G$)-equivariant and satisfies
$$\Psi^*\overline{\mu}=\mu.$$

We now discuss the Poisson slices
$$G\times\mathcal{S}=\mu_R^{-1}(\mathcal{S})\subseteq T^*G\quad\text{and}\quad\overline{G\times\mathcal{S}}:=\overline{\mu}_R^{-1}(\mathcal{S})\subseteq T^*\overline{G}(\log D),$$  
using \cite[Sections 3.2 and 3.2]{CrooksRoeserSlice} as our reference. The slice former is symplectic, while the latter is log symplectic. One also knows that $\Psi:T^*G\longrightarrow T^*\overline{G}(\log D)$ restricts to a $G$-equivariant open embedding
\begin{equation}\label{Equation: Restricted symplecto}\Psi\big\vert_{G\times\mathcal{S}}:G\times\mathcal{S}\longrightarrow\overline{G\times\mathcal{S}},\end{equation}
and that this embedding is a symplectomorphism onto the unique open dense symplectic leaf in $\overline{G\times\mathcal{S}}$. We also note that $G$ acts in a Hamiltonian fashion on $G\times\mathcal{S}$ (resp. $\overline{G\times\mathcal{S}}$)
via \eqref{Equation: Action definition} (resp. \eqref{Equation: Log action}) as the subgroup $G=G\times\{e\}\subseteq G\times G$. The maps $\mu_L:T^*G\longrightarrow\g$ and $\overline{\mu}_L:T^*\overline{G}(\log D)\longrightarrow\g$ then restrict to moment maps
$$\mu_{\mathcal{S}}:=\mu_L\bigg\vert_{G\times\mathcal{S}}:G\times\mathcal{S}\longrightarrow\g\quad\text{and}\quad\overline{\mu}_{\mathcal{S}}:=\overline{\mu}_L\bigg\vert_{\overline{G\times\mathcal{S}}}:\overline{G\times\mathcal{S}}\longrightarrow\g,$$ and we have a commutative diagram
\begin{equation}\begin{tikzcd}\label{Equation: Equivariant diagram}
G\times\mathcal{S} \arrow{rr}{\Psi\big\vert_{G\times\mathcal{S}}} \arrow[swap]{dr}{\mu_{\mathcal{S}}}& & \overline{G\times\mathcal{S}} \arrow{dl}{\overline{\mu}_{\mathcal{S}}}\\
& \mathfrak{g} & 
\end{tikzcd}.\end{equation}

An explicit description of $\overline{G\times\mathcal{S}}$ is obtained as follows. One begins by noting that $$\overline{G\times\mathcal{S}}=\overline{\mu}_R^{-1}(\mathcal S) = \{(\gamma,(x,y))\in\overline{G}\times(\g\oplus\g)\colon (x,y)\in \gamma\text{ and }y\in\mathcal S\}.$$ On the other hand, recall the adjoint quotient $$\chi:\g\longrightarrow\mathrm{Spec}(\mathbb{C}[\g]^G)$$ and the associated concepts discussed in Section \ref{Subsection: Lie-theoretic conventions}. The image of $\overline{\mu}$ is known to be
$$\{(x,y)\in\g\oplus\g\colon \chi(x) = \chi(y)\}$$
(see \cite[Proposition 3.4]{BalibanuUniversal}), and this implies the simplified description
\begin{equation}\label{Equation: mu2Stau}
\overline{G\times\mathcal{S}} = \{(\gamma,(x,x_{\mathcal{S}})):\gamma\in\overline{G},\text{ }x\in\g,\text{ and } (x,x_{\mathcal{S}})\in \gamma\}.
\end{equation}

\begin{rem}\label{Remark: Projective fibres}
Recall the bundle projection map $\pi:T^*\overline{G}(\log D)\longrightarrow\overline{G}$, and set
$$\pi_{\mathcal{S}}:=\pi\big\vert_{\overline{G\times\mathcal{S}}}:\overline{G\times\mathcal{S}}\longrightarrow\overline{G}.$$
The description \eqref{Equation: mu2Stau} implies that the product map
$$(\pi_{\mathcal{S}},\overline{\mu}_{\mathcal{S}}):\overline{G\times\mathcal{S}}\longrightarrow\overline{G}\times\g,\quad(\gamma,(x,x_{\mathcal{S}}))\mapsto (\gamma,x),\quad (\gamma,(x,x_{\mathcal{S}}))\in\overline{G\times\mathcal{S}}$$ is a closed embedding. We thereby obtain a commutative diagram
$$\begin{tikzcd}
\overline{G\times\mathcal{S}} \arrow{rr} \arrow[swap]{dr}{\overline{\mu}_{\mathcal{S}}}& & \overline{G}\times\g \arrow{dl}\\
& \mathfrak{g} &  
,\end{tikzcd}$$
where $\overline{G}\times\g\longrightarrow\g$ is projection to the second factor. We conclude that $\overline{\mu}_{\mathcal{S}}$ has projective fibres, so that \eqref{Equation: Equivariant diagram} realizes $\overline{\mu}_{\mathcal{S}}$ as a fibrewise compactification of $\mu_{\mathcal{S}}$. It also follows that 
$$\overline{\mu}_{\mathcal{S}}^{-1}(x)\longrightarrow\{\gamma\in\overline{G}:(x,x_{\mathcal{S}})\in\gamma\},\quad (\gamma,(x,x_{\mathcal{S}}))\mapsto\gamma,\quad (\gamma,(x,x_{\mathcal{S}}))\in\overline{\mu}_{\mathcal{S}}^{-1}(x)$$ is a variety isomorphism for each $x\in\g$.  
\end{rem}

\section{Relation to the standard family of Hessenberg varieties}\label{Section: Relation}
\subsection{The standard family of Hessenberg varieties}\label{Subsection: The standard family}
Recall the notation and conventions established in Section \ref{Subsection: Lie-theoretic conventions}, and let $B\subseteq G$ be the Borel subgroup with Lie algebra $\mathfrak{b}$. Suppose that $H\subseteq\g$ is a \textit{Hessenberg subspace}, i.e. a $B$-invariant subspace of $\g$ that contains $\mathfrak{b}$. Let $G\times B$ act on $G\times H$ via
$$(g,b)\cdot(h,x):=(ghb^{-1},\mathrm{Ad}_b(x)),\quad (g,b)\in G\times B,\text{ }(h,x)\in G\times H,$$ and consider the resulting smooth $G$-variety
$$G\times_BH:=(G\times H)/B.$$ 
Write $[g:x]$ for the equivalence class of $(g,x)\in G\times H$ in $G\times_B H$. The variety $G\times_B H$ is naturally Poisson, and its $G$-action is Hamiltonian with moment map
$$\nu_H:G\times_B H\longrightarrow\g,\quad [g:x]\mapsto\mathrm{Ad}_g(x),\quad [g:x]\in G\times_B H$$ (see \cite[ Section 4]{BalibanuUniversal}). Given any $x\in\g$, one writes
$$\mathrm{Hess}(x,H):=\nu_H^{-1}(x)$$ and calls this fibre the \textit{Hessenberg variety} associated to $H$ and $x$. The Poisson moment map $\nu_H$ is thereby called the \textit{family of Hessenberg varieties} associated to $H$.

\begin{rem}\label{Remark: Closed embedding}
Note that $G\times_B H$ is the total space of a $G$-equivariant vector bundle over $G/B$, and that the bundle projection map is
\begin{equation}\label{Equation: Bundle projection}\pi_H:G\times_BH\longrightarrow G/B,\quad [g:x]\mapsto[g],\quad [g:x]\in G\times_B H.\end{equation} 
The map $$(\pi_H,\nu_H):G\times_B H\longrightarrow G/B\times\g$$ is then a closed embedding. We also have a commutative diagram \begin{equation}\label{Equation: Hessenberg diagram}\begin{tikzcd}
G\times_B H \arrow{rr}{(\pi_H,\nu_H)} \arrow[swap]{dr}{\nu_H}& & G/B\times\g \arrow{dl}\\
& \mathfrak{g} &  
,\end{tikzcd}
\end{equation}
where $G/B\times\g\longrightarrow\g$ is projection to the second factor.
It follows that the fibres of $\nu_H$ are projective, and that $\pi_H$ restricts to a closed embedding $$\mathrm{Hess}(x,H)\hookrightarrow G/B$$ for each $x\in\g$. One may thereby regard Hessenberg varieties as closed subvarieties of $G/B$.
\end{rem}

In what follows, we restrict our attention to the so-called \textit{standard family} of Hessenberg varieties. This is defined to be the family $$\nu:=\nu_{\mathfrak{m}}:G\times_B\mathfrak{m}\longrightarrow\g$$ associated to the \textit{standard Hessenberg subspace}
\begin{equation}\label{Equation: StandardHessenbergSubspace}
\mathfrak{m}:=[\mathfrak{u},\mathfrak{u}]^{\perp}=\mathfrak{b}\oplus\bigoplus_{\alpha\in\Pi}\mathfrak{g}_{-\alpha}.
\end{equation}
 To study this family in more detail, we note the following consequences of the setup in Section \ref{Subsection: Lie-theoretic conventions}:
$$\xi=\sum_{\alpha\in\Pi}e_{-\alpha}\quad\text{and}\quad \g_{\eta}\subseteq\mathfrak{u},$$
where $$e_{-\alpha}\in\mathfrak{g}_{-\alpha}\setminus\{0\}$$ for all $\alpha\in\Pi$. These considerations imply that
$\mathcal{S}\subseteq\mathfrak{m}$, allowing one to define the map
$$\rho:G\times\mathcal{S}\longrightarrow G\times_B\mathfrak{m},\quad (g,x)\mapsto[g:x],\quad (g,x)\in G\times\mathcal{S}.$$ Let us also consider the $G$-invariant open dense subvariety
$$G\times_B\mathfrak{m}^{\times}\subseteq G\times_B\mathfrak{m},$$ where $\mathfrak{m}^{\times}\subseteq\mathfrak{m}$ is the $B$-invariant open subvariety defined by
$$\mathfrak{m}^{\times}:=\mathfrak{b}+\sum_{\alpha\in\Pi}(\mathfrak{g}_{-\alpha}\setminus\{0\}):=\bigg\{x+\sum_{\alpha\in\Pi}c_{\alpha}e_{-\alpha}:x\in\mathfrak{b}\text{ and }c_{\alpha}\in\mathbb{C}^{\times}\text{ for all }\alpha\in\Pi\bigg\}.$$ One then has the following consequence of \cite[Theorem 41]{AbeCrooks} and \cite[Section 4.2 and Theorem 4.16]{BalibanuUniversal}.

\begin{prop}\label{Proposition: Slice log symplectic}
The Poisson variety $G\times_B\mathfrak{m}$ is log symplectic with $G\times_B\mathfrak{m}^{\times}$ as its unique open dense symplectic leaf. The map $\rho$ is a $G$-invariant symplectomorphism onto the leaf $G\times_B\mathfrak{m}^{\times}$.  
\end{prop}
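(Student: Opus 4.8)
The plan is to obtain the proposition by combining the two cited results, the remaining work being to reconcile their formulations with the notation above. From \cite[Section 4.2 and Theorem 4.16]{BalibanuUniversal} I take the input that the natural Poisson structure on $G\times_B\mathfrak{m}$ is log symplectic; in particular $G\times_B\mathfrak{m}$ has a unique open dense symplectic leaf $L$, and its complement is the normal crossing divisor of the structure. It then remains to identify $L$ with $G\times_B\mathfrak{m}^{\times}$ and to see that $\rho$ is a symplectomorphism onto $L$.

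I would first check that $\rho$ factors through $G\times_B\mathfrak{m}^{\times}$ and induces an isomorphism of varieties onto it. The inclusion $\mathcal{S}\subseteq\mathfrak{m}^{\times}$ is immediate: by the displayed formulas $\xi=\sum_{\alpha\in\Pi}e_{-\alpha}$ and $\mathfrak{g}_{\eta}\subseteq\mathfrak{u}\subseteq\mathfrak{b}$, every element of $\mathcal{S}$ has all of its simple-root components equal to $1$, and since $\mathfrak{m}^{\times}$ is $B$-stable the image of $\rho$ lies in $G\times_B\mathfrak{m}^{\times}$. The induced morphism $G\times\mathcal{S}\longrightarrow G\times_B\mathfrak{m}^{\times}$ is one of smooth irreducible varieties of the common dimension $\dim G+\ell$, so it suffices to prove it bijective; unwinding the quotient, this is equivalent to bijectivity of the adjoint-action map $B\times\mathcal{S}\longrightarrow\mathfrak{m}^{\times}$, $(b,x)\mapsto\Ad_b(x)$. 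Injectivity reduces --- via Kostant's theorem that $\mathcal{S}$ is a fundamental domain for the adjoint action on $\mathfrak{g}^{\mathrm{r}}$ --- to the claim that $G_x\cap B=\{e\}$ for all $x\in\mathcal{S}$; this holds at the vertex $x=\xi$ because $\xi$ is a regular nilpotent lying in $\mathfrak{u}^{-}$, so that $G_{\xi}$ is connected unipotent with $\mathfrak{g}_{\xi}\subseteq\mathfrak{u}^{-}$, whence $G_{\xi}\subseteq U^{-}$ and $G_{\xi}\cap B\subseteq U^{-}\cap B=\{e\}$, and it propagates to all of $\mathcal{S}$ by the contracting $\mathbb{C}^{\times}$-action (Kazhdan grading) attached to the $\mathfrak{sl}_2$-triple, which preserves $B$ and retracts $\mathcal{S}$ onto $\xi$. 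Surjectivity --- that every element of $\mathfrak{m}^{\times}$ is $B$-conjugate into $\mathcal{S}$ --- is the normal-form statement recorded in \cite[Theorem 41]{AbeCrooks}, which I would invoke directly; indeed that theorem already asserts that $\rho$ is an isomorphism onto $G\times_B\mathfrak{m}^{\times}$.

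It remains to match the symplectic geometry. By \cite[Theorem 41]{AbeCrooks} the isomorphism $\rho\colon G\times\mathcal{S}\longrightarrow G\times_B\mathfrak{m}^{\times}$ intertwines the symplectic form on $G\times\mathcal{S}$ with the Poisson structure on $G\times_B\mathfrak{m}^{\times}$, so the latter is symplectic; being open, dense, and of full Poisson rank, $G\times_B\mathfrak{m}^{\times}$ is therefore contained in $L$. For the reverse inclusion I would note that $(G\times_B\mathfrak{m})\setminus(G\times_B\mathfrak{m}^{\times})$ is the union of the $\ell$ irreducible, $B$-stable, codimension-one subvarieties $G\times_B\{x\in\mathfrak{m}\colon c_{\alpha}(x)=0\}$ indexed by $\alpha\in\Pi$; since the degeneracy divisor $(G\times_B\mathfrak{m})\setminus L$ is pure of codimension one and contained in this union, it must exhaust it once it is seen to meet each of the $\ell$ components, which is a pointwise rank computation that can be read off from B\u{a}libanu's explicit Poisson bivector. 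Alternatively, and more directly, the equality $L=G\times_B\mathfrak{m}^{\times}$ together with the assertion that $\rho$ is a symplectomorphism onto $L$ is itself \cite[Theorem 41]{AbeCrooks}, so once the log symplectic structure is in hand one may simply quote it; the $G$-equivariance of $\rho$ is in any case clear from the defining formulas of the two $G$-actions.

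I expect that the only genuine obstacle, were one to prove the proposition without appeal to \cite[Theorem 41]{AbeCrooks}, would be the normal-form equality $B\cdot\mathcal{S}=\mathfrak{m}^{\times}$ and the precise identification of the degeneracy locus of the Poisson bivector with $\bigcup_{\alpha\in\Pi}G\times_B\{x\in\mathfrak{m}\colon c_{\alpha}(x)=0\}$; the former rests on the structure theory of \cite{AbeCrooks} and the latter on B\u{a}libanu's explicit computation, while everything else above is formal. Accordingly the cleanest write-up presents the proposition as a corollary of the two cited theorems, the discussion above serving only to make the translation between their statements and the present one.
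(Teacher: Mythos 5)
Your proposal is correct and follows the same route as the paper, which states this proposition as a direct consequence of \cite[Theorem 41]{AbeCrooks} and \cite[Section 4.2 and Theorem 4.16]{BalibanuUniversal} without further argument. The additional reconciliation you supply (the inclusion $\mathcal{S}\subseteq\mathfrak{m}^{\times}$, the bijectivity of $B\times\mathcal{S}\longrightarrow\mathfrak{m}^{\times}$, and the identification of the degeneracy divisor) is consistent with those references and merely makes explicit what the paper leaves implicit.
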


\subsection{Some toric geometry} 
This section uses the techniques of toric geometry to compare the fibres  $\nu^{-1}(x)=\mathrm{Hess}(x,\mathfrak{m})$ and $\overline{\mu}_{\mathcal{S}}^{-1}(x)$ over regular semisimple elements $x\in\mathfrak{g}^{\text{rs}}$. Many of the underlying ideas appear in \cite{BalibanuUniversal} and \cite{Evens}. We therefore do not regard this section as containing any original material.    

We begin by observing that $G_x$ acts on the fibres of $\nu$ and $\overline{\mu}_{\mathcal{S}}$ over $x$ for all $x\in\g$. This leads to the following two lemmas, parts of which are well-known. To this end, recall the notation and discussion from Section \ref{Subsection: Lie-theoretic conventions}. 

\begin{lem}\label{Lemma: Unique open dense}
Suppose that $x\in\mathfrak{g}^{\emph{r}}$. The following statements hold.
\begin{itemize}
\item[(i)] There is a unique open dense orbit of $G_x$ in $\mathrm{Hess}(x,\mathfrak{m})$.
\item[(ii)] The group $G_x$ acts freely on the above-mentioned orbit.
\item[(iii)] If $g\in G$ satisfies $x=\mathrm{Ad}_g(x_{\mathcal{S}})$, then $[g:x_{\mathcal{S}}]$ belongs to the above-mentioned $G_x$-orbit.
\item[(iv)] If $x\in\mathfrak{g}^{\emph{rs}}$, then $\mathrm{Hess}(x,\mathfrak{m})$ is a smooth, projective, toric $G_x$-variety. 
\end{itemize}
\end{lem}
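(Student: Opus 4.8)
The plan is to prove Lemma \ref{Lemma: Unique open dense} for $x \in \mathfrak{g}^{\mathrm{r}}$ by first reducing to the case of the Kostant section point $x_{\mathcal S}$, since $G$-conjugation carries $\mathrm{Hess}(x, \mathfrak{m})$ equivariantly onto $\mathrm{Hess}(\mathrm{Ad}_g(x),\mathfrak m)$ and identifies the respective $G_x$- and $G_{\mathrm{Ad}_g(x)}$-actions. So assume $x = x_{\mathcal S} \in \mathcal S$. For part (iii), I would observe that $\rho(g, x_{\mathcal S}) = [g : x_{\mathcal S}] \in \mathrm{Hess}(x, \mathfrak m)$ whenever $\mathrm{Ad}_g(x_{\mathcal S}) = x$, i.e. whenever $g \in G_x$ (taking $x = x_{\mathcal S}$, so $g \in G_{x_{\mathcal S}}$). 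Consider the orbit $\mathcal{O} := G_x \cdot [e : x_{\mathcal S}] = \{[g : x_{\mathcal S}] : g \in G_x\} \subseteq \mathrm{Hess}(x,\mathfrak m)$. Since $\mathcal S \subseteq \mathfrak m^\times$ (as $\xi = \sum_{\alpha \in \Pi} e_{-\alpha}$ has all coordinates $c_\alpha = 1 \neq 0$), the point $[e : x_{\mathcal S}]$ lies in $G \times_B \mathfrak m^\times$, so $\mathcal O \subseteq \mathrm{Hess}(x, \mathfrak m) \cap (G\times_B \mathfrak m^\times)$, which is the symplectic leaf by Proposition \ref{Proposition: Slice log symplectic}.

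For parts (i) and (ii), I would identify $\mathcal O$ with $G_x$ itself. The map $G_x \to G\times_B \mathfrak m$, $g \mapsto [g : x_{\mathcal S}]$, is $G_x$-equivariant (for left translation on $G_x$); its fibre over $[e : x_{\mathcal S}]$ consists of those $g \in G_x$ with $[g : x_{\mathcal S}] = [e : x_{\mathcal S}]$, i.e. $g \in B$ and $\mathrm{Ad}_{g^{-1}}(x_{\mathcal S}) = x_{\mathcal S}$; the first condition already forces $\mathrm{Ad}_{g^{-1}}(x_{\mathcal S})$ to have the same $\mathfrak m/\mathfrak b$-component only if $g$ acts trivially on the leading term $\xi$, and since $\rho$ is injective (it is a symplectomorphism onto its image by Proposition \ref{Proposition: Slice log symplectic}, hence in particular injective on $G \times \mathcal S$), we get $g = e$. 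Thus $G_x$ acts freely on $\mathcal O$ and $\mathcal O \cong G_x$ as a $G_x$-variety. Since $x$ is regular, $\dim G_x = \ell = \dim \mathfrak{g}_x$, and a standard dimension count gives $\dim \mathrm{Hess}(x,\mathfrak m) = \ell$ (e.g. from $\dim(G\times_B\mathfrak m) = \dim \mathfrak g + \ell$ and $\nu$ being dominant with $\ell$-dimensional generic fibres, or directly: the symplectic leaf $G\times_B\mathfrak m^\times \cong G\times\mathcal S$ has dimension $2\ell + (\dim \mathfrak g - \ell)$... ). Hence $\mathcal O$ is an $\ell$-dimensional locally closed subvariety of the projective variety $\mathrm{Hess}(x,\mathfrak m)$; it remains to check $\mathrm{Hess}(x,\mathfrak m)$ is irreducible of dimension $\ell$, so that $\mathcal O$ — being locally closed, irreducible, and of full dimension — is open and dense. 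Uniqueness of the open dense orbit is then automatic.

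For part (iv), assume $x \in \mathfrak g^{\mathrm{rs}}$; then $x_{\mathcal S}$ is regular semisimple, $G_x = G_{x_{\mathcal S}}$ is a maximal torus $T_x$, and $\mathrm{Hess}(x, \mathfrak m)$ is a projective $T_x$-variety containing $\mathcal O \cong T_x$ as an open dense orbit on which $T_x$ acts freely — this is precisely the definition of a toric variety. Smoothness is the remaining point: $\mathrm{Hess}(x, \mathfrak m)$ is smooth when $x$ is regular semisimple because $\nu$ is a submersion along $\nu^{-1}(\mathfrak g^{\mathrm{rs}})$; I would either cite this from \cite{AbeCrooks} / \cite{BalibanuUniversal}, or argue directly via the closed embedding $\mathrm{Hess}(x,\mathfrak m) \hookrightarrow G/B$ of Remark \ref{Remark: Closed embedding}, identifying $\mathrm{Hess}(x, \mathfrak m)$ with $\{gB : \mathrm{Ad}_{g^{-1}}(x) \in \mathfrak m\}$ and computing tangent spaces using the weight decomposition of $\mathfrak g$ under $T_x$. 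The main obstacle I anticipate is the irreducibility-and-dimension input for $\mathrm{Hess}(x,\mathfrak m)$ when $x$ is merely regular (not semisimple): one does not have toric geometry available there, so one must instead run the dimension count through $\nu$ carefully — knowing $\nu$ is flat, or that $G \times_B \mathfrak m$ is irreducible of the expected dimension with $\nu$ dominant, forces the generic fibre to have dimension $\ell$, and then a semicontinuity argument bounds $\dim \mathrm{Hess}(x, \mathfrak m) \leq \ell$ for all regular $x$; combined with $\dim \mathcal O = \ell$ this pins things down, but irreducibility of the specific fibre $\mathrm{Hess}(x,\mathfrak m)$ for all regular $x$ may genuinely require the known structural results on these Hessenberg varieties rather than a soft argument.
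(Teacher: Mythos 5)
Your proposal is correct and follows essentially the same route as the paper: exhibit $[g:x_{\mathcal S}]$ as a point of $\mathrm{Hess}(x,\mathfrak m)$ with trivial $G_x$-stabilizer (your appeal to the injectivity of $\rho$ from Proposition \ref{Proposition: Slice log symplectic} is equivalent to the paper's appeal to the triviality of $B$-stabilizers of points of $\mathcal S$), then compare $\dim G_x=\ell$ with the dimension of the fibre, and quote the toric-variety structure for (iv). The one input you flag but do not supply --- irreducibility and $\ell$-dimensionality of $\mathrm{Hess}(x,\mathfrak m)$ for all regular $x$ --- is indeed handled in the paper by citation (Corollaries 3 and 14 of \cite{PrecupTransformation}), just as (iv) is handled by citing \cite{DeMari}, so no soft argument is needed or attempted there either.
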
 
 
\begin{proof}
Let $g\in G$ be such that $x=\mathrm{Ad}_g(x_{\mathcal{S}})$. Our first observation is that $$\nu([g:x_{\mathcal{S}}])=\mathrm{Ad}_{g}(x_{\mathcal{S}})=x,$$ i.e. $[g:x_{\mathcal{S}}]\in\mathrm{Hess}(x,\mathfrak{m})$. At the same time, Corollaries 3 and 14 in \cite{PrecupTransformation} imply that $\mathrm{Hess}(x,\mathfrak{m})$ is irreducible and $\ell$-dimensional. Claims (i), (ii), and (iii) would therefore follow from our showing the $G_x$-stabilizer of $[g:x_{\mathcal{S}}]$ to be trivial.

Suppose that $h\in G_x$ is such that $[hg:x_{\mathcal{S}}]=[g:x_{\mathcal{S}}]$. It follows that $(hgb^{-1},\mathrm{Ad}_b(x_{\mathcal{S}}))=(g,x_{\mathcal{S}})$ for some $b\in B$. Since the $B$-stabilizer of every point in $\mathcal{S}$ is trivial (see \cite[Lemma 4.9]{BalibanuUniversal}), we must have $b=e$. This yields the identity $hg=g$, or equivalently $h=e$. In light of the conclusion reached in the previous paragraph, Claims (i), (ii), and (iii) hold.

Claim (iv) is well-known and follows from Theorems 6 and 11 in \cite{DeMari}.
\end{proof}

Let us also recall the notation $\gamma_g$ defined in Section \ref{Subsection: Wonderful}. 

\begin{lem}\label{Lemma: Slice lemma}
Suppose that $x\in\mathfrak{g}^{\emph{r}}$. The following statements hold.
\begin{itemize}
\item[(i)] There is a unique open dense orbit of $G_x$ in $\overline{\mu}_{\mathcal{S}}^{-1}(x)$.
\item[(ii)] The group $G_x$ acts freely on the above-mentioned orbit.
\item[(iii)] If $g\in G$ satisfies $x=\mathrm{Ad}_g(x_{\mathcal{S}})$, then $(\gamma_g,(x,x_{\mathcal{S}}))$ belongs to the above-mentioned $G_x$-orbit.
\item[(iv)] If $x\in\mathfrak{g}^{\emph{rs}}$, then $\overline{\mu}_{\mathcal{S}}^{-1}(x)$ is a smooth, projective, toric $G_x$-variety.  
\end{itemize}
\end{lem}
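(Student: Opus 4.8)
The plan is to mirror the structure of the proof of Lemma \ref{Lemma: Unique open dense}, but now inside the log cotangent bundle picture rather than the Hessenberg bundle. First I would fix $g\in G$ with $x=\Ad_g(x_{\mathcal{S}})$ and check that $(\gamma_g,(x,x_{\mathcal{S}}))$ genuinely lies in $\overline{\mu}_{\mathcal{S}}^{-1}(x)$: indeed $(x,x_{\mathcal{S}})=(\Ad_g(x_{\mathcal{S}}),x_{\mathcal{S}})\in\gamma_g$ by the very definition of $\gamma_g$ in \eqref{Equation: Def of gamma}, and $\overline{\mu}_{\mathcal{S}}(\gamma_g,(x,x_{\mathcal{S}}))=x$. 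This point lies in the open subvariety $T^*G\subseteq T^*\overline{G}(\log D)$ via the embedding $\Psi$ of \eqref{Equation: Open embedding}, so it sits in the open dense symplectic leaf $G\times\mathcal{S}$ of $\overline{G\times\mathcal{S}}$. The next step is to compute the $G_x$-stabilizer of $(\gamma_g,(x,x_{\mathcal{S}}))$. Using the $G$-action \eqref{Equation: Log action} restricted to $G=G\times\{e\}$, an element $h\in G_x$ fixes the point iff $h\gamma_g=\gamma_g$ and $\Ad_h(x)=x$; the second condition is automatic, and $h\gamma_g=\gamma_g$ unwinds (via the identification $G\hookrightarrow\overline{G}$ and the left $G$-action on $\overline{G}$) to $hg=g$ in $G$, hence $h=e$. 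So $G_x$ acts freely at this point, giving a locally closed orbit $G_x\cdot(\gamma_g,(x,x_{\mathcal{S}}))$ of dimension $\dim G_x=\ell$ (using $x\in\g^{\mathrm{r}}$).

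To obtain (i), (ii), (iii) it then suffices to know that $\overline{\mu}_{\mathcal{S}}^{-1}(x)$ is irreducible of dimension $\ell$; density and uniqueness of the open orbit follow formally once an $\ell$-dimensional free orbit is exhibited inside an irreducible $\ell$-dimensional variety. For dimension and irreducibility I would invoke Remark \ref{Remark: Projective fibres}, which identifies $\overline{\mu}_{\mathcal{S}}^{-1}(x)$ with $\{\gamma\in\overline{G}:(x,x_{\mathcal{S}})\in\gamma\}$, together with the fact that the analogous fibre of $\mu_{\mathcal{S}}$ (equivalently of $\nu$, via $\rho$ of Proposition \ref{Proposition: Slice log symplectic} and Proposition \ref{Proposition: Utility}) is the irreducible $\ell$-dimensional variety $\mathrm{Hess}(x,\mathfrak{m})$ from Lemma \ref{Lemma: Unique open dense}; since $G\times\mathcal{S}\hookrightarrow\overline{G\times\mathcal{S}}$ is an open dense embedding compatible with the moment maps via \eqref{Equation: Equivariant diagram}, the fibre $\mu_{\mathcal{S}}^{-1}(x)\cong\mathrm{Hess}(x,\mathfrak{m})\cap(G\times_B\mathfrak{m}^{\times})$ is open dense in $\overline{\mu}_{\mathcal{S}}^{-1}(x)$, which forces the latter to be irreducible of the same dimension $\ell$ (the fibre being projective by Remark \ref{Remark: Projective fibres}).

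For (iv), assume $x\in\g^{\mathrm{rs}}$, so that $G_x=\tilde T$ is a maximal torus of dimension $\ell$. By (i)--(iii), $\overline{\mu}_{\mathcal{S}}^{-1}(x)$ is an irreducible $\ell$-dimensional variety with a free open dense $\tilde T$-orbit, hence a toric $\tilde T$-variety; projectivity was noted above. Smoothness is the one point that does not come for free from the orbit count: I would deduce it from the identification of $\overline{\mu}_{\mathcal{S}}^{-1}(x)$ with $\{\gamma\in\overline{G}:(x,x_{\mathcal{S}})\in\gamma\}$ in Remark \ref{Remark: Projective fibres}, realizing this as the closure in $\overline{G}$ of the $\tilde T$-orbit through $\gamma_g$ (here $x=x_{\mathcal{S}}$ regular semisimple means $G_x$ is a maximal torus conjugate to $\tilde T$), and then using the explicit toric/wonderful-compactification description: the closure of a generic torus orbit in $\overline{G}$ is the smooth projective toric variety associated to the fan of Weyl chambers, as in Evens--Jones / De Concini--Procesi (cf.\ \cite{Evens}, \cite{DeMari}). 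This is essentially the same toric variety appearing in Lemma \ref{Lemma: Unique open dense}(iv), so one could alternatively quote De Mari--Procesi--Shayman via \cite{DeMari} directly. The main obstacle I anticipate is precisely this last point: cleanly matching the $\tilde T$-orbit closure in $\overline{G}$ with the fibre $\{\gamma:(x,x_{\mathcal{S}})\in\gamma\}$ and importing smoothness from the known structure of $\overline{G}$, rather than re-deriving it, so that the argument stays short and does not duplicate the toric computations the authors say they wish to avoid.
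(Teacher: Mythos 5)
Your stabilizer computation is correct and is exactly the ``straightforward calculation'' the paper leaves to the reader: $(h,e)\cdot\gamma_g=\gamma_{hg}$, and injectivity of $\kappa$ forces $h=e$. The formal step ``free $\ell$-dimensional orbit inside an irreducible $\ell$-dimensional variety $\Rightarrow$ unique open dense orbit'' is also fine. The problem is how you establish that $\overline{\mu}_{\mathcal{S}}^{-1}(x)$ is irreducible of dimension $\ell$. You assert that $\mu_{\mathcal{S}}^{-1}(x)=\overline{\mu}_{\mathcal{S}}^{-1}(x)\cap(G\times\mathcal{S})$ is open \emph{dense} in $\overline{\mu}_{\mathcal{S}}^{-1}(x)$ because $G\times\mathcal{S}$ is open dense in $\overline{G\times\mathcal{S}}$. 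Openness is automatic; density is not. A priori the fibre could have irreducible components contained entirely in the boundary $\pi^{-1}(D)\cap\overline{G\times\mathcal{S}}$, and the generic fibre dimension $\ell$ only gives a lower bound $\dim\overline{\mu}_{\mathcal{S}}^{-1}(x)\geq\ell$ at special $x$, not an upper bound. Controlling precisely this kind of boundary behaviour is nontrivial --- it is the content of the codimension estimates in Lemmas \ref{Lemma: First codimension}--\ref{Lemma: Next extension} later in the paper --- so it cannot be waved through. The paper closes this gap by reducing to $x=x_{\mathcal{S}}$ via $G$-equivariance and citing \cite[Corollary 3.12]{BalibanuUniversal}, which identifies $\overline{\mu}_{\mathcal{S}}^{-1}(x_{\mathcal{S}})$ with the closure $\overline{G_{x_{\mathcal{S}}}}$ in $\overline{G}$; irreducibility and the dimension count then come from $G_{x_{\mathcal{S}}}$ being connected of dimension $\ell$.

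The same missing identification resurfaces in your part (iv): you want to realize $\{\gamma\in\overline{G}:(x,x_{\mathcal{S}})\in\gamma\}$ as the closure of a maximal-torus orbit in $\overline{G}$ and then quote smoothness of generic torus-orbit closures (\cite[Remark 4.5]{Evens}, as the paper does). The containment $\overline{G_{x}}\subseteq\{\gamma:(x,x)\in\gamma\}$ is easy since the defining condition is closed and holds on $G_x$; the reverse inclusion is the substantive claim, and it is again exactly \cite[Corollary 3.12]{BalibanuUniversal}. You flag this as ``the main obstacle I anticipate,'' which is the right diagnosis, but as written the proof is incomplete at that point: you should either cite B\u{a}libanu's result or prove the reverse inclusion (e.g.\ by an argument in the spirit of Lemma \ref{Lemma: PoissonSlicefixedpoints}, analyzing which boundary points $(g_1,g_2)\cdot\p_I\times_{\lf_I}\p_I^-$ can contain $(x,x)$). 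Once that single input is in place, the rest of your argument goes through and coincides with the paper's.
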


\begin{proof}
The moment map $\overline{\mu}_{\mathcal{S}}$ is $G$-equivariant, so that acting by the element $g^{-1}$ defines a variety isomorphism
\begin{equation}\label{Equation: Prelim iso}\overline{\mu}_{\mathcal{S}}^{-1}(x)\overset{\cong}\longrightarrow\overline{\mu}_{\mathcal{S}}^{-1}(x_{\mathcal{S}}).
\end{equation} 
Since the latter variety is given by
\begin{equation}\label{Equation: Mild}\overline{\mu}_{\mathcal{S}}^{-1}(x_{\mathcal{S}})=\{(\gamma,(x_{\mathcal{S}},x_{\mathcal{S}})):\gamma\in\overline{G}\text{ and }(x_{\mathcal{S}},x_{\mathcal{S}})\in\gamma\},
\end{equation}
we can use \cite[Corollary 3.12]{BalibanuUniversal} and conclude that $\overline{\mu}_{\mathcal{S}}^{-1}(x_{\mathcal{S}})$ is irreducible and $\ell$-dimensional. It follows that $\overline{\mu}_{\mathcal{S}}^{-1}(x)$ is irreducible and $\ell$-dimensional. As with the proof of our previous lemma, Claims (i), (ii), and (ii) would now follow from knowing $(\gamma_g,(x,x_{\mathcal{S}}))$ to have a trivial $G_x$-stabilizer. This is established via a straightforward calculation.

To verify Claim (iv), recall the isomorphism \eqref{Equation: Prelim iso}. This isomorphism implies that $\overline{\mu}_{\mathcal{S}}^{-1}(x)$ is a smooth, projective, toric $G_x$-variety if and only if $\overline{\mu}_{\mathcal{S}}^{-1}(x_{\mathcal{S}})$ is a smooth, projective, toric $g^{-1}G_xg=G_{x_{\mathcal{S}}}$-variety. The latter condition holds because of \eqref{Equation: Mild}, \cite[Corollary 3.12]{BalibanuUniversal}, and the fact that the closure of $G_{x_{\mathcal{S}}}$ in $\overline{G}$ is a smooth, projective, toric $G_{x_{\mathcal{S}}}$-variety \cite[Remark 4.5]{Evens}. Our proof is therefore complete.
\end{proof}

Our next two lemmas study the $G_x$-fixed point sets
$$\mathrm{Hess}(x,\mathfrak{m})^{G_x}\subseteq\mathrm{Hess}(x,\mathfrak{m})\quad\text{and}\quad\overline{\mu}_{\mathcal{S}}^{-1}(x)^{G_x}\subseteq\overline{\mu}_{\mathcal{S}}^{-1}(x)$$
for each $x\in\mathfrak{g}^{\text{rs}}$. To formulate these results, let $\mathcal{B}$ denote the flag variety of all Borel subalgebras in $\g$ and set
$$\mathcal{B}_x:=\{\widetilde{\mathfrak{b}}\in\mathcal{B}:x\in \widetilde{\mathfrak{b}}\}$$ for each $x\in\g$. Recall that \begin{equation}\label{Equation: Iso}G/B\longrightarrow\mathcal{B},\quad [g]\mapsto\mathrm{Ad}_g(\mathfrak{b}),\quad [g]\in G/B\end{equation} defines a $G$-equivariant variety isomorphism. Let us also consider the map
$$\pi_{\mathfrak{m}}:G\times_B\mathfrak{m}\longrightarrow G/B,\quad [g:x]\mapsto [g],\quad [g:x]\in G\times_B\mathfrak{m}$$ and its composition with \eqref{Equation: Iso}, i.e.
$$\varpi_{\mathfrak{m}}:G\times_B\mathfrak{m}\longrightarrow\mathcal{B},\quad [g:x]\mapsto\mathrm{Ad}_g(\mathfrak{b}),\quad [g:x]\in G\times_B\mathfrak{m}.$$

\begin{lem}\label{Lem: First fixed}
If $x\in\mathfrak{g}^{\emph{rs}}$, then there is a canonical bijection 
$$\mathcal{B}_x\longrightarrow\mathrm{Hess}(x,\mathfrak{m})^{G_x},\quad \widetilde{\mathfrak{b}}\mapsto z(\widetilde{\mathfrak{b}}),\quad \widetilde{\mathfrak{b}}\in\mathcal{B}_x$$
satisfying
$$\widetilde{\mathfrak{b}}=\varpi_{\mathfrak{m}}(z(\widetilde{\mathfrak{b}}))$$
for all $\widetilde{\mathfrak{b}}\in \mathcal{B}_x$.
\end{lem}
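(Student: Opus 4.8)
The plan is to construct the map $\mathcal{B}_x \to \mathrm{Hess}(x,\mathfrak{m})^{G_x}$ directly from the geometry, using the identification $\mathrm{Hess}(x,\mathfrak{m}) \hookrightarrow G/B \cong \mathcal{B}$ from Remark \ref{Remark: Closed embedding}, and then check bijectivity. First I would observe that, since $x \in \mathfrak{g}^{\text{rs}}$, the centralizer $G_x$ is a maximal torus, and $\mathcal{B}_x$ is precisely the (finite) set of Borel subalgebras containing the regular semisimple element $x$; this is a standard fact, and $\mathcal{B}_x$ is a torsor-like finite set acted on simply transitively by $W = N_G(G_x)/G_x$. Given $\widetilde{\mathfrak{b}} \in \mathcal{B}_x$, choose $g \in G$ with $\mathrm{Ad}_g(\mathfrak{b}) = \widetilde{\mathfrak{b}}$; I would then set $z(\widetilde{\mathfrak{b}}) := [g : \mathrm{Ad}_{g^{-1}}(x)]$, noting that $\mathrm{Ad}_{g^{-1}}(x) \in \mathfrak{b}$ because $x \in \widetilde{\mathfrak{b}}$, and in fact $\mathrm{Ad}_{g^{-1}}(x) \in \mathfrak{m}$ since $\mathfrak{b} \subseteq \mathfrak{m}$. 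One checks $\nu_{\mathfrak{m}}(z(\widetilde{\mathfrak{b}})) = \mathrm{Ad}_g(\mathrm{Ad}_{g^{-1}}(x)) = x$, so $z(\widetilde{\mathfrak{b}}) \in \mathrm{Hess}(x,\mathfrak{m})$, and $\varpi_{\mathfrak{m}}(z(\widetilde{\mathfrak{b}})) = \mathrm{Ad}_g(\mathfrak{b}) = \widetilde{\mathfrak{b}}$, which also forces well-definedness (independence of the choice of $g$: any two choices differ by right multiplication by $B$, under which the class $[g:\mathrm{Ad}_{g^{-1}}(x)]$ is manifestly unchanged).

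Next I would verify that $z(\widetilde{\mathfrak{b}})$ is $G_x$-fixed. For $h \in G_x$, the $G$-action gives $h \cdot [g : \mathrm{Ad}_{g^{-1}}(x)] = [hg : \mathrm{Ad}_{g^{-1}}(x)]$, and since $\mathrm{Ad}_{(hg)^{-1}}(x) = \mathrm{Ad}_{g^{-1}}(\mathrm{Ad}_{h^{-1}}(x)) = \mathrm{Ad}_{g^{-1}}(x)$, this equals $[hg : \mathrm{Ad}_{(hg)^{-1}}(x)] = z(\mathrm{Ad}_{hg}(\mathfrak{b}))$; but $\mathrm{Ad}_{hg}(\mathfrak{b}) = \mathrm{Ad}_h(\widetilde{\mathfrak{b}}) = \widetilde{\mathfrak{b}}$ because $h \in G_x$ stabilizes every Borel containing $x$ (as $G_x$ is a torus lying in every such Borel, it normalizes them — more precisely $h$ fixes $\widetilde{\mathfrak{b}}$ since $\widetilde{\mathfrak b}$ is the unique Borel in its $G_x$-orbit on $\mathcal{B}_x$, the $G_x$-action on the finite set $\mathcal{B}_x$ being trivial). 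Hence $h \cdot z(\widetilde{\mathfrak{b}}) = z(\widetilde{\mathfrak{b}})$.

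For surjectivity and injectivity, I would pass through $\varpi_{\mathfrak{m}}$. Injectivity is immediate from the relation $\widetilde{\mathfrak{b}} = \varpi_{\mathfrak{m}}(z(\widetilde{\mathfrak{b}}))$. For surjectivity, let $z \in \mathrm{Hess}(x,\mathfrak{m})^{G_x}$ and write $z = [g : y]$ with $y \in \mathfrak{m}$ and $\mathrm{Ad}_g(y) = x$. Then $\varpi_{\mathfrak{m}}(z) = \mathrm{Ad}_g(\mathfrak{b}) =: \widetilde{\mathfrak{b}}$, and $x = \mathrm{Ad}_g(y) \in \mathrm{Ad}_g(\mathfrak{m}) \supseteq \mathrm{Ad}_g(\mathfrak{b}) = \widetilde{\mathfrak b}$ — wait, that shows $x \in \mathrm{Ad}_g(\mathfrak m)$, not $x \in \widetilde{\mathfrak b}$ directly; instead I would argue that since $z$ is $G_x$-fixed and $G_x$ acts freely on the unique open dense orbit (Lemma \ref{Lemma: Unique open dense}(ii)), $z$ does not lie in that open orbit, so $z$ lies in the complement; but more efficiently, $G_x$ acts on $\mathcal{B}_x$ and on $\mathrm{Hess}(x,\mathfrak{m})$ compatibly via $\varpi_{\mathfrak{m}}$, and I claim $\varpi_{\mathfrak{m}}(z) \in \mathcal{B}_x$: indeed the $G_x$-fixed points of $\mathcal{B}$ are exactly $\mathcal{B}_x$ (standard: the fixed locus of a maximal torus on the flag variety is its Weyl-many fixed points, each a Borel containing the torus hence containing $x$), and $z$ being $G_x$-fixed forces $\varpi_{\mathfrak{m}}(z)$ to be $G_x$-fixed, hence in $\mathcal{B}_x$. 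Setting $\widetilde{\mathfrak{b}} := \varpi_{\mathfrak{m}}(z)$, both $z$ and $z(\widetilde{\mathfrak{b}})$ lie in $\mathrm{Hess}(x,\mathfrak{m})^{G_x}$ with the same image under $\varpi_{\mathfrak{m}}$; since $\varpi_{\mathfrak{m}}$ restricted to a single fibre $\pi_{\mathfrak{m}}^{-1}(\widetilde{\mathfrak b}) \cap \mathrm{Hess}(x,\mathfrak m)$ identifies this set with $\{y \in \mathrm{Ad}_g \mathfrak m : \mathrm{Ad}_g(y) = x\}$, a single point (as $x \in \mathrm{Ad}_g\mathfrak m$ determines $y$ uniquely), we get $z = z(\widetilde{\mathfrak{b}})$.

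The main obstacle, and the point deserving the most care, is pinning down the precise structure of $\mathcal{B}_x$ and the $G_x$-action on it for $x$ regular semisimple — namely that $\mathcal{B}_x$ is a finite set on which the torus $G_x$ acts trivially, equivalently that $\mathcal{B}_x = \mathcal{B}^{G_x}$. This is classical (each $\widetilde{\mathfrak b} \in \mathcal{B}_x$ contains the Cartan $\mathfrak{g}_x = \mathrm{Lie}(G_x)$ since $\widetilde{\mathfrak b}$ is its own normalizer and contains the regular semisimple $x$ whose centralizer is $\mathfrak{g}_x$; conversely a $G_x$-fixed Borel contains $\mathfrak{g}_x \ni$ — well, contains the torus hence its Lie algebra hence $x$), but I would state it cleanly, perhaps citing a standard reference, before assembling the bijection above. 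Everything else is bookkeeping with the quotient $G \times_B \mathfrak{m}$ and the explicit formula for $\nu_{\mathfrak{m}}$.
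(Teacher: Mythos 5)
Your proposal is correct and follows essentially the same route as the paper: both arguments rest on the $G_x$-equivariant closed embedding $\varpi_{\mathfrak{m}}\vert_{\mathrm{Hess}(x,\mathfrak{m})}$ into $\mathcal{B}$ together with the standard identification $\mathcal{B}^{G_x}=\mathcal{B}_x$, and your explicit point $z(\widetilde{\mathfrak{b}})=[g:\mathrm{Ad}_{g^{-1}}(x)]$ is exactly the preimage $[g:y]$ the paper produces in its surjectivity step. The only difference is presentational: the paper defines the bijection as the inverse of the injection $\mathrm{Hess}(x,\mathfrak{m})^{G_x}\hookrightarrow\mathcal{B}_x$, whereas you write down the inverse directly and verify its properties.
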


\begin{proof}
Recall that $\pi_{\mathfrak{m}}$ restricts to a closed embedding
$$\mathrm{Hess}(x,\mathfrak{m})\longrightarrow G/B$$ (see Remark \ref{Remark: Closed embedding}).
Composing this embedding with \eqref{Equation: Iso}, we deduce that $\varpi_{\mathfrak{m}}$ restricts to a closed embedding
\begin{equation}\label{Equation: Preinjection}
\mathrm{Hess}(x,\mathfrak{m})\longrightarrow\mathcal{B}.
\end{equation} This embedding is $G_x$-equivariant, implying that it restricts to an injection
\begin{equation}\label{Equation: Injection}
\mathrm{Hess}(x,\mathfrak{m})^{G_x}\longrightarrow\mathcal{B}^{G_x}=\mathcal{B}_x.
\end{equation}

We claim that \eqref{Equation: Injection} is also surjective. To this end, suppose that $\widetilde{\mathfrak{b}}\in\mathcal{B}_x$. We then have $\widetilde{\mathfrak{b}}=\mathrm{Ad}_g(\mathfrak{b})$ for some $g\in G$. It follows that $x=\mathrm{Ad}_g(y)$ for some $y\in\mathfrak{b}$, so that we have a point $[g:y]\in\mathrm{Hess}(x,\mathfrak{m})$. The image of this point under \eqref{Equation: Preinjection} is $\widetilde{\mathfrak{b}}$. Noting that $\widetilde{\mathfrak{b}}\in\mathcal{B}_x=\mathcal{B}^{G_x}$ and that \eqref{Equation: Preinjection} is injective and $G_x$-equivariant, we conclude that $[g:y]\in\mathrm{Hess}(x,\mathfrak{m})^{G_x}$. This establishes that \eqref{Equation: Injection} is surjective, i.e. that it is a bijection. The bijection advertised in the statement of the lemma is obtained by inverting \eqref{Equation: Injection}.
\end{proof}

If $x\in\mathfrak{g}^{\text{rs}}$, then the Cartan subalgebra $\mathfrak{g}_x$ satisfies $\mathfrak{g}_x\subseteq\widetilde{\mathfrak{b}}$ for all $\widetilde{\mathfrak{b}}\in\mathcal{B}_x$ (see \cite[Lemma 3.1.4]{Chriss}). This fact gives rise to an opposite Borel subalgebra $\widetilde{\mathfrak{b}}^{-}\in\mathcal{B}_x$ for all $\widetilde{\mathfrak{b}}\in\mathcal{B}_x$. We may therefore define 
$$\theta(\widetilde{\mathfrak{b}}):=\widetilde{\mathfrak{b}}\times_{\mathfrak{g}_x}\widetilde{\mathfrak{b}}^-$$ analogously to \eqref{Equation: Basepoint}.
\begin{lem}\label{Lemma: Fixed points}
Suppose that $x\in\mathfrak{g}^{\emph{rs}}$ and let $g\in G$ be such that $x=\mathrm{Ad}_g(x_{\mathcal{S}})$. We then have a bijection defined by
$$\mathcal{B}_x\longrightarrow\overline{\mu}_{\mathcal{S}}^{-1}(x)^{G_x},\quad \widetilde{\mathfrak{b}}\mapsto \bigg((e,g^{-1})\cdot\theta(\widetilde{\mathfrak{b}}),(x,x_{\mathcal{S}})\bigg),\quad \widetilde{\mathfrak{b}}\in \mathcal{B}_x.$$
\end{lem}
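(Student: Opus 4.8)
The plan is to reduce the statement to the base point $x=x_{\mathcal{S}}$ of the Kostant section, and then to recognise the fibre in question as the closure of a maximal torus inside $\overline{G}$ — a toric variety whose torus-fixed points are classically indexed by Weyl chambers. First I would exploit the $G$-equivariance of $\overline{\mu}_{\mathcal{S}}$ for the $G=G\times\{e\}$-action on $\overline{G\times\mathcal{S}}$. Fixing $g$ with $x=\Ad_g(x_{\mathcal{S}})$, acting by $(g,e)$ is a variety isomorphism $\overline{\mu}_{\mathcal{S}}^{-1}(x_{\mathcal{S}})\overset{\cong}{\longrightarrow}\overline{\mu}_{\mathcal{S}}^{-1}(x)$ that intertwines the $G_{x_{\mathcal{S}}}$- and $G_x$-actions via conjugation by $g$ (note $gG_{x_{\mathcal{S}}}g^{-1}=G_x$), while $\Ad_{g^{-1}}$ is a bijection $\mathcal{B}_x\to\mathcal{B}_{x_{\mathcal{S}}}$. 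A short $(G\times G)$-equivariance computation in $\overline{G}$ then gives $(g,e)\cdot\theta(\Ad_{g^{-1}}(\widetilde{\mathfrak{b}}))=(e,g^{-1})\cdot\theta(\widetilde{\mathfrak{b}})$ and $(g,e)\cdot(\gamma,(x_{\mathcal{S}},x_{\mathcal{S}}))=((g,e)\cdot\gamma,(x,x_{\mathcal{S}}))$, exhibiting the map in the statement as the composite of these bijections. Hence it suffices to treat $x=x_{\mathcal{S}}$, $g=e$: show that $\widetilde{\mathfrak{b}}\mapsto(\theta(\widetilde{\mathfrak{b}}),(x_{\mathcal{S}},x_{\mathcal{S}}))$ is a bijection $\mathcal{B}_{x_{\mathcal{S}}}\to\overline{\mu}_{\mathcal{S}}^{-1}(x_{\mathcal{S}})^{G_{x_{\mathcal{S}}}}$.

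Next, writing $T:=G_{x_{\mathcal{S}}}$ (a maximal torus, since $x_{\mathcal{S}}\in\g^{\text{rs}}$), I would use Remark \ref{Remark: Projective fibres} together with \eqref{Equation: mu2Stau} to identify $\overline{\mu}_{\mathcal{S}}^{-1}(x_{\mathcal{S}})$ equivariantly with $\{\gamma\in\overline{G}:(x_{\mathcal{S}},x_{\mathcal{S}})\in\gamma\}$, on which $T$ acts by left translation. Intersecting with the open cell $\kappa(G)\subseteq\overline{G}$ shows this set meets $\kappa(G)$ in exactly $\kappa(T)$; since the fibre is irreducible by \cite[Corollary 3.12]{BalibanuUniversal}, it coincides with the closure $\overline{T}:=\overline{\kappa(T)}$, which is a smooth projective toric $T$-variety \cite[Remark 4.5]{Evens}. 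The task thereby becomes: show that $\widetilde{\mathfrak{b}}\mapsto\theta(\widetilde{\mathfrak{b}})$ is a bijection $\mathcal{B}_{x_{\mathcal{S}}}\to\overline{T}^{T}$.

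For well-definedness, given $\widetilde{\mathfrak{b}}\in\mathcal{B}_{x_{\mathcal{S}}}$ I would choose a one-parameter subgroup $\lambda:\C^{\times}\to T$ that is regular with respect to $\widetilde{\mathfrak{b}}$ and apply Lemma \ref{Lemma: 1PS} with $\tilde{T}=T$ and $\tilde{\mathfrak{t}}=\g_{x_{\mathcal{S}}}$; this yields $\theta(\widetilde{\mathfrak{b}})=\widetilde{\mathfrak{b}}\times_{\g_{x_{\mathcal{S}}}}\widetilde{\mathfrak{b}}^{-}=\lim_{t\to\infty}\gamma_{\lambda(t)}\in\overline{T}$. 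The point $\theta(\widetilde{\mathfrak{b}})$ is $T$-fixed because $\Ad_h$ (for $h\in T$) preserves $\widetilde{\mathfrak{b}}$ and $\widetilde{\mathfrak{b}}^{-}$ and is the identity on $\g_{x_{\mathcal{S}}}$, hence preserves the ``equal projection to $\g_{x_{\mathcal{S}}}$'' condition defining $\theta(\widetilde{\mathfrak{b}})$. Injectivity is immediate, since $\widetilde{\mathfrak{b}}$ is recovered from $\theta(\widetilde{\mathfrak{b}})$ as the image of its first projection $\g\oplus\g\to\g$.

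The crux is surjectivity onto $\overline{T}^{T}$, and this is where I expect the real work. Here I would invoke the orbit–cone correspondence for the complete toric variety $\overline{T}$: every $T$-fixed point equals $\lim_{t\to 0}\gamma_{\lambda(t)}$ for some one-parameter subgroup $\lambda$ of $T$, which by Lemma \ref{Lemma: 1PS} applied to $\lambda^{-1}$ equals $\p\times_{\lf}\p^{-}$ for the parabolic $\p\supseteq\g_{x_{\mathcal{S}}}$ and Levi $\lf$ determined by $\lambda^{-1}$. Running the $\Ad_h$-invariance computation in reverse shows such a point can be $T$-fixed only when $\lf=\g_{x_{\mathcal{S}}}$, i.e. only when $\p$ is a Borel subalgebra containing $\g_{x_{\mathcal{S}}}$, in which case $\p\times_{\g_{x_{\mathcal{S}}}}\p^{-}=\theta(\p)$ with $\p\in\mathcal{B}_{x_{\mathcal{S}}}$. (Alternatively one may simply quote the classical description of $\overline{T}$ as the toric variety of the fan of Weyl chambers of $(\g,\g_{x_{\mathcal{S}}})$, whose maximal cones correspond to $\mathcal{B}_{x_{\mathcal{S}}}$.) Together with the previous paragraph this gives the desired bijection. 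The main obstacle is thus the toric-geometric surjectivity step — in particular verifying that non-regular one-parameter subgroups cannot contribute new $T$-fixed points — along with the equivariance bookkeeping underlying the reduction to $x=x_{\mathcal{S}}$.
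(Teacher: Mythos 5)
Your proof is correct, and its skeleton matches the paper's: reduce to the base point $x_{\mathcal S}$ by acting with $(g,e)$, identify $\overline{\mu}_{\mathcal{S}}^{-1}(x_{\mathcal S})$ with the torus closure $\overline{T}\subseteq\overline{G}$ (the paper does this via \cite[Corollary 3.12]{BalibanuUniversal}, you via irreducibility plus the computation of the intersection with the open cell --- same content), and get well-definedness from Lemma \ref{Lemma: 1PS}. Where you genuinely diverge is the surjectivity crux, which the paper isolates as Lemma \ref{Lemma: PoissonSlicefixedpoints}: there an arbitrary $T$-fixed point is written as $(g_1,g_2)\cdot\p_I\times_{\lf_I}\p_I^-$ using the $(G\times G)$-orbit stratification, and the stabilizer formula \eqref{Equation: StabilisermI} is applied in three stages to force $g_1\in N_G(T)$, then $I=\emptyset$, then $g_2$ equal to $g_1$ up to stabilizer, whence $\gamma=\theta(\widetilde{\bb})$. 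You instead invoke the orbit--cone correspondence for the smooth complete toric variety $\overline{T}$ to realize every fixed point as $\lim_{t\to 0}\gamma_{\lambda(t)}$, feed this into Lemma \ref{Lemma: 1PS}, and exclude non-regular $\lambda$ by the fixedness computation (if $\lf\supsetneq\g_{x_{\mathcal S}}$, then for a root $\alpha$ of $\lf$ and $h\in T$ with $\alpha(h)\neq 1$, the element $(h,e)$ moves $(e_\alpha,e_\alpha)$ out of $\p\times_{\lf}\p^-$, so the subspace is not fixed). This is a valid and shorter route; its cost is that it treats the toric structure of $\overline{T}$ (\cite[Remark 4.5]{Evens}, which the paper also cites elsewhere) and the orbit--cone correspondence as black boxes, whereas the paper's computation is self-contained given the stabilizer formula and in effect reproves the description of $\overline{T}^{T}$ without presupposing knowledge of the fan. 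Your reduction bookkeeping, the identity $(g,e)\cdot\theta(\mathrm{Ad}_{g^{-1}}(\widetilde{\mathfrak{b}}))=(e,g^{-1})\cdot\theta(\widetilde{\mathfrak{b}})$, and the injectivity argument via the first projection are all correct.
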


\begin{proof}
Note that conjugation by $g^{-1}$ defines a bijection
$\phi:\mathcal{B}_x\longrightarrow\mathcal{B}_{x_{\mathcal{S}}}$.
At the same time, consider the automorphism of $\overline{G\times\mathcal{S}}$ through which the element $g^{-1}$ acts. This automorphism restricts to a bijection
$$\overline{\mu}_{\mathcal{S}}^{-1}(x)^{G_x}\overset{\cong}\longrightarrow\overline{\mu}_{\mathcal{S}}^{-1}(x_{\mathcal{S}})^{G_{x_{\mathcal{S}}}}.$$ It also sends 
$$\bigg((e,g^{-1})\cdot\theta(\widetilde{\mathfrak{b}}),(x,x_{\mathcal{S}})\bigg)\mapsto (\theta(\phi(\widetilde{\mathfrak{b}})),(x_{\mathcal{S}},x_{\mathcal{S}}))$$ for all $\widetilde{\mathfrak{b}}\in\mathcal{B}_x$, where the subspaces $\{\theta(\widetilde{\mathfrak{b}})\}_{\widetilde{\mathfrak{b}}\in\mathcal{B}_{x_{\mathcal{S}}}}$ are defined analogously to the $\{\theta(\widetilde{\mathfrak{b}})\}_{\widetilde{\mathfrak{b}}\in\mathcal{B}_x}$. It therefore suffices to prove that 
$$\mathcal{B}_{x_{\mathcal{S}}}\longrightarrow\overline{\mu}_{\mathcal{S}}^{-1}(x_{\mathcal{S}})^{G_{x_{\mathcal{S}}}},\quad \widetilde{\mathfrak{b}}\mapsto(\theta(\widetilde{\mathfrak{b}}),(x_{\mathcal{S}},x_{\mathcal{S}})),\quad \widetilde{\mathfrak{b}}\in\mathcal{B}_{x_{\mathcal{S}}}$$ defines a bijection. In other words, it suffices to prove our lemma under the assumption that $x\in\mathcal{S}\cap\mathfrak{g}^{\text{rs}}$ and $g=e$.

Let us make the assumption indicated above. The fibre $\overline{\mu}_{\mathcal{S}}^{-1}(x)$ is then given by
$$\overline{\mu}_{\mathcal{S}}^{-1}(x)=\{(\gamma,(x,x)):\gamma\in\overline{G}\text{ and }(x,x)\in\gamma\}.$$ It now follows from \cite[Corollary 3.12]{BalibanuUniversal} that
$$\overline{G_x}\longrightarrow\overline{\mu}_{\mathcal{S}}^{-1}(x),\quad\gamma\mapsto (\gamma,(x,x)),\quad \gamma\in\overline{G_x}$$ defines an isomorphism of varieties, where $\overline{G_x}$ denotes the closure of $G_x$ in $\overline{G}$. This isomorphism is $G_x$-equivariant if one lets $G_x$ act on $\overline{G_x}$ by restricting the ($G\times G$)-action on $\overline{G}$ to $G_x=G_x\times\{e\}\subseteq G\times G$. We therefore have 
\begin{equation}\label{Equation: Pre}\overline{\mu}_{\mathcal{S}}^{-1}(x)^{G_x}=\{(\gamma,(x,x)):\gamma\in(\overline{G_x})^{G_x}\}.\end{equation} At the same time, we will prove that
\begin{equation}\label{Equation: Pre2}(\overline{G_x})^{G_x}=\{\theta(\widetilde{\mathfrak{b}}):\widetilde{\mathfrak{b}}\in\mathcal{B}_x\}\end{equation}
in Lemma \ref{Lemma: PoissonSlicefixedpoints}.
Our current lemma now follows from \eqref{Equation: Pre}, \eqref{Equation: Pre2}, and the conclusion of the previous paragraph.
\end{proof}

\begin{lem}\label{Lemma: PoissonSlicefixedpoints}
We have 
$$(\overline{G_x})^{G_x}=\{\theta(\widetilde{\mathfrak{b}}):\widetilde{\mathfrak{b}}\in\mathcal{B}_x\}$$
for all $x\in\mathfrak{g}^{\emph{rs}}$, where $G_x$ acts on $\overline{G_x}$ as the subgroup $G_x=G_x\times\{e\}\subseteq G\times G$ via the ($G\times G$)-action on $\overline{G}$.
\end{lem}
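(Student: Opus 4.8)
The plan is to identify $\overline{G_x}$ explicitly and to compute its $G_x$-fixed points via a one-parameter subgroup limit argument, using Lemma~\ref{Lemma: 1PS}. Since $x\in\mathfrak{g}^{\mathrm{rs}}$, the centralizer $T:=G_x$ is a maximal torus of $G$ with Lie algebra $\mathfrak{g}_x=:\tilde{\mathfrak t}$, and the curve $t\mapsto\gamma_{t}$ for $t\in T$ lands in $\overline{G}$; taking the closure of $T=T\times\{e\}\cdot\mathfrak{g}_\Delta$ in $\overline G$ gives the toric variety $\overline{G_x}=\overline{T}$ discussed in \cite[Remark 4.5]{Evens}. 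The $T$-fixed points of $\overline T$ (for the action of $T=T\times\{e\}\subseteq G\times G$) are exactly the limits $\lim_{t\to\infty}\lambda(t)\cdot\mathfrak{g}_\Delta$ as $\lambda$ ranges over the one-parameter subgroups of $T$ lying in the interiors of the Weyl chambers, since these are the only fixed points of the torus action on the (smooth, projective) toric variety $\overline T$ and any fixed point is approached along such a curve. So the first step is to recall or establish that $(\overline{T})^{T}$ equals the set of all such limits, using standard toric geometry together with the fact that $\overline{T}$ is the toric variety associated to the fan of Weyl chambers of $(\mathfrak g,\tilde{\mathfrak t})$ (equivalently, cite \cite{Evens}).

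The second step is purely computational: apply Lemma~\ref{Lemma: 1PS} with $\tilde T=T$, $\tilde{\mathfrak t}=\mathfrak{g}_x$. Given a regular one-parameter subgroup $\lambda:\mathbb{C}^\times\to T$ (one with $(\alpha,\lambda)\neq 0$ for every root $\alpha$ of $(\mathfrak g,\mathfrak g_x)$), the associated parabolic $\mathfrak p$ of Lemma~\ref{Lemma: 1PS} is a Borel subalgebra $\widetilde{\mathfrak b}$ containing $\mathfrak{g}_x$, hence an element of $\mathcal{B}_x$; the Levi $\mathfrak l$ is just $\mathfrak{g}_x$ itself; and $\mathfrak p^-=\widetilde{\mathfrak b}^-$ is the opposite Borel. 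Lemma~\ref{Lemma: 1PS} then yields $\lim_{t\to\infty}(\lambda(t),e)\cdot\mathfrak{g}_\Delta=\widetilde{\mathfrak b}\times_{\mathfrak{g}_x}\widetilde{\mathfrak b}^-=\theta(\widetilde{\mathfrak b})$. Conversely, every $\widetilde{\mathfrak b}\in\mathcal{B}_x$ arises this way: since $\mathfrak{g}_x\subseteq\widetilde{\mathfrak b}$, the Borel $\widetilde{\mathfrak b}$ determines a system of positive roots of $(\mathfrak g,\mathfrak g_x)$, and any $\lambda$ in the interior of the corresponding chamber recovers $\widetilde{\mathfrak b}$ as its parabolic. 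This gives both inclusions in \eqref{Equation: Pre2}, with the map $\widetilde{\mathfrak b}\mapsto\theta(\widetilde{\mathfrak b})$ realizing the bijection.

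One must also check injectivity of $\widetilde{\mathfrak b}\mapsto\theta(\widetilde{\mathfrak b})$, i.e.\ that distinct Borels in $\mathcal{B}_x$ give distinct subspaces of $\mathfrak g\oplus\mathfrak g$; this is immediate since $\widetilde{\mathfrak b}$ is recovered from $\theta(\widetilde{\mathfrak b})$ as the image of its first projection (or equivalently, the limit curves for one-parameter subgroups in different chambers are distinct because $\overline T$ is a genuine toric variety with the $T$-fixed points in bijection with the maximal cones of its fan, which are the Weyl chambers, which are in bijection with $\mathcal{B}_x$).

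The main obstacle I anticipate is the first step: making precise that every $T$-fixed point of $\overline{G_x}$ is of the form $\lim_{t\to\infty}\lambda(t)\cdot\mathfrak{g}_\Delta$ for some regular $\lambda$, rather than merely that such limits are fixed points. The cleanest route is to invoke the toric description of $\overline{G_x}=\overline T$ from \cite[Remark 4.5]{Evens}: its fan is the Weyl fan, whose maximal cones biject with $\mathcal{B}_x$, whose torus-fixed points biject with the maximal cones, and whose fixed points are reached precisely along the limits in question. Everything else is a direct unwinding of Lemma~\ref{Lemma: 1PS}.
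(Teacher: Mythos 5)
Your proposal is correct in substance, but it handles the harder inclusion $(\overline{G_x})^{G_x}\subseteq\{\theta(\widetilde{\mathfrak{b}})\}$ by a genuinely different route than the paper. You argue via toric geometry: cite that $\overline{T}$ is the smooth projective toric variety whose fan is the Weyl fan of $(\g,\g_x)$, so that its torus-fixed points are exactly the limits $\lim_{t\to\infty}\lambda(t)\cdot\g_{\Delta}$ over regular coweights $\lambda$, and then compute those limits with Lemma \ref{Lemma: 1PS}. The paper instead takes an arbitrary $\gamma\in(\overline{T})^{T}$, writes it as $(g_1,g_2)\cdot\p_I\times_{\lf_I}\p_I^-$ using the $(G\times G)$-orbit decomposition of $\overline{G}$, and uses the explicit stabilizer formula \eqref{Equation: StabilisermI} together with the conditions ``$\gamma$ is $(T\times\{e\})$-fixed'' and ``$(x,x)\in\gamma$'' to force $g_1\in N_G(T)$, $I=\emptyset$, and $g_2=g_1$ modulo the stabilizer, so that $\gamma=\theta(\mathrm{Ad}_{g_1}(\bb))$. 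Your approach is shorter and more conceptual, but it leans on a stronger external input: the paper's citation of \cite[Remark 4.5]{Evens} is only for ``smooth, projective, toric,'' whereas you need the identification of the fan with the Weyl fan (classical, going back to De Concini--Procesi, but you should cite it precisely). If you want to avoid that, note that your step 1 can be closed internally: completeness gives that every fixed point is $\lim_{t\to\infty}\lambda(t)\cdot\g_\Delta$ for \emph{some} coweight $\lambda$, Lemma \ref{Lemma: 1PS} identifies every such limit as $\p\times_{\lf}\p^-$, and the stabilizer formula \eqref{Equation: StabilisermI} shows that $\p\times_{\lf}\p^-$ is $(T\times\{e\})$-fixed precisely when $Z(L)=T$, i.e.\ when $\lambda$ is regular --- which is essentially the computation the paper performs. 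The paper's argument also yields slightly more, namely a description of all $(T\times\{e\})$-fixed points of $\overline{G}$ containing $(x,x)$, not just those in $\overline{T}$. Your remaining steps (the forward inclusion via Lemma \ref{Lemma: 1PS}, surjectivity onto $\mathcal{B}_x$, and injectivity of $\widetilde{\mathfrak{b}}\mapsto\theta(\widetilde{\mathfrak{b}})$) are correct and match the paper's.
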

\begin{proof}
It suffices to assume that $x\in\mathfrak{t}^{\text{r}}$, so that $G_x = T$. Now suppose that $\widetilde{\mathfrak{b}}\in\mathcal{B}_x$. Lemma \ref{Lemma: 1PS} implies that $$\theta(\widetilde\bb) = \lim_{t\longrightarrow\infty}(\lambda(t),e)\cdot\g_\Delta$$ for a suitable one-parameter subgroup $\lambda:\C^{\times}\longrightarrow T$, while we observe that $$(\lambda(t),e)\cdot\g_{\Delta}\in\kappa(T)\subseteq\overline{G}$$ for all $t\in\mathbb{C}^{\times}$. It follows that 
$\theta(\widetilde{\mathfrak{b}})\in\overline{T}$. A direct calculation establishes that $\theta(\widetilde{\mathfrak{b}})$ is a $T$-fixed point, yielding the inclusion 
$$\{\theta(\widetilde{\mathfrak{b}}):\widetilde{\mathfrak{b}}\in\mathcal{B}_x\}\subseteq\overline{T}^T.$$

To establish the opposite inclusion, suppose that $\gamma \in (\overline{T})^{T}$. Using the ($G\times G$)-orbit decomposition of $\overline{G}$ from Section \ref{Subsection: Wonderful}, we may find $I\subseteq \Pi$ and $g_1,g_2\in G$ such that 
$$\gamma = (g_1,g_2)\cdot \p_I \times_{\lf_I} \p_I^-.$$
We will first show that $g_1$ can be taken to lie in $N_G(T)$ without the loss of generality. Note that $\widetilde{\mathfrak{b}}:=\mathrm{Ad}_{g_1}(\mathfrak{b})$ will then be a Borel subalgebra containing $x$. We will then explain that $\gamma = \theta(\widetilde\bb)$, completing the proof. 
In what follows, we will need the following description of the ($G\times G$)-stabilizer of $\p_I\times_{\lf_I}\p_I^-\in\overline{G}$:
\begin{equation}\label{Equation: StabilisermI}
(G\times G)_{I} := \{(l_1u,l_2v)\in L_IU_I\times L_IU_I^-\colon l_1l_2^{-1}\in Z(L_I)\},
\end{equation}
where $L_I\subseteq G$ is the Levi subgroup with Lie algebra $\lf_I$, $Z(L_I)$ is the centre of $L_I$, and $U_I\subseteq G$ (resp. $U_I^{-}\subseteq G$) is the unipotent subgroup with Lie algebra $\uu_I$ (resp. $\uu_I^{-}$) \cite[Proposition 2.25]{Evens}.

Since $\gamma$ is fixed by $T$, we have
$$(t,e)\cdot\gamma =\gamma$$ 
for all $t\in T$. It follows that
$$(tg_1,g_2)\cdot \p_I \times_{\lf_I} \p_I^- = (g_1,g_2)\cdot\p_I \times_{\lf_I} \p_I^-,$$
i.e. 
$$(g_1^{-1}tg_1,e)\cdot \p_I \times_{\lf_I} \p_I^- = \p_I \times_{\lf_I} \p_I^-.$$
We deduce that $(g_1^{-1}tg_1,e)\in (G \times G)_{I}$, which by \eqref{Equation: StabilisermI} implies that 
$$g_1^{-1}tg_1 \in U_IZ(L_I)$$  
for all $t\in T$. In other words, 
$$g_1^{-1}Tg_1 \subseteq U_I Z(L_I)\subseteq P_I,$$
where $P_I\subseteq G$ is the parabolic subgroup with Lie algebra $\mathfrak{p}_I$. 
Noting that $g_1^{-1}Tg_1$ and $T$ are maximal tori in $P_I$, we can find a $p\in P_I$ such that 
$$g_1^{-1}Tg_1 = pTp^{-1}.$$
This shows that $(g_1p)^{-1}Tg_1p = T$, so that $g_1p\in N_G(T)$. Now use the decomposition $P_I = L_IU_I$ to write $p = lu$ with $l\in L_I$ and $u\in U_I$. Equation \eqref{Equation: StabilisermI} then tells us that $(p,l)\in (G\times G)_I$, yielding 
$$\gamma = (g_1,g_2)\cdot \p_I\times_{\lf_I}\p_I^- = (g_1p,g_2l)\cdot\p_I\times_{\lf_I}\p_I^-.$$
We may therefore take $g_1\in N_G(T)$ without the loss of generality. 

Now we show that $I=\emptyset$. An argument given above establishes that 
$$T = g_1^{-1}Tg_1 \subseteq U_IZ(L_I),$$
i.e. $T\subseteq U_IZ(L_I)$. Let $t\in T$ be arbitrary and write $t=vz$ with $v\in U_I$ and $z\in Z(L_I)$. Since $T$ contains $Z(L_I)$, we have $z\in T$ and $v = tz^{-1}\in T\cap U_I =\{e\}$. We conclude that $v = e$ and $t\in Z(L_I)$. This shows that $T\subseteq Z(L_I)$, which can only happen if $T = Z(L_I)$. One deduces that $I=\emptyset$, and this yields $\mathfrak{p}_I=\mathfrak{b}$, $\mathfrak{p}_I^{-}=\mathfrak{b}^{-}$, $\mathfrak{l}_I=\mathfrak{t}$, and
$$\gamma = (g_1,g_2)\cdot\bb\times_{\mathfrak{t}}\bb^-.$$ 

We now prove that $g_2 = g_1\in N_G(T)$ without the loss of generality. Our first observation is that  
$$(\Ad_{g_1^{-1}}(x),\Ad_{g_2^{-1}}(x)) \in \bb\times_{\mathfrak{t}}\bb^-,$$ as $(x,x)\in\gamma$. Since $g_1\in N_G(T)$, we have $\Ad_{g_1^{-1}}(x)\in \mathfrak{t}$. These last two sentences then force  
$$\Ad_{g_1^{-1}}(x) - \Ad_{g_2^{-1}}(x)\in\uu^-$$ to hold.
We also note that $\Ad_{g_1}^{-1}(x)\in\mathfrak{t}$ is regular, which by \cite[Lemma 3.1.44]{Chriss} implies that $\Ad_{g_1}^{-1}(x)+ \uu^-$ is a $U^-$-orbit. We can therefore find $u_-\in U^-$ such that 
$$\Ad_{g_2^{-1}}(x) = \Ad_{u_-g_1^{-1}}(x),$$
i.e. 
$(g_2u_-)g_1^{-1}\in G_x = T$.
This shows that $g_2u_-\in Tg_1 = g_1T$, where we have used the fact that $g_1\in N_G(T)$. We may therefore write  $g_2u_- = g_1t$ for some $t\in T$, i.e. 
$$g_1=g_2u_-t^{-1}.$$ 
Equation \eqref{Equation: StabilisermI} also implies that $(e,u_-t^{-1}) \in (G\times G)_\emptyset$, giving $(e,u_-t^{-1})\cdot\bb\times_{\mathfrak{t}}\bb^- =\bb\times_{\mathfrak{t}}\bb^-$. This in turn implies that 
$$\gamma = (g_1,g_2)\cdot\bb\times_{\mathfrak{t}}\bb^-= (g_1,g_2u_-t^{-1})\cdot \bb\times_{\mathfrak{t}}\bb^- = (g_1,g_1)\cdot \bb\times_{\mathfrak{t}}\bb^-=\theta(\widetilde{\mathfrak{b}}),$$ where $\widetilde{\mathfrak{b}}=\mathrm{Ad}_{g_1}(\mathfrak{b})$. Our proof is therefore complete.
\end{proof}

The preceding results have implications for the toric geometries of $\mathrm{Hess}(x,\mathfrak{m})$ and $\overline{\mu}_{\mathcal{S}}^{-1}(x)$, where $x\in\g^{\text{rs}}$. The following elementary lemma is needed to realize these implications.

\begin{lem}\label{Lemma: BB}
Suppose that $S$ is a complex torus, and that $X$ is a smooth, projective, toric $S$-variety with finitely many $S$-fixed points. Let $\lambda:\mathbb{C}^{\times}\longrightarrow S$ be a coweight of $S$, and assume that $(\alpha,\lambda)\neq 0$ for all $y\in X^S$ and all weights $\alpha$ of the isotropy $S$-representation $T_yX$. Assume that $x$ belongs to the unique open dense orbit $S$-orbit in $X$.
We have $$\lim_{t\longrightarrow\infty}\lambda(t)\cdot x=z$$
for some $z\in X^S$ satisfying $(\alpha,\lambda)<0$ for all $S$-weights $\alpha$ of $T_zX$.  
\end{lem}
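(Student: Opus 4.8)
The plan is to derive the statement from the Bia\l{}ynicki--Birula decomposition of $X$ with respect to the $\mathbb{C}^{\times}$-action induced by $\lambda$, exploiting the fact that a Bia\l{}ynicki--Birula stratum is preserved by any torus commuting with $\lambda(\mathbb{C}^{\times})$. First I would check that $z:=\lim_{t\to\infty}\lambda(t)\cdot x$ exists and lies in $X^S$. Existence is immediate from properness: the morphism $\mathbb{C}^{\times}\to X$, $t\mapsto\lambda(t)\cdot x$, extends over $\infty\in\mathbb{P}^1$, and the value $z$ at $\infty$ is fixed by $\lambda(\mathbb{C}^{\times})$. Let $F\subseteq X^{\lambda(\mathbb{C}^{\times})}$ be the connected component containing $z$; it is a smooth closed, hence projective, subvariety. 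Since $\lambda(\mathbb{C}^{\times})$ is central in the torus $S$, the connected group $S$ preserves $X^{\lambda(\mathbb{C}^{\times})}$ and each of its components, so $F$ is a projective $S$-variety and therefore contains a point $y\in X^S$. Then $T_yF\subseteq (T_yX)^{\lambda(\mathbb{C}^{\times})}=\bigoplus_{(\alpha,\lambda)=0}(T_yX)_{\alpha}=0$, the last equality being exactly the non-degeneracy hypothesis on $\lambda$ at the fixed points. Hence $F=\{y\}$ and $z=y\in X^S$; in particular every component of $X^{\lambda(\mathbb{C}^{\times})}$ is a point, so $X^{\lambda(\mathbb{C}^{\times})}=X^S$.

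Next I would invoke the Bia\l{}ynicki--Birula theorem: since $X$ is smooth and projective with isolated $\lambda(\mathbb{C}^{\times})$-fixed points, there is a disjoint decomposition $X=\bigsqcup_{w\in X^S}W_w$ into locally closed affine cells, where $W_w=\{p\in X:\lim_{t\to\infty}\lambda(t)\cdot p=w\}$ and $T_wW_w=\bigoplus_{(\alpha,\lambda)<0}(T_wX)_{\alpha}$, so that $\dim W_w=\dim\bigl(\bigoplus_{(\alpha,\lambda)<0}(T_wX)_{\alpha}\bigr)$. By construction $x\in W_z$. The cell $W_z$ is $S$-invariant: for $s\in S$ and $p\in W_z$ one has $\lim_{t\to\infty}\lambda(t)\cdot(s\cdot p)=s\cdot\lim_{t\to\infty}\lambda(t)\cdot p=s\cdot z=z$, using that $s$ commutes with each $\lambda(t)$ and that $z\in X^S$. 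Thus $S\cdot x\subseteq W_z$, and since $x$ lies in the unique open dense $S$-orbit, whose closure is $X$, the cell $W_z$ is dense in $X$. A locally closed subset that is dense in the irreducible variety $X$ contains a nonempty open subset of $X$, so $\dim W_z=\dim X=\dim T_zX$. Comparing with $\dim W_z=\dim\bigl(\bigoplus_{(\alpha,\lambda)<0}(T_zX)_{\alpha}\bigr)$ and using that no $S$-weight of $T_zX$ is $\lambda$-trivial, we conclude that $(\alpha,\lambda)<0$ for every $S$-weight $\alpha$ of $T_zX$, which is the assertion.

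I expect the only genuinely delicate point to be the first step: verifying that the a priori merely $\lambda(\mathbb{C}^{\times})$-fixed limit $z$ is in fact $S$-fixed, since this is where the non-degeneracy hypothesis on $(\alpha,\lambda)$ is used and where one must rule out positive-dimensional components of $X^{\lambda(\mathbb{C}^{\times})}$; everything afterwards is a formal consequence of the Bia\l{}ynicki--Birula decomposition together with irreducibility of $X$. As an alternative, one could argue purely combinatorially in the fan $\Sigma$ of $X$: the hypothesis says precisely that (the image in the cocharacter lattice of) $\lambda$ lies in the relative interior of a maximal cone, $z$ is then the torus-fixed point attached to the maximal cone whose interior contains $-\lambda$, and the claimed signs of the $S$-weights of $T_zX$ are read off from the basis of the character lattice dual to the ray generators of that cone.
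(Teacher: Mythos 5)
Your argument is correct and follows essentially the same route as the paper's proof: use the non-degeneracy hypothesis to identify $X^{S}$ with $X^{\lambda(\mathbb{C}^{\times})}$, then read off the sign condition at the limit point from the Bia\l{}ynicki--Birula decomposition for the $\mathbb{C}^{\times}$-action induced by $\lambda$. You merely spell out the two steps the paper delegates to citations (the Borel fixed-point argument showing the fixed components are reduced to points, and the density of the attracting cell of $z$), so no further comparison is needed.
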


\begin{proof}
Let $\mathbb{C}^{\times}$ act on $X$ through $\lambda$. If $y\in X^S$, then $T_yY$ is an isotropy representation of both $S$ and $\mathbb{C}^{\times}$. Each weight of the $\mathbb{C}^{\times}$-representation is given by $(\alpha,\lambda)$ for a suitable weight $\alpha$ of the $S$-representation. Since $(\alpha,\lambda)\neq 0$ for all $y\in X^S$ and $S$-weights $\alpha$ of $T_yX$, the previous sentence implies that $X^S=X^{\mathbb{C}^{\times}}$. 

Now note that $\lim_{t\longrightarrow\infty}\lambda(t)\cdot x$ exists and coincides with a point $z\in X^{\mathbb{C}^{\times}}=X^S$ (e.g. by \cite[Lemma 2.4.1]{Chriss}). The previous paragraph explains that each $\mathbb{C}^{\times}$-weight of $T_z X$ takes the form $(\alpha,\lambda)$, where $\alpha$ is an $S$-weight of $T_z X$. General facts about Bia{\l}ynicki-Birula decompositions (e.g. \cite[Theorem 2.4.3]{Chriss}) now imply that $(\alpha,\lambda)<0$ for all $S$-weights $\alpha$ of $T_zX$.      	
\end{proof}

Suppose that $x\in\g^{\text{rs}}$. Recall that a coweight $\lambda:\mathbb{C}^{\times}\longrightarrow G_x$ is called \textit{regular} if $(\alpha,\lambda)\neq 0$ for all roots $\alpha$ of $(\mathfrak{g},\g_x)$. Let us also recall the notation adopted in Lemmas \ref{Lem: First fixed} and \ref{Lemma: Fixed points}.

\begin{lem}\label{Lemma: Limit}
Suppose that $x\in\mathfrak{g}^{\emph{rs}}$ and let $g\in G$ be such that $x=\mathrm{Ad}_g(x_{\mathcal{S}})$. Given any regular coweight $\lambda:\mathbb{C}^{\times}\longrightarrow G_x$ and element $\widetilde{\mathfrak{b}}\in\mathcal{B}_x$, one has the following equivalence:
$$\lim_{t\longrightarrow\infty}\lambda(t)\cdot[g:x_{\mathcal{S}}]=z(\widetilde{\mathfrak{b}})\Longleftrightarrow\lim_{t\longrightarrow\infty}\lambda(t)\cdot (\gamma_g,(x,x_{\mathcal{S}}))=\bigg((e,g^{-1})\cdot\theta(\widetilde{\mathfrak{b}}),(x,x_{\mathcal{S}})\bigg),$$
where the left- (resp. right-) hand side is computed in $\mathrm{Hess}(x,\mathfrak{m})$ (resp. $\overline{\mu}_{\mathcal{S}}^{-1}(x)$).
\end{lem}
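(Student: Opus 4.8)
The plan is to reduce the equivalence, by $G$-equivariance, to the case $g=e$ and $x=x_{\mathcal{S}}$, and then to show that \emph{both} one-parameter limits---which exist because $\mathrm{Hess}(x_{\mathcal{S}},\m)$ and $\overline{\mu}_{\mathcal{S}}^{-1}(x_{\mathcal{S}})$ are projective---are attained at the $G_{x_{\mathcal{S}}}$-fixed point indexed, under the bijections of Lemmas~\ref{Lem: First fixed} and~\ref{Lemma: Fixed points}, by the Borel subalgebra $\widetilde{\bb}_\lambda:=\g_{x_{\mathcal{S}}}\oplus\bigoplus_{(\alpha,\lambda)>0}\g_\alpha\in\mathcal{B}_{x_{\mathcal{S}}}$, the sum being over the roots $\alpha$ of $(\g,\g_{x_{\mathcal{S}}})$. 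The reduction is obtained by acting with $g^{-1}$: this carries $[g:x_{\mathcal{S}}]$ to $[e:x_{\mathcal{S}}]$ and $(\gamma_g,(x,x_{\mathcal{S}}))$ to $(\gamma_e,(x_{\mathcal{S}},x_{\mathcal{S}}))$, conjugates $\lambda$ to a regular coweight of $G_{x_{\mathcal{S}}}$, and---using the $G$-equivariance of $\varpi_{\m}$ and $\pi_{\mathcal{S}}$ and the adjoint action on the relevant subspaces of $\g\oplus\g$---is compatible with the fixed-point parametrizations of Lemmas~\ref{Lem: First fixed} and~\ref{Lemma: Fixed points} via $\widetilde{\bb}\mapsto\Ad_{g^{-1}}\widetilde{\bb}$.

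For the right-hand limit, recall from the proof of Lemma~\ref{Lemma: Fixed points} (via \cite[Corollary~3.12]{BalibanuUniversal}) the $G_{x_{\mathcal{S}}}$-equivariant isomorphism $\overline{\mu}_{\mathcal{S}}^{-1}(x_{\mathcal{S}})\overset{\cong}{\longrightarrow}\overline{G_{x_{\mathcal{S}}}}$, $(\gamma,(x_{\mathcal{S}},x_{\mathcal{S}}))\mapsto\gamma$, which sends $(\gamma_e,(x_{\mathcal{S}},x_{\mathcal{S}}))$ to $\gamma_e=\g_\Delta$ and $(\theta(\widetilde{\bb}),(x_{\mathcal{S}},x_{\mathcal{S}}))$ to $\theta(\widetilde{\bb})$. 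As $G_{x_{\mathcal{S}}}$ acts through $G_{x_{\mathcal{S}}}\times\{e\}\subseteq G\times G$, the right-hand limit is thus $\big(\lim_{t\to\infty}(\lambda(t),e)\cdot\g_\Delta,\,(x_{\mathcal{S}},x_{\mathcal{S}})\big)$. Applying Lemma~\ref{Lemma: 1PS} with $\tilde T=G_{x_{\mathcal{S}}}$ and our regular $\lambda$---so that the associated Levi is $\g_{x_{\mathcal{S}}}$ and the associated parabolic is $\widetilde{\bb}_\lambda$---identifies this with $\theta(\widetilde{\bb}_\lambda)$. Hence the right-hand side of the asserted equivalence holds precisely when $\widetilde{\bb}=\widetilde{\bb}_\lambda$.

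For the left-hand limit, the $G_{x_{\mathcal{S}}}$-equivariant closed embedding $\varpi_{\m}:\mathrm{Hess}(x_{\mathcal{S}},\m)\hookrightarrow\mathcal{B}$ sends $z(\widetilde{\bb})$ to $\widetilde{\bb}$ and embeds $T_{z(\widetilde{\bb})}\mathrm{Hess}(x_{\mathcal{S}},\m)$ into $T_{\widetilde{\bb}}\mathcal{B}\cong\g/\widetilde{\bb}$; so every $G_{x_{\mathcal{S}}}$-weight of this tangent space is a root of $(\g,\g_{x_{\mathcal{S}}})$ and hence is nonzero on the regular coweight $\lambda$. Since $[e:x_{\mathcal{S}}]$ lies in the open dense $G_{x_{\mathcal{S}}}$-orbit of the smooth projective toric variety $\mathrm{Hess}(x_{\mathcal{S}},\m)$ (Lemma~\ref{Lemma: Unique open dense}), Lemma~\ref{Lemma: BB} shows that $\lim_{t\to\infty}\lambda(t)\cdot[e:x_{\mathcal{S}}]$ is the unique fixed point $z(\widetilde{\bb})$ all of whose tangent $\lambda$-weights are negative. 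I would then invoke the toric description of the regular semisimple Hessenberg variety (De Mari--Procesi--Shayman; cf.~\cite{DeMari}): $\mathrm{Hess}(x_{\mathcal{S}},\m)$ is the toric variety of the fan of Weyl chambers of $(\g,\g_{x_{\mathcal{S}}})$, with $z(\widetilde{\bb})$ the fixed point of the chamber of $\widetilde{\bb}$, at which the tangent $G_{x_{\mathcal{S}}}$-weights are the negatives of the simple roots of $\widetilde{\bb}$. Requiring these all to be $\lambda$-negative forces $(\alpha,\lambda)>0$ for every simple root $\alpha$ of $\widetilde{\bb}$, hence $\{\alpha:(\alpha,\lambda)>0\}$ to be the full positive system of $\widetilde{\bb}$, i.e.~$\widetilde{\bb}=\widetilde{\bb}_\lambda$. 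Thus the left-hand side also holds precisely when $\widetilde{\bb}=\widetilde{\bb}_\lambda$, and the equivalence follows. (Equivalently, once both limits are characterized via Bia{\l}ynicki--Birula theory, it is enough to check that for each $\widetilde{\bb}\in\mathcal{B}_{x_{\mathcal{S}}}$ the $G_{x_{\mathcal{S}}}$-modules $T_{z(\widetilde{\bb})}\mathrm{Hess}(x_{\mathcal{S}},\m)$ and $T_{(\theta(\widetilde{\bb}),(x_{\mathcal{S}},x_{\mathcal{S}}))}\overline{\mu}_{\mathcal{S}}^{-1}(x_{\mathcal{S}})$ have equal weights, which both toric descriptions supply---$\overline{G_{x_{\mathcal{S}}}}$ being likewise the Weyl-chamber toric variety by \cite[Remark~4.5]{Evens}.)

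I expect the main obstacle to be exactly that last point: determining the precise $G_x$-fixed point at which each limit lands and matching these across the two toric structures with their common $\mathcal{B}_x$-labelling. For $\overline{\mu}_{\mathcal{S}}^{-1}(x)$ this is immediate from Lemma~\ref{Lemma: 1PS}; for $\mathrm{Hess}(x,\m)$ it needs the explicit fan of the regular semisimple Hessenberg variety and the identification of its maximal cones with elements of $\mathcal{B}_x$ implicit in $\varpi_{\m}$, and one has to be attentive to signs (the tangent weights are the \emph{negatives} of the simple roots, consistent with $T_{z(\widetilde{\bb})}\mathrm{Hess}(x,\m)\subseteq\g/\widetilde{\bb}$) and to checking that the reduction to $g=e$ is genuinely compatible with Lemmas~\ref{Lem: First fixed} and~\ref{Lemma: Fixed points}.
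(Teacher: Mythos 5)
Your proposal is correct and follows essentially the same route as the paper: both arguments characterize the Hessenberg-side limit via the De Mari--Procesi--Shayman tangent weights $-\Pi(\widetilde{\mathfrak{b}})$ together with Lemma \ref{Lemma: BB}, and the wonderful-compactification-side limit via Lemma \ref{Lemma: 1PS}, matching the two through the condition $(\alpha,\lambda)>0$ for all $\alpha\in\Pi(\widetilde{\mathfrak{b}})$ (equivalently $\widetilde{\mathfrak{b}}=\widetilde{\mathfrak{b}}_\lambda$). The only cosmetic difference is that you conjugate everything to the fibre over $x_{\mathcal{S}}$ at the outset, whereas the paper keeps $x$ general and only translates the right-hand side by $(e,g)$.
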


\begin{proof}
Let us write $\Pi(\widetilde{\mathfrak{b}})\subseteq\mathfrak{g}_x^*$ for the set of simple roots determined by $\mathfrak{g}_x$ and $\widetilde{\mathfrak{b}}\in\mathcal{B}_x$. By \cite[Lemma 7]{DeMari}, the $G_x$-weights of the isotropy representation $T_{z(\widetilde{\mathfrak{b}})}\mathrm{Hess}(x,\mathfrak{m})$ form the set $-\Pi(\widetilde{\mathfrak{b}})$. This has two implications for any regular coweight $\lambda:\mathbb{C}^{\times}\longrightarrow G_x$. One is that $(\alpha,\lambda)\neq 0$ for all $\widetilde{\mathfrak{b}}\in\mathcal{B}_x$ and $G_x$-weights $\alpha$ of $T_{z(\widetilde{\mathfrak{b}})}\mathrm{Hess}(x,\mathfrak{m})$. The second implication is the existence of a unique  $\widetilde{\mathfrak{b}}\in\mathcal{B}_x$ such that $(\alpha,\lambda)<0$ for all $G_x$-weights $\alpha$ of $T_{z(\widetilde{\mathfrak{b}})}\mathrm{Hess}(x,\mathfrak{m})$. These last few sentences and Lemma \ref{Lemma: BB} imply the following about a given regular coweight $\lambda$ and element $\widetilde{\mathfrak{b}}\in\mathcal{B}_x$:
$$\lim_{t\longrightarrow\infty}\lambda(t)\cdot[g:x_{\mathcal{S}}]=z(\widetilde{\mathfrak{b}})\Longleftrightarrow(\alpha,\lambda)<0\text{ for all }\alpha\in -\Pi(\widetilde{\mathfrak{b}}),$$ or equivalently
$$\lim_{t\longrightarrow\infty}\lambda(t)\cdot[g:x_{\mathcal{S}}]=z(\widetilde{\mathfrak{b}})\Longleftrightarrow(\alpha,\lambda)>0\text{ for all }\alpha\in\Pi(\widetilde{\mathfrak{b}}).$$ 

In light of the previous paragraph, we are reduced to proving that
\begin{equation}\label{Equation: Implications}\lim_{t\longrightarrow\infty}\lambda(t)\cdot (\gamma_g,(x,x_{\mathcal{S}}))=\bigg((e,g^{-1})\cdot\theta(\widetilde{\mathfrak{b}}),(x,x_{\mathcal{S}})\bigg)\Longleftrightarrow(\alpha,\lambda)>0\text{ for all }\alpha\in\Pi(\widetilde{\mathfrak{b}}).\end{equation} Consider the action of $\lambda(t)$ indicated above, noting that it coincides with the action of $(\lambda(t),e)\in G\times G$ on points in $\overline{\mu}_{\mathcal{S}}^{-1}(x)\subseteq T^*\overline{G}(\log D)$. At the same time, the actions of $(\lambda(t),e)$ and $(e,g)$ on $T^*\overline{G}(\log D)$ commute with one another for all $t\in\mathbb{C}^{\times}$. Acting by $(e,g)$ therefore allows us to reformulate \eqref{Equation: Implications} as
$$\lim_{t\longrightarrow\infty}\lambda(t)\cdot (\mathfrak{g}_{\Delta},(x,x))=(\theta(\widetilde{\mathfrak{b}}),(x,x))\Longleftrightarrow(\alpha,\lambda)>0\text{ for all }\alpha\in \Pi(\widetilde{\mathfrak{b}}),$$ or equivalently
$$\lim_{t\longrightarrow\infty}(\lambda(t),e)\cdot\mathfrak{g}_{\Delta}=\theta(\widetilde{\mathfrak{b}})\Longleftrightarrow(\alpha,\lambda)>0\text{ for all }\alpha\in\Pi(\widetilde{\mathfrak{b}}).$$ This last equivalence is a straightforward consequence of Lemma \ref{Lemma: 1PS}, and our proof is complete.
\end{proof}

\subsection{Some results on gluing}
Let us use \eqref{Equation: Restricted symplecto} to identify $G\times\mathcal{S}$ with the unique open dense symplectic leaf in $\overline{G\times\mathcal{S}}$. The $G$-equivariant map $\rho:G\times\mathcal{S}\longrightarrow G\times_B\mathfrak{m}$ in Section \ref{Subsection: The standard family}
is then given by
$$\rho(\gamma_g,(\mathrm{Ad}_g(x),x))=[g:x]$$ for all $(\gamma_g,(\mathrm{Ad}_g(x),x))\in G\times\mathcal{S}\subseteq\overline{G\times\mathcal{S}}$. One also has the commutative diagram
\begin{equation}\label{Equation: Newest diagram}\begin{tikzcd}
G\times\mathcal{S} \arrow{rr}{\rho} \arrow[swap]{dr}{\overline{\mu}_{\mathcal{S}}\big\vert_{G\times\mathcal{S}}}& & G\times_B\mathfrak{m} \arrow{dl}{\nu}\\
& \mathfrak{g} & . 
\end{tikzcd}
\end{equation} 

Now fix $x\in\g^{\text{rs}}$ and consider the fibres
$$(\overline{\mu}_{\mathcal{S}}\big\vert_{G\times\mathcal{S}})^{-1}(x)=\overline{\mu}_{\mathcal{S}}^{-1}(x)\cap (G\times\mathcal{S})=:\overline{\mu}_{\mathcal{S}}^{-1}(x)^{\circ}$$
and
$$\nu^{-1}(x)=\mathrm{Hess}(x,\mathfrak{m}).$$ The commutative diagram \eqref{Equation: Newest diagram} implies that $\rho$ restricts to a $G_x$-equivariant morphism
$$\rho_x:\overline{\mu}_{\mathcal{S}}^{-1}(x)^{\circ}\longrightarrow\mathrm{Hess}(x,\mathfrak{m}).$$ 

\begin{lem}\label{Lemma: IsoMomentFibres}
If $x\in\mathfrak{g}^{\emph{rs}}$, then there exists a unique $G_x$-equivariant variety isomorphism
$$\overline{\rho}_x:\overline{\mu}_{\mathcal{S}}^{-1}(x)\overset{\cong}\longrightarrow\mathrm{Hess}(x,\mathfrak{m})$$ that extends $\rho_x$.	
\end{lem}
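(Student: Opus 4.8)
The plan is to leverage the fact that both $\overline{\mu}_{\mathcal{S}}^{-1}(x)$ and $\mathrm{Hess}(x,\mathfrak{m})$ are smooth, projective, toric $G_x$-varieties (Lemmas \ref{Lemma: Unique open dense}(iv) and \ref{Lemma: Slice lemma}(iv)), and that $\rho_x$ is already a morphism defined on the open dense $G_x$-orbit of the source. Since $\rho$ restricts to a symplectomorphism $G\times\mathcal{S}\to G\times_B\mathfrak{m}^{\times}$ onto the open dense symplectic leaf (Proposition \ref{Proposition: Slice log symplectic}), the map $\rho_x$ on the big orbits is an isomorphism, so I would first establish that $\rho_x$ is an open immersion onto a $G_x$-stable open subset of $\mathrm{Hess}(x,\mathfrak{m})$; more precisely, $\rho_x$ restricts to a $G_x$-equivariant isomorphism of the open dense $G_x$-orbit of $\overline{\mu}_{\mathcal{S}}^{-1}(x)$ (Lemma \ref{Lemma: Slice lemma}(i)--(iii)) onto the open dense $G_x$-orbit of $\mathrm{Hess}(x,\mathfrak{m})$ (Lemma \ref{Lemma: Unique open dense}(i)--(iii)), since by Lemma \ref{Lemma: Slice lemma}(iii) the point $(\gamma_g,(x,x_{\mathcal{S}}))$ lies in the former orbit and maps under $\rho$ to $[g:x_{\mathcal{S}}]$, which by Lemma \ref{Lemma: Unique open dense}(iii) lies in the latter, with $G_x$ acting freely on both. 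Since a complex torus $G_x$ acting on a smooth projective toric variety determines the variety up to equivariant isomorphism by its fan, the strategy is to show the two fans coincide.

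The key step is to match the $G_x$-fixed points together with their isotropy weights. Lemma \ref{Lem: First fixed} gives a canonical bijection $\mathcal{B}_x \to \mathrm{Hess}(x,\mathfrak{m})^{G_x}$, $\widetilde{\mathfrak{b}}\mapsto z(\widetilde{\mathfrak{b}})$, and Lemma \ref{Lemma: Fixed points} gives a bijection $\mathcal{B}_x \to \overline{\mu}_{\mathcal{S}}^{-1}(x)^{G_x}$, $\widetilde{\mathfrak{b}}\mapsto((e,g^{-1})\cdot\theta(\widetilde{\mathfrak{b}}),(x,x_{\mathcal{S}}))$. Composing, I get a canonical bijection $\phi$ between the two fixed-point sets. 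Moreover, the proof of Lemma \ref{Lemma: Limit} identifies, via \cite[Lemma 7]{DeMari} and the Bia\l ynicki-Birula analysis of Lemma \ref{Lemma: BB}, the $G_x$-weights on $T_{z(\widetilde{\mathfrak{b}})}\mathrm{Hess}(x,\mathfrak{m})$ as $-\Pi(\widetilde{\mathfrak{b}})$, and Lemma \ref{Lemma: Limit} shows that for every regular coweight $\lambda$, the BB limit $\lim_{t\to\infty}\lambda(t)\cdot[g:x_{\mathcal{S}}]$ equals $z(\widetilde{\mathfrak{b}})$ if and only if $\lim_{t\to\infty}\lambda(t)\cdot(\gamma_g,(x,x_{\mathcal{S}}))$ equals $((e,g^{-1})\cdot\theta(\widetilde{\mathfrak{b}}),(x,x_{\mathcal{S}}))$. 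Since the BB cell of a fixed point is cut out by the sign conditions $(\alpha,\lambda)<0$ on the isotropy weights, this equivalence forces the isotropy $G_x$-representations $T_{z(\widetilde{\mathfrak{b}})}\mathrm{Hess}(x,\mathfrak{m})$ and $T_{\phi^{-1}(z(\widetilde{\mathfrak{b}}))}\overline{\mu}_{\mathcal{S}}^{-1}(x)$ to have the same set of weights, hence the two fans agree on maximal cones; compatibility of the face structure follows because both varieties contain the same dense orbit, which $\rho_x$ identifies equivariantly. This yields the desired equivariant isomorphism $\overline{\rho}_x$, and its restriction to the dense orbit equals $\rho_x$ by construction.

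To conclude the extension and uniqueness claims: $\overline{\rho}_x$ is the unique $G_x$-equivariant morphism whose restriction to the dense orbit agrees with the given identification $\rho_x$ there, because two morphisms from a variety (in particular an irreducible one) to a separated variety that agree on a dense open subset coincide; existence of some extension is what the fan-matching argument provides. I would also verify that $\overline{\rho}_x$ actually restricts to $\rho_x$ on the \emph{whole} of $\overline{\mu}_{\mathcal{S}}^{-1}(x)^{\circ}$ (not merely the big orbit): since $\overline{\mu}_{\mathcal{S}}^{-1}(x)^{\circ}=\overline{\mu}_{\mathcal{S}}^{-1}(x)\cap(G\times\mathcal{S})$ is open and $\rho$ is a morphism on all of $G\times\mathcal{S}$, both $\overline{\rho}_x|_{\overline{\mu}_{\mathcal{S}}^{-1}(x)^{\circ}}$ and $\rho_x$ are morphisms agreeing on the dense orbit, so they agree everywhere on $\overline{\mu}_{\mathcal{S}}^{-1}(x)^{\circ}$.

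The main obstacle I anticipate is making precise the passage from "the BB cell decompositions match for every regular coweight" to "the fans are isomorphic as fans, not merely the underlying sets of cones." This requires being careful that the bijection $\phi$ on fixed points is induced by a \emph{single} linear isomorphism of cocharacter lattices (here the identity on $X_*(G_x)$, since both varieties carry the same torus $G_x$ acting), and that the edges/rays of the fan — i.e. the one-dimensional orbit closures joining adjacent fixed points — are matched compatibly; this is where one genuinely uses that the dense orbits are identified via $\rho_x$ and that adjacency of maximal cones is detected by codimension-one orbits, whose weights are again read off from \cite[Lemma 7]{DeMari} and Lemma \ref{Lemma: 1PS}. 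Once the fan isomorphism is established at the level of the lattice, the toric dictionary delivers $\overline{\rho}_x$ directly.
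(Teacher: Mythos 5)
Your proposal is correct and follows essentially the same route as the paper: both arguments identify the two fibres as smooth projective toric $G_x$-varieties, match their $G_x$-fixed points via $\mathcal{B}_x$ (Lemmas \ref{Lem: First fixed} and \ref{Lemma: Fixed points}) and their fans via the limit comparison of Lemma \ref{Lemma: Limit}, and then deduce uniqueness and agreement with $\rho_x$ from density of the big orbit. Your worry about extending agreement from the big orbit to all of $\overline{\mu}_{\mathcal{S}}^{-1}(x)^{\circ}$ is moot, since for $x\in\mathfrak{g}^{\mathrm{rs}}$ that set \emph{is} the open dense $G_x$-orbit, as the paper notes.
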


\begin{proof}
Choose $g\in G$ such that $x=\mathrm{Ad}_g(x_{\mathcal{S}})$. Lemma \ref{Lemma: Slice lemma} combines with our description of $G\times\mathcal{S}$ as a subset of $\overline{G\times\mathcal{S}}$ to imply that $\overline{\mu}_{\mathcal{S}}^{-1}(x)^{\circ}$ is the unique open dense $G_x$-orbit in $\overline{\mu}_{\mathcal{S}}^{-1}(x)$.  
The uniqueness of $\overline{\rho}_x$ is then a consequence of $\overline{\mu}_{\mathcal{S}}^{-1}(x)^{\circ}$ being dense in $\overline{\mu}_{\mathcal{S}}^{-1}(x)$. 

To establish existence, recall that $\overline{\mu}_{\mathcal{S}}^{-1}(x)$ and $\mathrm{Hess}(x,\mathfrak{m})$ are smooth, projective, toric $G_x$-varieties with respective points \begin{equation}\label{Equation: Points} (\gamma_g,(x,x_{\mathcal{S}}))\quad\text{and}\quad [g:x_{\mathcal{S}}]\end{equation}
in their open dense $G_x$-orbits (see Lemmas \ref{Lemma: Unique open dense} and \ref{Lemma: Slice lemma}). Recall also that elements of $\overline{\mu}_{\mathcal{S}}^{-1}(x)^{G_x}$ and $\mathrm{Hess}(x,\mathfrak{m})^{G_x}$ are in correspondence with elements of $\mathcal{B}_x$ (see Lemmas \ref{Lem: First fixed} and \ref{Lemma: Fixed points}), and that the points \eqref{Equation: Points} limit to corresponding $G_x$-fixed points under a given regular coweight (see Lemma \ref{Lemma: Limit}). By these last two sentences, there exists a unique $G_x$-variety isomorphism
$$\overline{\rho}_x:\overline{\mu}_{\mathcal{S}}^{-1}(x)\overset{\cong}\longrightarrow\mathrm{Hess}(x,\mathfrak{m})$$
satisfying
$$\overline{\rho}_x(\gamma_g,(x,x_{\mathcal{S}}))=[g:x_{\mathcal{S}}].$$ Note that $\rho_x$ is also $G_x$-equivariant and satisfies
$$\rho_x(\gamma_g,(x,x_{\mathcal{S}}))=[g:x_{\mathcal{S}}].$$ Since $\overline{\rho}_x$ and $\rho_x$ are $G_x$-equivariant, these last two equations imply that
$\overline{\rho}_x$ and $\rho_x$ coincide on $$G_x\cdot (\gamma_g,(x,x_{\mathcal{S}}))= \overline{\mu}_{\mathcal{S}}^{-1}(x)^{\circ}.$$ 
\end{proof}

\begin{rem}\label{Remark: Continuous}
One may take the following more holistic view of the proof given above. Each of $\overline{\mu}_{\mathcal{S}}^{-1}(\mathfrak{g}^{\text{rs}})\longrightarrow\mathfrak{g}^{\text{rs}}$ and $\nu^{-1}(\mathfrak{g}^{\text{rs}})\longrightarrow\mathfrak{g}^{\text{rs}}$ is a family of smooth projective toric varieties. In each family, the fan of the toric variety over $x\in\mathfrak{g}^{\text{rs}}$ is the decomposition of $\mathfrak{g}_x$ into Weyl chambers (see Lemma \ref{Lemma: Limit} and \eqref{Equation: Implications}). The isomorphisms $\overline{\rho}_x$ simply result from this fibrewise equality of fans. One then sees that the $\overline{\rho}_x$ glue together to define a variety isomorphism
$$\overline{\mu}_{\mathcal{S}}^{-1}(\g^{\text{rs}})\overset{\cong}\longrightarrow\nu^{-1}(\g^{\text{rs}}).$$ Since $\overline{\rho}_x$ is defined on $\overline{\mu}_{\mathcal{S}}^{-1}(x)$ and takes values in $\mathrm{Hess}(x,\mathfrak{m})=\nu^{-1}(x)$ for each $x\in\mathfrak{g}^{\text{rs}}$, it follows that \begin{equation}\label{Equation: Newer Newest diagram}\begin{tikzcd}
\overline{\mu}_{\mathcal{S}}^{-1}(\g^{\text{rs}}) \arrow{rr}{\cong} \arrow[swap]{dr}{\overline{\mu}_{\mathcal{S}}\big\vert_{\overline{\mu}_{\mathcal{S}}^{-1}(\g^{\text{rs}})}}& & \nu^{-1}(\g^{\text{rs}}) \arrow{dl}{\nu\big\vert_{\nu^{-1}(\g^{\text{rs}})}}\\
& \mathfrak{g} & . 
\end{tikzcd}
\end{equation} commutes.
\end{rem}

\subsection{The isomorphism in codimension one}
We now undertake a brief digression on transverse intersections in a $G$-variety. To this end, let $X$ be an arbitrary algebraic variety. Write $X_{\text{sing}}$ and $X_{\text{smooth}}:=X\setminus X_{\text{sing}}$ for the singular and smooth loci of $X$, respectively.
Given any closed subvariety $Y\subseteq X$, let
$$\mathrm{codim}_X(Y):=\dim X-\dim Y$$ denote the codimension of $Y$ in $X$.
 
\begin{lem}\label{Lemma: Transverse equivariant}
Suppose that $X$ is a smooth $G$-variety containing a $G$-invariant closed subvariety $Y\subseteq X$. Let $Z\subseteq X$ be a smooth closed subvariety with the property of being transverse to each $G$-orbit in $X$. If $W$ is an irreducible component of $Y\cap Z$, then 
$$\mathrm{codim}_{Z}(W)\geq\mathrm{codim}_X(Y).$$ 
\end{lem}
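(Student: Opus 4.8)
The plan is to reduce the statement to a purely local, pointwise assertion about tangent spaces and then invoke transversality to each $G$-orbit. First I would pick a point $z$ in the irreducible component $W\subseteq Y\cap Z$, chosen in the smooth locus of $W$ (and, after possibly shrinking, so that $z$ lies in a single $G$-orbit $\mathcal{O}:=Gz$ whose dimension is as large as possible among orbits meeting $W$; generic points of $W$ will do). Working at such a generic $z$, one has $\dim_z W = \dim T_z W$, and $T_z W \subseteq T_z Y \cap T_z Z$ inside $T_z X$.

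The key computation is then the following tangent-space estimate. Since $Z$ is transverse to every $G$-orbit in $X$, in particular $T_z X = T_z Z + T_z\mathcal{O} = T_z Z + \g\cdot z$. Because $Y$ is $G$-invariant, $\g\cdot z \subseteq T_z Y$. I would argue that $W$ being an irreducible component of $Y\cap Z$ forces, at a generic point $z$, the inclusion $T_z Y \cap T_z Z \subseteq T_z W$ up to a controlled error — more precisely, I would instead run the dimension count directly: from $T_zX = T_zZ + T_z Y$ (which holds since $T_zX = T_zZ + \g\cdot z \subseteq T_zZ + T_zY$) we get
$$\dim T_z(Y\cap Z)\;=\;\dim(T_zY\cap T_zZ)\;=\;\dim T_zY + \dim T_zZ - \dim T_zX.$$
Hence $\mathrm{codim}_{T_zX}(T_zY\cap T_zZ) = \mathrm{codim}_{T_zX}(T_zY) + \mathrm{codim}_{T_zX}(T_zZ)$, and in particular $\mathrm{codim}_{T_zZ}(T_zY\cap T_zZ) = \mathrm{codim}_{T_zX}(T_zY)$. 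Since $X$ is smooth, $\mathrm{codim}_{T_zX}(T_zY) \geq \mathrm{codim}_X(Y)$ (the tangent cone can only be larger than the variety's dimension). Finally $T_z W \subseteq T_z Y \cap T_z Z$ as subspaces of $T_z Z$, and $Z$ is smooth so $\dim T_z Z = \dim Z$; at a generic point of $W$ we have $\dim W = \dim T_z W \le \dim(T_zY\cap T_zZ)$, giving
$$\mathrm{codim}_Z(W) = \dim Z - \dim W \;\geq\; \dim T_zZ - \dim(T_zY\cap T_zZ) \;=\; \mathrm{codim}_{T_zX}(T_zY) \;\geq\; \mathrm{codim}_X(Y).$$

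I expect the main obstacle to be the bookkeeping around singular points: $Y$ need not be smooth, so one must be careful that $\dim T_z Y \ge \dim Y$ with equality generically, and that $W$'s generic point can be chosen to simultaneously avoid $Y_{\mathrm{sing}}$-related pathologies while still lying on a maximal-dimensional orbit so the transversality hypothesis $T_zX = T_zZ + \g\cdot z$ is available. A clean way to handle this is to first intersect with $X_{\mathrm{smooth}}$ (which is all of $X$ here) and then restrict attention to the dense open locus of $W$ where $W$, and the ambient strata, are smooth; on that locus the inequality above is literal, and since codimension is computed on a dense open subset the bound propagates to all of $W$. One should also note the hypotheses are used exactly as stated: smoothness of $X$ and $Z$ to pass between tangent spaces and dimensions, $G$-invariance of $Y$ to get $\g\cdot z\subseteq T_zY$, and transversality to $G$-orbits to get the surjectivity $T_zZ + T_zY = T_zX$.
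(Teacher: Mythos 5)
Your tangent-space computation is the right idea and is essentially the paper's argument, but the step $\mathrm{codim}_{T_zX}(T_zY)\geq\mathrm{codim}_X(Y)$, i.e. $\dim T_zY\leq\dim Y$, is exactly where the difficulty sits, and as written it is false. Your parenthetical justification points the wrong way: at a singular point of $Y$ the Zariski tangent space is \emph{larger} than $\dim_zY$, which makes $\mathrm{codim}_{T_zX}(T_zY)$ \emph{smaller}, not larger, than $\mathrm{codim}_X(Y)$. Passing to a generic point of $W$ does not repair this, because $W$ is a component of $Y\cap Z$ and may be entirely contained in $Y_{\mathrm{sing}}$; genericity in $W$ has nothing to do with genericity in $Y$, so ``equality generically'' is not available where you need it. (Separately, your worry about choosing $z$ on a maximal-dimensional orbit is a red herring: transversality of $Z$ to \emph{each} $G$-orbit gives $T_zX=T_zZ+\g\cdot z$ at every point $z\in Z$.)

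The fix you gesture at with ``the ambient strata'' is precisely what the paper does, and it needs to be carried out rather than assumed. Set $Y_0:=Y$ and $Y_{j+1}:=(Y_j)_{\mathrm{sing}}$, and observe that each $Y_j$ and each $(Y_j)_{\mathrm{smooth}}$ is $G$-invariant --- this is the nontrivial compatibility your argument requires, since after replacing $Y$ by a stratum you still need $\g\cdot z$ to lie in the tangent space of that stratum. Taking $k$ minimal with $(Y_k)_{\mathrm{smooth}}\cap W\neq\emptyset$, one checks $W\subseteq Y_k$ and that $(Y_k)_{\mathrm{smooth}}\cap W=W\setminus Y_{k+1}$ is open dense in $W$. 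At a point $w$ of this set your dimension count goes through verbatim with $Y_k$ in place of $Y$: now $\dim T_wY_k=\dim_wY_k\leq\dim Y$ because $w$ is a smooth point of $Y_k$, and $T_wX=T_wZ+\g\cdot w\subseteq T_wZ+T_wY_k$ by $G$-invariance of the stratum, whence $\dim W\leq\dim T_wY_k+\dim Z-\dim X\leq\dim Y+\dim Z-\dim X$. With that substitution your proof becomes the paper's.
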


\begin{proof}
Set $Y_0:=Y$ and recursively define $Y_{j+1}:=(Y_j)_{\text{sing}}$ for all $j\in\mathbb{Z}_{\geq 0}$. One then has a descending chain
$$Y=Y_0\supseteq Y_1\supseteq Y_2\supseteq\cdots$$ of $G$-invariant closed subvarieties in $Y$. The locally closed subvarieties $(Y_j)_{\mathrm{smooth}}\subseteq Y$ are also $G$-invariant, and we observe that these subvarieties have a disjoint union equal to $Y$.

Let $k$ be the smallest non-negative integer for which $(Y_k)_{\text{smooth}}\cap W\neq\emptyset$. It is then straightforward to verify that $(Y_k)_{\text{smooth}}\cap W=W\setminus Y_{k+1}$. We conclude that $(Y_k)_{\text{smooth}}\cap W$ is an open dense subset of $W$, and this implies that $W\subseteq Y_k$.

Now choose a point $w\in (Y_k)_{\text{smooth}}\cap W$. Since $(Y_k)_{\text{smooth}}$ is $G$-invariant and $Z$ is transverse to every $G$-orbit in $X$, the varieties $(Y_k)_{\text{smooth}}$ and $Z$ have a transverse intersection at $w$. It follows that $w$ is a smooth point of $Y_k\cap Z$, and that
$$\dim W\leq \dim(T_w(Y_k\cap Z))=\dim(T_wY_k)+\dim Z-\dim X\leq \dim Y+\dim Z-\dim X.$$ This yields the conclusion
$$\mathrm{codim}_{Z}(W)\geq\mathrm{codim}_X(Y).$$ 
\end{proof}

We will ultimately apply this result in a context relevant to our main results. To this end, use \eqref{Equation: Open embedding} to identify $T^*G$ with the unique open dense symplectic leaf in $T^*\overline{G}(\log D)$. Consider the open subvariety
$$T^*\overline{G}(\log D)^{\circ}:=T^*G\cup\overline{\mu}_L^{-1}(\g^{\text{rs}})\subseteq T^*\overline{G}(\log D)$$ and its complement
$$T^*\overline{G}(\log D)':=T^*\overline{G}(\log D)\setminus T^*\overline{G}(\log D)^{\circ},$$
where $\overline{\mu}_L:T^*\overline{G}(\log D)\longrightarrow\g$ is defined in Section \ref{Subsection: KW}.
\begin{lem}\label{Lemma: First codimension}
We have $$\mathrm{codim}_{T^*\overline{G}(\log D)}(T^*\overline{G}(\log D)')\geq 2.$$
\end{lem}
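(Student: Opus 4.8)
The plan is to compute the dimension of the complement $T^*\overline{G}(\log D)'$ directly by stratifying $\overline G$ into $(G\times G)$-orbits and controlling, for each orbit, the locus of points $(\gamma,(x,y))$ that are excluded. First I would observe that $T^*\overline{G}(\log D)' = \pi^{-1}(D)\cap\overline{\mu}_L^{-1}(\g\setminus\g^{\text{rs}})$; indeed a point $(\gamma,(x,y))$ lies outside $T^*G$ exactly when $\gamma\in D$, and it lies outside $\overline{\mu}_L^{-1}(\g^{\text{rs}})$ exactly when $x\notin\g^{\text{rs}}$, so the complement of $T^*\overline G(\log D)^{\circ}$ is the intersection of these two closed conditions. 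Since $\dim T^*\overline G(\log D) = 2n$ where $n=\dim\g$, it suffices to show every irreducible component of this intersection has dimension at most $2n-2$.

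Next I would stratify according to the $(G\times G)$-orbit $\mathcal O_I = (G\times G)(\p_I\times_{\lf_I}\p_I^-)$ of $\gamma$, for $I\subseteq\Pi$. The complement $D$ is the union of the orbits with $I\neq\Pi$; each such orbit has positive codimension in $\overline G$, and I would record the dimension $\dim\mathcal O_I$ together with the dimension of the fibre of $T^*\overline G(\log D)\to\overline G$, which is constantly $n$. Restricting to $\mathcal O_I$, the piece of $T^*\overline G(\log D)'$ lying over it is $\{(\gamma,(x,y)): \gamma\in\mathcal O_I,\ (x,y)\in\gamma,\ x\notin\g^{\text{rs}}\}$. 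Over a fixed $\gamma\in\mathcal O_I$ the condition $(x,y)\in\gamma$ cuts out the $n$-dimensional subspace $\gamma$, and within that subspace the locus where the first coordinate $x$ is non-regular-semisimple is a closed subvariety; I would argue that this locus has codimension at least one inside $\gamma$, using that $\gamma$ is a translate of $\p_I\times_{\lf_I}\p_I^-$ whose image under the first projection is all of $\Ad_{g_1}(\p_I)$, a subspace on which regular semisimple elements form a nonempty (hence dense) open set. This gives each such stratum dimension at most $\dim\mathcal O_I + (n-1) \le (\dim\overline G - 1) + (n-1) = n + n - 2 = 2n-2$, using $\dim\overline G = n$ and $\dim\mathcal O_I\le n-1$ for $I\neq\Pi$.

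The one subtlety to handle carefully is the open orbit piece $T^*G\cap\overline{\mu}_L^{-1}(\g\setminus\g^{\text{rs}})$ versus the boundary: $T^*\overline G(\log D)'$ excludes all of $T^*G$, so over the open orbit $I=\Pi$ there is nothing to count, and the only contributions come from $I\neq\Pi$, exactly the strata bounded above. Assembling the finitely many strata and taking the maximum of their dimensions yields $\dim T^*\overline G(\log D)' \le 2n-2$, i.e. the asserted codimension bound.

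The main obstacle I anticipate is the bookkeeping in the stratum estimate: one must verify cleanly that for every $I\neq\Pi$ the non-regular-semisimple locus inside a fibre $\gamma$ is a proper closed subset — equivalently that $\gamma$ contains a regular semisimple element in its first coordinate. Since the first projection of $\p_I\times_{\lf_I}\p_I^-$ is a parabolic subalgebra, and every parabolic subalgebra meets $\g^{\text{rs}}$ (it contains a Cartan subalgebra), this is true, but one should phrase it so that it applies uniformly over the orbit and interacts correctly with the constraint $(x,y)\in\gamma$. Once that local statement is in place, the dimension count is routine, and the $(G\times G)$-equivariance lets us check it on the single basepoint $\p_I\times_{\lf_I}\p_I^-$ of each orbit.
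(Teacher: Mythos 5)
Your argument is correct, and it reaches the conclusion by a genuinely different route from the paper. Both proofs begin with the same reformulation $T^*\overline{G}(\log D)'=\pi^{-1}(D)\setminus\overline{\mu}_L^{-1}(\g^{\text{rs}})$, but the paper then reduces everything to a single existence statement: it quotes the decomposition of $D$ into irreducible components (the closures of the orbits $\mathcal{O}_I$ with $I=\Pi\setminus\{\alpha\}$), observes that the corresponding components $\pi^{-1}(\overline{\mathcal{O}_I})$ of the divisor $\pi^{-1}(D)$ are irreducible of codimension one, and exhibits one explicit point of $\overline{\mu}_L^{-1}(\g^{\text{rs}})$ in each, namely $\bigl((e,g^{-1})\cdot\p_I\times_{\lf_I}\p_I^-,(x,x_{\mathcal{S}})\bigr)$ for $x\in\mathfrak{t}^{\text{r}}$; removing a nonempty open set from each irreducible codimension-one component then drops the dimension by at least one more. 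You instead run a stratum-by-stratum fibre dimension count over all boundary orbits $\mathcal{O}_I$, $I\neq\Pi$, using that the first projection of $(g_1,g_2)\cdot\p_I\times_{\lf_I}\p_I^-$ surjects onto the parabolic $\Ad_{g_1}(\p_I)$, in which regular semisimple elements are dense, so the excluded locus in each fibre $\gamma$ has dimension at most $n-1$, giving $\dim\mathcal{O}_I+(n-1)\leq 2n-2$ for each stratum. Your version avoids having to identify the irreducible components of $\pi^{-1}(D)$ (and hence the citation to B\u{a}libanu's description of $D$), at the cost of proving the fibrewise genericity statement uniformly over each orbit; the paper's version is shorter because irreducibility of the boundary components lets a single well-chosen point do all the work. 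Both are complete proofs.
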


\begin{proof}
We begin by observing that
$$T^*\overline{G}(\log D)'=\pi^{-1}(D)\setminus\overline{\mu}_L^{-1}(\g^{\text{rs}}),$$
where $\pi:T^*\overline{G}(\log D)\longrightarrow\overline{G}$ is the bundle projection and $D=\overline{G}\setminus G$. It therefore suffices to prove that $\overline{\mu}_L^{-1}(\g^{\text{rs}})$ meets each irreducible component of $\pi^{-1}(D)$. 

Now note that 
$$D=\bigcup_{I}\overline{(G\times G)\p_I\times_{\lf_I}\p_I^-}$$
is the decomposition of $D$ into irreducible components, where $I$ ranges over all subsets of the form $\Pi\setminus\{\alpha\}$, $\alpha\in\Pi$ \cite[Section 3.1]{BalibanuUniversal}. We conclude that $$\pi^{-1}(D) = \bigcup_I\pi^{-1}\bigg(\overline{(G\times G)\p_I\times_{\lf_I}\p_I^-}\bigg)$$ is the decomposition of $\pi^{-1}(D)$ into irreducible components. It therefore suffices to prove that $\mu^{-1}(\g^{\text{rs}})$ meets $\pi^{-1}((G\times G)\p_I\times_{\lf_I}\p_I^-)$ for all $I\subseteq\Pi$ of the form $I=\Pi\setminus\{\alpha\}$, $\alpha\in\Pi$.

Choose $x\in\mathfrak{t}^{\text{r}}$ and an element $g\in G$ satisfying $x=\mathrm{Ad}_g(x_{\mathcal{S}})$. We then have $$(x,x_{\mathcal{S}})\in (e,g^{-1})\cdot\p_I\times_{\lf_I}\p_I^-$$ for all $I\subseteq\Pi$. We also observe that
$$\bigg((e,g^{-1})\cdot\p_I\times_{\lf_I}\p_I^-,(x,x_{\mathcal{S}})\bigg)\in\overline{\mu}_L^{-1}(\g^{\text{rs}})\cap \pi^{-1}((G\times G)\p_I\times_{\lf_I}\p_I^-)$$ for all $I\subseteq\Pi$. This completes the proof.  
\end{proof}

Now consider the open subvariety
$$\overline{G\times\mathcal{S}}^{\circ}:=(G\times\mathcal{S})\cup\overline{\mu}_{\mathcal{S}}^{-1}(\g^{\text{rs}})\subseteq\overline{G\times\mathcal{S}}$$ and its complement
$$\overline{G\times\mathcal{S}}':=\overline{G\times\mathcal{S}}\setminus \overline{G\times\mathcal{S}}^{\circ}.$$ 

\begin{lem}\label{Lemma: Codimgeq2}
	We have $$\mathrm{codim}_{\overline{G\times\mathcal{S}}}(\overline{G\times\mathcal{S}}')\geq 2.$$
\end{lem}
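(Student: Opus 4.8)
The plan is to realize $\overline{G\times\mathcal{S}}'$ as the intersection of $\overline{G\times\mathcal{S}}$ with the set $T^*\overline{G}(\log D)'$ appearing in Lemma \ref{Lemma: First codimension}, and then to run Lemma \ref{Lemma: Transverse equivariant} with $X=T^*\overline{G}(\log D)$, $Y=T^*\overline{G}(\log D)'$, and $Z=\overline{G\times\mathcal{S}}$; Lemma \ref{Lemma: First codimension} then supplies the required bound on $\mathrm{codim}_X(Y)$.

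First I would establish the set-theoretic identity $\overline{G\times\mathcal{S}}'=\overline{G\times\mathcal{S}}\cap T^*\overline{G}(\log D)'$. Using $T^*\overline{G}(\log D)^{\circ}=T^*G\cup\overline{\mu}_L^{-1}(\g^{\text{rs}})$, one intersects with $\overline{G\times\mathcal{S}}$: under the identification of $T^*G$ with the open dense symplectic leaf via $\Psi$ one has $\overline{G\times\mathcal{S}}\cap T^*G=G\times\mathcal{S}$ (this is precisely the restricted embedding \eqref{Equation: Restricted symplecto}), while $\overline{G\times\mathcal{S}}\cap\overline{\mu}_L^{-1}(\g^{\text{rs}})=\overline{\mu}_{\mathcal{S}}^{-1}(\g^{\text{rs}})$ since $\overline{\mu}_{\mathcal{S}}=\overline{\mu}_L\big\vert_{\overline{G\times\mathcal{S}}}$. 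Hence $\overline{G\times\mathcal{S}}\cap T^*\overline{G}(\log D)^{\circ}=(G\times\mathcal{S})\cup\overline{\mu}_{\mathcal{S}}^{-1}(\g^{\text{rs}})=\overline{G\times\mathcal{S}}^{\circ}$, and taking complements inside $\overline{G\times\mathcal{S}}$ gives the claimed identity.

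Next I would verify the hypotheses of Lemma \ref{Lemma: Transverse equivariant} for the $(\{e\}\times G)$-action on $T^*\overline{G}(\log D)$. The variety $X=T^*\overline{G}(\log D)$ is smooth; $Z=\overline{G\times\mathcal{S}}=\overline{\mu}_R^{-1}(\mathcal{S})$ is closed, and smooth since it is log symplectic by the discussion in Section \ref{Subsection: KW}; and $Y=T^*\overline{G}(\log D)'$ is $(\{e\}\times G)$-invariant, because $T^*G=\pi^{-1}(G)$ is $(G\times G)$-stable and $\overline{\mu}_L$ is $(\{e\}\times G)$-invariant, so both pieces of $T^*\overline{G}(\log D)^{\circ}$ are $(\{e\}\times G)$-stable. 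The one substantive point — and the step I expect to be the main obstacle — is transversality of $Z$ to every $(\{e\}\times G)$-orbit in $X$. Here I would use that $\overline{\mu}_R$ is the moment map for the $(\{e\}\times G)$-action and $\mathcal{S}\subseteq\g^{\text{r}}$ is a Kostant section: for $p\in\overline{G\times\mathcal{S}}$ the point $\overline{\mu}_R(p)\in\mathcal{S}$ is regular, the differential $d(\overline{\mu}_R)_p$ maps the tangent space of the orbit through $p$ onto the tangent space of the adjoint orbit of $\overline{\mu}_R(p)$, and $\mathcal{S}$ meets that adjoint orbit transversally; consequently $\overline{\mu}_R$ is transverse to $\mathcal{S}$ at $p$ and $T_p(\overline{G\times\mathcal{S}})+T_p\big((\{e\}\times G)\cdot p\big)=T_pX$. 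Orbits disjoint from $\overline{G\times\mathcal{S}}$ are transverse to it vacuously, and this transversality is also recorded in \cite{CrooksRoeserSlice}.

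Granting the above, Lemma \ref{Lemma: Transverse equivariant} shows that every irreducible component $W$ of $Y\cap Z=\overline{G\times\mathcal{S}}'$ satisfies $\mathrm{codim}_{\overline{G\times\mathcal{S}}}(W)\geq\mathrm{codim}_{T^*\overline{G}(\log D)}\big(T^*\overline{G}(\log D)'\big)$, which is at least $2$ by Lemma \ref{Lemma: First codimension}. Since the codimension of $\overline{G\times\mathcal{S}}'$ equals the minimum of the codimensions of its irreducible components, this gives $\mathrm{codim}_{\overline{G\times\mathcal{S}}}(\overline{G\times\mathcal{S}}')\geq 2$, as desired.
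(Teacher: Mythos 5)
Your proposal is correct and follows essentially the same route as the paper: identify $\overline{G\times\mathcal{S}}'$ with $\overline{G\times\mathcal{S}}\cap T^*\overline{G}(\log D)'$, invoke the transversality of the Poisson slice $\overline{\mu}_R^{-1}(\mathcal{S})$ to the $(\{e\}\times G)$-orbits (the paper cites \cite[Proposition 3.6]{CrooksRoeserSlice} for this, which you also note), and apply Lemma \ref{Lemma: Transverse equivariant} together with the bound from Lemma \ref{Lemma: First codimension}. The only difference is that you spell out the moment-map argument for transversality rather than just citing it, which is a harmless elaboration.
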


\begin{proof}
Our task is to prove that each irreducible component of $\overline{G\times\mathcal{S}}'$ has codimension at least two in $\overline{G\times\mathcal{S}}$. We begin by showing ourselves to be in the situation of Lemma \ref{Lemma: Transverse equivariant}. To this end, consider $X:=T^*\overline{G}(\log D)$ and the action of $G=\{e\}\times G\subseteq G\times G$ on $X$. Let $Y$ be the subvariety $T^*\overline{G}(\log D)'\subseteq T^*\overline{G}(\log D)$ considered in Lemma \ref{Lemma: First codimension}, and set $Z:=\overline{G\times\mathcal{S}}$. Since $Z=\overline{\mu}_R^{-1}(\mathcal{S})$, \cite[Proposition 3.6]{CrooksRoeserSlice} implies that $Z$ is transverse to every $G$-orbit in $X$. We also note that 
$$Y\cap Z=\overline{G\times\mathcal{S}}\setminus (T^*G\cup\overline{\mu}_L^{-1}(\g^{\text{rs}}))=\overline{G\times\mathcal{S}}\setminus((G\times\mathcal{S})\cup\overline{\mu}_{\mathcal{S}}^{-1}(\g^{\text{rs}}))=\overline{G\times\mathcal{S}}',$$
and that 
$$\mathrm{codim}_X(Y)=\mathrm{codim}_{T^*\overline{G}(\log D)}(T^*\overline{G}(\log D)')\geq 2$$
by Lemma \ref{Lemma: First codimension}. The desired conclusion now follows immediately from Lemma \ref{Lemma: Transverse equivariant}.
\end{proof}

Now recall the notation and content of Section \ref{Subsection: The standard family}. Consider the open subvariety $$(G\times_B\mathfrak{m})^{\circ}:=(G\times_B\mathfrak{m}^{\times})\cup\nu^{-1}(\g^{\text{rs}})\subseteq G\times_B\mathfrak{m}$$
and its complement
$$(G\times_B\mathfrak{m})':=G\times_B\mathfrak{m}\setminus (G\times_B\mathfrak{m})^{\circ}.$$

\begin{lem}\label{Lemma: Next extension}
We have
$$\mathrm{codim}_{G\times_B\mathfrak{m}}((G\times_B\mathfrak{m})')\geq 2.$$
\end{lem}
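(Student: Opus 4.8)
The plan is to mimic the strategy of Lemma \ref{Lemma: Codimgeq2}, transferring the codimension estimate from the ambient space $G\times_B\mathfrak{m}$ itself rather than from a log cotangent bundle. Concretely, I would identify the complement $(G\times_B\mathfrak{m})'$ as the locus lying over the divisor of the log symplectic structure and away from the regular semisimple locus, and then show that every irreducible component of the divisor is met by $\nu^{-1}(\mathfrak{g}^{\mathrm{rs}})$. First I would recall from Proposition \ref{Proposition: Slice log symplectic} that $G\times_B\mathfrak{m}^{\times}$ is the unique open dense symplectic leaf, so the complement of $G\times_B\mathfrak{m}^{\times}$ in $G\times_B\mathfrak{m}$ is exactly $G\times_B(\mathfrak{m}\setminus\mathfrak{m}^{\times})$, i.e. the preimage under the bundle projection $\pi_{\mathfrak{m}}:G\times_B\mathfrak{m}\to G/B$ (after the fibrewise identification) of the $B$-invariant divisor $\mathfrak{m}\setminus\mathfrak{m}^{\times}=\bigcup_{\alpha\in\Pi}\{x\in\mathfrak{m}:c_\alpha(x)=0\}$, where $c_\alpha$ is the coordinate extracting the $\mathfrak{g}_{-\alpha}$-component. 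Writing $\mathfrak{m}_\alpha:=\{x\in\mathfrak{m}:c_\alpha(x)=0\}$, the divisor decomposes into $|\Pi|$ irreducible pieces $G\times_B\mathfrak{m}_\alpha$, each of codimension one in $G\times_B\mathfrak{m}$, and $(G\times_B\mathfrak{m})'=\bigl(\bigcup_\alpha G\times_B\mathfrak{m}_\alpha\bigr)\setminus\nu^{-1}(\mathfrak{g}^{\mathrm{rs}})$.

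The core of the argument is then to show that for each simple root $\alpha$, the regular semisimple locus $\nu^{-1}(\mathfrak{g}^{\mathrm{rs}})$ meets $G\times_B\mathfrak{m}_\alpha$; granting this, each $G\times_B\mathfrak{m}_\alpha$ contributes nothing of codimension one to $(G\times_B\mathfrak{m})'$, so every irreducible component of $(G\times_B\mathfrak{m})'$ has codimension at least two. To produce a point of the intersection I would argue as in the proof of Lemma \ref{Lemma: First codimension}: pick a regular semisimple $x\in\mathfrak{t}^{\mathrm{r}}$, choose $g\in G$ with $x=\mathrm{Ad}_g(x_{\mathcal{S}})$, and consider classes of the form $[g:y]$ with $y\in\mathfrak{b}\cap\mathrm{Ad}_{g^{-1}}(x)$-type elements — more precisely, since $x$ is regular semisimple and $\mathrm{Ad}_{g^{-1}}(x)$ lies in some Borel containing it, one can arrange $y\in\mathfrak{m}_\alpha$ (indeed in $\mathfrak{b}\subseteq\mathfrak{m}_\alpha$, as every $c_\alpha$ vanishes on $\mathfrak{b}$) with $\mathrm{Ad}_g(y)=x$. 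Alternatively, and perhaps more cleanly, one can invoke Lemma \ref{Lemma: Fixed points} / Lemma \ref{Lem: First fixed}: the $G_x$-fixed points $z(\widetilde{\mathfrak{b}})$ of $\mathrm{Hess}(x,\mathfrak{m})$ for $x\in\mathfrak{g}^{\mathrm{rs}}$ are of the form $[h:y]$ with $y\in\widetilde{\mathfrak{b}}\cap\mathfrak{g}_x\subseteq\mathfrak{b}$-conjugate, and these lie in $G\times_B\mathfrak{b}\subseteq G\times_B\mathfrak{m}_\alpha$ for every $\alpha$. Since $\mathfrak{b}\subseteq\mathfrak{m}_\alpha$ and $G\times_B\mathfrak{b}$ is nonempty, $\nu^{-1}(\mathfrak{g}^{\mathrm{rs}})\cap(G\times_B\mathfrak{m}_\alpha)\neq\emptyset$.

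I expect the main obstacle to be the clean bookkeeping of the decomposition of $\mathfrak{m}\setminus\mathfrak{m}^{\times}$ into irreducible divisorial components and confirming that these are precisely the irreducible components of the divisor of the log symplectic variety $G\times_B\mathfrak{m}$ — one should check, perhaps citing \cite{AbeCrooks} or \cite{BalibanuUniversal}, that the divisor is reduced with exactly these $|\Pi|$ components, each of codimension one, so that no other codimension-one stratum can hide inside $(G\times_B\mathfrak{m})'$. Once that structural fact is in hand, the nonemptiness of $\nu^{-1}(\mathfrak{g}^{\mathrm{rs}})\cap(G\times_B\mathfrak{m}_\alpha)$ is immediate from $\mathfrak{b}\subseteq\mathfrak{m}_\alpha$ together with the existence of regular semisimple elements in $\mathfrak{b}$ (e.g. in $\mathfrak{t}^{\mathrm{r}}$), giving the desired codimension bound. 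The write-up should closely parallel Lemma \ref{Lemma: Codimgeq2}, and one could even phrase it via Lemma \ref{Lemma: Transverse equivariant} applied to the obvious $G$-action, though here the more direct fibre-bundle description seems simplest.
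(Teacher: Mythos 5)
Your proposal is correct and follows essentially the same strategy as the paper: decompose $(G\times_B\mathfrak{m})\setminus(G\times_B\mathfrak{m}^{\times})$ into the irreducible codimension-one pieces indexed by simple roots and show that $\nu^{-1}(\mathfrak{g}^{\mathrm{rs}})$ meets each one. The only difference is the witness point: the paper produces, for each $\beta\in\Pi$, a regular semisimple element $x_\beta=x+\sum_{\alpha\neq\beta}e_{-\alpha}$ lying in the open stratum $\mathfrak{m}_\beta^{\times}$ (invoking \cite[Lemma 3.1.44]{Chriss} to see $x_\beta\in\mathfrak{g}^{\mathrm{rs}}$), whereas you take $[e:x]$ with $x\in\mathfrak{t}^{\mathrm{r}}\subseteq\mathfrak{b}\subseteq\mathfrak{m}_\alpha$; since the components $G\times_B\mathfrak{m}_\alpha$ are irreducible and $\nu^{-1}(\mathfrak{g}^{\mathrm{rs}})$ is open, nonempty intersection anywhere suffices, so your simpler witness is perfectly adequate.
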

 
\begin{proof}
It suffices to prove that the open subvariety $\nu^{-1}(\g^{\text{rs}})\subseteq G\times_B\mathfrak{m}$ meets each irreducible component of $(G\times_B\mathfrak{m})\setminus (G\times_B\mathfrak{m}^{\times})$. To this end, suppose that $\beta\in\Pi$ is a simple root and define
$$\mathfrak{m}_{\beta}^{\times}:=\mathfrak{b}+\sum_{\alpha\in\Pi\setminus\{\beta\}}(\g_{-\alpha}\setminus\{0\}):=\bigg\{x+\sum_{\alpha\in\Pi\setminus\{\beta\}}c_{\alpha}e_{-\alpha}:x\in\mathfrak{b}\text{ and }c_{\alpha}\in\mathbb{C}^{\times}\text{ for all }\alpha\in\Pi\setminus\{\beta\}\bigg\}.$$ The irreducible components of $(G\times_B\mathfrak{m})\setminus (G\times_B\mathfrak{m}^{\times})$ are then the subvarieties
$$G\times_B\mathfrak{m}_{\beta}^{\times}\subseteq G\times_B\mathfrak{m},\quad\beta\in\Pi.$$ 

Fix $\beta\in\Pi$ and choose an element $x\in\mathfrak{t}^{\text{r}}$. By \cite[Lemma 3.1.44]{Chriss}, the elements
$$x\quad\text{and}\quad x_{\beta}:=x+\sum_{\alpha\in\Pi\setminus\{\beta\}}e_{-\alpha}$$ are in the same adjoint $G$-orbit. It follows that $x_{\beta}\in\g^{\text{rs}}$, while we also observe that $x_{\beta}\in\mathfrak{m}_{\beta}^{\times}$. These considerations imply that $$[e:x_{\beta}]\in (G\times_B\mathfrak{m}_{\beta}^{\times})\cap\nu^{-1}(\g^{\text{rs}}),$$ forcing
$$(G\times_B\mathfrak{m}_{\beta}^{\times})\cap\nu^{-1}(\g^{\text{rs}})\neq\emptyset$$ to hold. In light of the previous paragraph, our proof is complete.
\end{proof} 

We also require the following elementary lemma in order to prove the main result of this section.

\begin{lem}\label{Lemma: Extended gluing isomorphism}
Suppose that $X$ and $Y$ are irreducible varieties. Let $U_1,U_2\subseteq X$ and $V_1,V_2\subseteq Y$ be open subsets, and assume that $U_1$ and $U_2$ are non-empty. Suppose that $f_1:U_1\longrightarrow V_1$ and $f_2:U_2\longrightarrow V_2$ are variety isomorphisms that agree on points in $U_1\cap U_2$, and let $f:U_1\cup U_2\longrightarrow V_1\cup V_2$ be the result of gluing $f_1$ and $f_2$ along $U_1\cap U_2$. Then $f$ is a variety ismomorphism.
\end{lem}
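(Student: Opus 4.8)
The plan is to build a two-sided inverse of $f$ by gluing $f_1^{-1}$ and $f_2^{-1}$; everything is formal once one knows that these partial inverses agree on the overlap $V_1\cap V_2$, so the content of the argument is the relevant density statement.

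First I would record the density facts. Since $X$ is irreducible and $U_1,U_2$ are non-empty open subsets, $U_1\cap U_2$ is non-empty and dense in both $U_1$ and $U_2$. Setting $W:=f_1(U_1\cap U_2)$, the hypothesis that $f_1$ and $f_2$ agree on $U_1\cap U_2$ gives $W=f_2(U_1\cap U_2)$; as $f_1$ and $f_2$ are isomorphisms, $W$ is open and dense in $V_1$ and in $V_2$, so $W\subseteq V_1\cap V_2$ and $V_1\cap V_2$ is open and dense in both $V_1$ and $V_2$, with $W$ dense in $V_1\cap V_2$.

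Next I would glue the inverses. Restricting $f_1^{-1}$ and $f_2^{-1}$ to $V_1\cap V_2$ gives morphisms $V_1\cap V_2\longrightarrow X$ that agree on $W$ (if $w=f_1(u)=f_2(u)$ for $u\in U_1\cap U_2$, then $f_1^{-1}(w)=u=f_2^{-1}(w)$); since $W$ is dense and $X$ is a variety, they agree on all of $V_1\cap V_2$. Hence $f_1^{-1}$ and $f_2^{-1}$ glue to a morphism $g:V_1\cup V_2\longrightarrow X$ with $g\big\vert_{V_i}=f_i^{-1}$ and image $f_1^{-1}(V_1)\cup f_2^{-1}(V_2)=U_1\cup U_2$. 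Since $f\big\vert_{U_i}=f_i$ and $f(U_i)=V_i$, one has $(g\circ f)\big\vert_{U_i}=f_i^{-1}\circ f_i=\mathrm{id}_{U_i}$ and, symmetrically, $(f\circ g)\big\vert_{V_i}=\mathrm{id}_{V_i}$; as $\{U_1,U_2\}$ and $\{V_1,V_2\}$ are open covers of the source and target respectively, it follows that $g\circ f=\mathrm{id}$ and $f\circ g=\mathrm{id}$. Thus $f$ is a variety isomorphism with inverse $g$.

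I do not expect a real obstacle here. The one point needing a little care is the coincidence of $f_1^{-1}$ and $f_2^{-1}$ on $V_1\cap V_2$ in the third paragraph, which relies on the irreducibility of $X$ (to make $U_1\cap U_2$, and hence $W$, dense) and on the standard fact that two morphisms from a reduced variety to a separated variety agreeing on a dense subset are equal. The irreducibility of $Y$ is not actually used.
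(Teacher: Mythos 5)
Your proof is correct and follows essentially the same route as the paper's: glue $f_1^{-1}$ and $f_2^{-1}$ after checking they agree on a dense subset of $V_1\cap V_2$. The only (minor) difference is that you get density of $f_1(U_1\cap U_2)$ in $V_1\cap V_2$ directly from its density in $V_1$, whereas the paper invokes the irreducibility of $V_1\cap V_2$ (hence of $Y$); your observation that the irreducibility of $Y$ is dispensable is accurate.
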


\begin{proof}
Our hypotheses imply that $U_1\cap U_2$ is a non-empty open subset of $X$. It follows that $f_1(U_1\cap U_2)$ is a non-empty open subset of $V_1$. One also has $f_1(U_1\cap U_2)\subseteq V_2$, as $f_1$ and $f_2$ coincide on $U_1\cap U_2$. We conclude that $f_1(U_1\cap U_2)$ is a non-empty open subset of $V_1\cap V_2$.

Now suppose that $y\in f_1(U_1\cap U_2)$. It follows that $$f_1(x)=y=f_2(x)$$ for some $x\in U_1\cap U_2$, so that $$f_1^{-1}(y)=x=f_2^{-1}(y).$$ This establishes that $f_1^{-1}$ and $f_2^{-1}$ coincide on $f_1(U_1\cap U_2)$. The conclusion of the previous paragraph and the irreducibility of $V_1\cap V_2$ now imply that $f_1^{-1}$ and $f_2^{-1}$ agree on $V_1\cap V_2$. We may therefore glue $f_1^{-1}$ and $f_2^{-1}$ along $V_1\cap V_2$ to obtain a morphism $V_1\cup V_2\longrightarrow U_1\cup U_2$. This morphism is easily checked to be an inverse of $f$. Our proof is therefore complete.  
\end{proof}  

The main result of this section is as follows.

\begin{thm}\label{Theorem: First main}
There exists a unique Hamiltonian $G$-variety isomorphism
$$\overline{\rho}^{\circ}:\overline{G\times\mathcal{S}}^{\circ}\overset{\cong}\longrightarrow (G\times_B\mathfrak{m})^{\circ}$$ that extends $\rho:G\times\mathcal{S}\longrightarrow G\times_B\mathfrak{m}^{\times}$.
\end{thm}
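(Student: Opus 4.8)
The plan is to obtain $\overline{\rho}^{\circ}$ by gluing two isomorphisms we already have in hand. Regard $U_1:=G\times\mathcal{S}$ and $U_2:=\overline{\mu}_{\mathcal{S}}^{-1}(\g^{\mathrm{rs}})$ as open subvarieties of $\overline{G\times\mathcal{S}}$, so that $U_1\cup U_2=\overline{G\times\mathcal{S}}^{\circ}$; similarly set $V_1:=G\times_B\mathfrak{m}^{\times}$ and $V_2:=\nu^{-1}(\g^{\mathrm{rs}})$, so that $V_1\cup V_2=(G\times_B\mathfrak{m})^{\circ}$. On $U_1$ I would take the $G$-equivariant symplectomorphism $\rho\colon G\times\mathcal{S}\overset{\cong}\longrightarrow G\times_B\mathfrak{m}^{\times}$ of Proposition \ref{Proposition: Slice log symplectic} (using \eqref{Equation: Restricted symplecto} to view $G\times\mathcal{S}$ as the open dense leaf of $\overline{G\times\mathcal{S}}$), and on $U_2$ the variety isomorphism $\overline{\rho}^{\mathrm{rs}}\colon\overline{\mu}_{\mathcal{S}}^{-1}(\g^{\mathrm{rs}})\overset{\cong}\longrightarrow\nu^{-1}(\g^{\mathrm{rs}})$ built in Remark \ref{Remark: Continuous} by gluing the fibrewise isomorphisms $\overline{\rho}_x$ of Lemma \ref{Lemma: IsoMomentFibres}.

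The first real step is to check that $\rho$ and $\overline{\rho}^{\mathrm{rs}}$ agree on the overlap $U_1\cap U_2=\mu_{\mathcal{S}}^{-1}(\g^{\mathrm{rs}})$, a non-empty open subset of $\overline{G\times\mathcal{S}}$. If $p\in U_1\cap U_2$ lies over $x:=\overline{\mu}_{\mathcal{S}}(p)\in\g^{\mathrm{rs}}$, then the restriction of $\rho$ to the fibre over $x$ is $\rho_x$ and the restriction of $\overline{\rho}^{\mathrm{rs}}$ is $\overline{\rho}_x$; Lemma \ref{Lemma: IsoMomentFibres} says exactly that $\overline{\rho}_x$ extends $\rho_x$, so $\rho(p)=\overline{\rho}^{\mathrm{rs}}(p)$. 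With this in hand, Lemma \ref{Lemma: Extended gluing isomorphism}, applied to the irreducible varieties $\overline{G\times\mathcal{S}}$ and $G\times_B\mathfrak{m}$, yields a variety isomorphism $\overline{\rho}^{\circ}\colon\overline{G\times\mathcal{S}}^{\circ}\overset{\cong}\longrightarrow(G\times_B\mathfrak{m})^{\circ}$ restricting to $\rho$ on $G\times\mathcal{S}$. Uniqueness is then formal: any isomorphism extending $\rho$ agrees with $\overline{\rho}^{\circ}$ on the dense open set $G\times\mathcal{S}$, hence everywhere.

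It remains to promote $\overline{\rho}^{\circ}$ to a Hamiltonian $G$-variety isomorphism, and for this I would use density rather than fresh computation. For $G$-equivariance: for each $g\in G$ the locus where $\overline{\rho}^{\circ}(g\cdot p)=g\cdot\overline{\rho}^{\circ}(p)$ is the equalizer of two morphisms into the separated variety $(G\times_B\mathfrak{m})^{\circ}$, hence closed in $\overline{G\times\mathcal{S}}^{\circ}$; it contains the dense subset $G\times\mathcal{S}$ because $\rho$ is $G$-equivariant, so it is everything. For compatibility with moment maps, $\nu\circ\overline{\rho}^{\circ}=\overline{\mu}_{\mathcal{S}}$ holds on $G\times\mathcal{S}$ by \eqref{Equation: Newest diagram} and on $\overline{\mu}_{\mathcal{S}}^{-1}(\g^{\mathrm{rs}})$ by \eqref{Equation: Newer Newest diagram}, hence on all of $\overline{G\times\mathcal{S}}^{\circ}$. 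For Poisson-compatibility, $\overline{G\times\mathcal{S}}^{\circ}$ and $(G\times_B\mathfrak{m})^{\circ}$ are smooth (being open in the Poisson transversal $\overline{G\times\mathcal{S}}\subseteq T^*\overline{G}(\log D)$, respectively in the vector bundle $G\times_B\mathfrak{m}$ over $G/B$), so $\Lambda^2 T$ is a vector bundle on each; the Poisson bivector of $\overline{G\times\mathcal{S}}^{\circ}$ and the $\overline{\rho}^{\circ}$-pullback of that of $(G\times_B\mathfrak{m})^{\circ}$ are global sections agreeing on the dense open leaf $G\times\mathcal{S}$ (where $\rho$ is a symplectomorphism), hence equal. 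This makes $\overline{\rho}^{\circ}$ a Poisson isomorphism and finishes the proof.

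I expect the only delicate point to be the overlap check --- correctly identifying $G\times\mathcal{S}$ inside $\overline{G\times\mathcal{S}}$ and confirming that under this identification both $\rho$ and the glued map $\overline{\rho}^{\mathrm{rs}}$ send a representative $(\gamma_g,(x,x_{\mathcal{S}}))$ to $[g:x_{\mathcal{S}}]$. But the relevant bookkeeping has essentially been carried out inside the proofs of Lemma \ref{Lemma: IsoMomentFibres} and Remark \ref{Remark: Continuous}, so the present argument is mostly an assembly of existing pieces together with routine irreducibility and density reasoning.
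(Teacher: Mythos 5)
Your proposal is correct and follows essentially the same route as the paper's proof: glue $\rho$ with the fibrewise isomorphism over $\g^{\mathrm{rs}}$ from Remark \ref{Remark: Continuous} via Lemma \ref{Lemma: Extended gluing isomorphism}, using Lemma \ref{Lemma: IsoMomentFibres} for agreement on the overlap, and then deduce equivariance, moment-map compatibility, and the Poisson property by density from the corresponding properties of $\rho$. Your equalizer and bivector-section arguments merely spell out in more detail what the paper dismisses as straightforward.
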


\begin{proof}
Uniqueness is an immediate consequence of $G\times\mathcal{S}$ being dense in $\overline{G\times\mathcal{S}}^{\circ}$. To establish existence, recall the variety isomorphism
\begin{equation}\label{Equation: Continuous bijection}\overline{\mu}_{\mathcal{S}}^{-1}(\g^{\text{rs}})\overset{\cong}\longrightarrow\nu^{-1}(\g^{\text{rs}})\end{equation} discussed in Remark \ref{Remark: Continuous}. Lemma \ref{Lemma: IsoMomentFibres} implies that this isomorphism coincides with $\rho$ on the overlap $(G\times\mathcal{S})\cap\overline{\mu}_{\mathcal{S}}^{-1}(\g^{\text{rs}})$. We may therefore apply Lemma \ref{Lemma: Extended gluing isomorphism} to \eqref{Equation: Continuous bijection} and $\rho$, where the latter is regarded as an isomorphism onto its image $\rho(G\times\mathcal{S})=G\times_B\mathfrak{m}^{\times}$. It follows that $\rho$ extends to a variety isomorphism
$$\overline{G\times \mathcal S}^{\circ} = (G\times\mathcal{S})\cup\overline{\mu}_{\mathcal{S}}^{-1}(\g^{\text{rs}})\overset{\overline{\rho}^{\circ}}\longrightarrow (G\times_B\mathfrak{m}^{\times})\cup\nu^{-1}(\mathfrak{g}^{\text{rs}})=(G\times_B\mathfrak{m})^{\circ}.$$ The commutative diagrams \eqref{Equation: Newest diagram} and \eqref{Equation: Newer Newest diagram} imply that $\overline{\rho}^{\circ}$ intertwines the restrictions of $\overline{\mu}_{\mathcal{S}}$ and $\nu$ to $\overline{G\times\mathcal{S}}^{\circ}$ and $(G\times_B\mathfrak{m})^{\circ}$, respectively. 

It remains to establish that $\overline{\rho}^{\circ}$ is $G$-equivariant and Poisson. The former is a straightforward consequence of $\rho$ being $G$-equivariant and $G\times\mathcal{S}$ being dense in $\overline{G\times\mathcal{S}}$. The latter follows from the fact that $\rho$ defines a symplectomorphism between the open dense symplectic leaves $G\times\mathcal{S}\subseteq\overline{G\times\mathcal{S}}$ and $G\times_B\mathfrak{m}^{\times}\subseteq G\times_B\mathfrak{m}$ (see Proposition \ref{Proposition: Slice log symplectic}). Our proof is therefore complete.
\end{proof}

\begin{rem}
In light of Lemmas \ref{Lemma: Codimgeq2} and \ref{Lemma: Next extension}, one has the following coarser reformulation of Theorem \ref{Theorem: First main}: $\rho$ extends to an isomorphism $\overline{G\times\mathcal{S}}\overset{\cong}\longrightarrow G\times_B\mathfrak{m}$ in codimension one. This observation features prominently in proof of Theorem \ref{Theorem: GxBHKostantWhittaker}.
\end{rem}

\begin{rem}
Consider the restricted moment maps
$$\overline{\mu}_{\mathcal{S}}^{\circ}:=\overline{\mu}_{\mathcal{S}}\bigg\vert_{\overline{G\times\mathcal{S}}^{\circ}}\quad\text{and}\quad \nu^{\circ}:=\nu\bigg\vert_{(G\times_B\mathfrak{m})^{\circ}},$$ as well as the Hamiltonian $G$-variety isomorphism $\overline{\rho}^{\circ}:\overline{G\times\mathcal{S}}^{\circ}\longrightarrow (G\times_B\mathfrak{m})^{\circ}$. Theorem
\ref{Theorem: First main} tells us that 
\begin{equation}\label{Equation: Prelim diag}\begin{tikzcd}
\overline{G\times\mathcal{S}}^{\circ} \arrow{rr}{\overline{\rho}^{\circ}} \arrow[swap]{dr}{\overline{\mu}_{\mathcal{S}}^{\circ}}& & (G\times_B\mathfrak{m})^{\circ} \arrow{dl}{\nu^{\circ}}\\
& \mathfrak{g} &  
\end{tikzcd}\end{equation}
commutes, and pulling back along the inclusion $\mathcal{S}\hookrightarrow\g$ yields a commutative diagram
$$\begin{tikzcd}
\overline{G\times\mathcal{S}}^{\circ}\cap\overline{\mu}_{\mathcal{S}}^{-1}(\mathcal{S})=(\overline{\mu}_{\mathcal{S}}^{\circ})^{-1}(\mathcal{S}) \arrow{rr}{} \arrow[start anchor={[xshift=-1.0ex]}, shorten >=6mm,
end anchor={[yshift=-2.5ex]north east}]{dr}{}& & (\nu^{\circ})^{-1}(\mathcal{S})=(G\times_B\mathfrak{m})^{\circ}\cap\nu^{-1}(\mathcal{S}) \arrow[start anchor={[xshift=-5.0ex]}, shorten >=6mm,
end anchor={[yshift=0.9ex]south west}]{dl}{}\\
& \mathcal{S} & 
\end{tikzcd}.$$
The horizontal arrow is a Poisson variety isomorphism between the Poisson slices $(\overline{\mu}_{\mathcal{S}}^{\circ})^{-1}(\mathcal{S})\subseteq\overline{G\times\mathcal{S}}^{\circ}$ and $(\nu^{\circ})^{-1}(\mathcal{S})\subseteq (G\times_B\mathfrak{m})^{\circ}$, as follows from $\overline{\rho}^{\circ}$ being a Poisson variety isomorphism. On the other hand,
\cite[Example 5.5]{CrooksRoeserSlice} explains that $\overline{\mu}_{\mathcal{S}}^{-1}(\mathcal{S})\longrightarrow\mathcal{S}$ is Balib\u{a}nu's fibrewise compactified universal centralizer $\overline{\mathcal{Z}_{\g}}\longrightarrow\mathcal{S}$. Our pullback diagram thereby becomes
$$\begin{tikzcd}
\overline{G\times\mathcal{S}}^{\circ}\cap\overline{\mathcal{Z}_{\g}} \arrow{rr}{\cong} \arrow[swap]{dr}{}& & (G\times_B\mathfrak{m})^{\circ}\cap\nu^{-1}(\mathcal{S}) \arrow{dl}{}\\
& \mathcal{S} & 
\end{tikzcd}.$$
The horizontal arrow is precisely the restriction of Balib\u{a}nu's Poisson isomorphism 
$$\overline{\mathcal{Z}_{\g}}\overset{\cong}\longrightarrow\nu^{-1}(\mathcal{S})$$
to $\overline{G\times\mathcal{S}}^{\circ}\cap\overline{\mathcal{Z}_{\g}}$, as follows from comparing the proofs of Theorem \ref{Theorem: First main} and \cite[Proposition 4.8]{BalibanuUniversal}.   
\end{rem}

\subsection{The bimeromorphism}
Let us briefly recall Remmert's notion of a meromorphic map \cite{Remmert}, as well as several related concepts. To this end, suppose that $X$ and $Y$ are complex manifolds. We also suppose that $f:X\longrightarrow Y$ is a set-theoretic relation, specified by a subset
$$\mathrm{graph}(f)\subseteq X\times Y.$$ One then calls $f$ a \textit{meromorphic map} if the following conditions are satisfied:
\begin{itemize}
\item[(i)] $\mathrm{graph}(f)$ is an analytic subset of $X\times Y$;
\item[(ii)] there exists an open dense subset $Z\subseteq X$ for which $f\big\vert_Z:Z\longrightarrow Y$ is a holomorphic map and $$\mathrm{graph}(f)=\overline{\mathrm{graph}(f\big\vert_{Z})}$$ in $X\times Y$;
\item[(iii)] the projection $X\times Y\longrightarrow X$ restricts to a proper map $\mathrm{graph}(f)\longrightarrow X$. 
\end{itemize}

Suppose that $X$ and $Y$ come equipped with holomorphic $G$-actions. We then refer to a meromorphic map $f:X\longrightarrow Y$ as being $G$-equivariant if $\mathrm{graph}(f)$ is invariant under the diagonal $G$-action on $X\times Y$.
 
Let $f:X\longrightarrow Y$ and $h:Y\longrightarrow Z$ be meromorphic maps. Declare $(f,h)$ to be composable if there exists an open dense subset $W\subseteq Y$ such that $h\big\vert_{W}:W\longrightarrow Z$ is a holomorphic map and $$f^{-1}(W):=\{x\in X:(x,y)\in\mathrm{graph}(f)\text{ for some }y\in W\}$$ is dense in $X$. The composite relation $$h\circ f:X\longrightarrow Z$$ is then a meromorphic map. One calls $f$ a \textit{bimeromorphism} if there exists a meromorphic map $\ell:Y\longrightarrow X$ such that $(f,\ell)$ and $(\ell,f)$ are composable and $$\ell\circ f:X\longrightarrow X\quad\text{and}\quad f\circ\ell:Y\longrightarrow Y$$ are the identity relations.   

We may now formulate and prove the main result of this section. 

\begin{thm}\label{Theorem: GxBHKostantWhittaker}
There exists a unique bimeromorphism
$$\overline{\rho}:\overline{G\times\mathcal S}\overset{\cong}\longrightarrow G\times_B\mathfrak{m}$$ that extends $\overline{\rho}^{\circ}$. This bimeromorphism is $G$-equivariant and makes
\begin{equation}\label{Equation: Newer diagram}\begin{tikzcd}
\overline{G\times\mathcal{S}} \arrow{rr}{\overline{\rho}} \arrow[swap]{dr}{\overline{\mu}_{\mathcal{S}}}& & G\times_B\mathfrak{m} \arrow{dl}{\nu}\\
& \mathfrak{g} &  
\end{tikzcd}
\end{equation}
commute.
\end{thm}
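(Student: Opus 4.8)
The plan is to produce the bimeromorphism by taking the closure of the graph of the isomorphism $\overline{\rho}^{\circ}$ already constructed in Theorem \ref{Theorem: First main}, and then to verify the three defining conditions of a meromorphic map together with $G$-equivariance and compatibility with the moment maps. First I would set $U:=\overline{G\times\mathcal{S}}^{\circ}$ and $V:=(G\times_B\mathfrak{m})^{\circ}$, recall from Lemmas \ref{Lemma: Codimgeq2} and \ref{Lemma: Next extension} that the complements of $U$ and $V$ have codimension at least two in the respective ambient varieties, and define $\mathrm{graph}(\overline{\rho})$ to be the closure of $\mathrm{graph}(\overline{\rho}^{\circ}\colon U\longrightarrow G\times_B\mathfrak{m})$ inside $\overline{G\times\mathcal{S}}\times (G\times_B\mathfrak{m})$. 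Condition (i) (that the graph is analytic) is automatic, since the closure of a locally closed algebraic subset is algebraic, hence analytic; condition (ii) holds with $Z:=U$, which is open and dense in $\overline{G\times\mathcal{S}}$ by Lemma \ref{Lemma: Codimgeq2}.

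The crux is condition (iii): the projection $\mathrm{pr}_1\colon\mathrm{graph}(\overline{\rho})\longrightarrow\overline{G\times\mathcal{S}}$ must be proper. Here the key structural input is Remark \ref{Remark: Projective fibres} and Remark \ref{Remark: Closed embedding}, which realize $\overline{\mu}_{\mathcal{S}}$ and $\nu$ as fibrewise compactifications with projective fibres: concretely, $(\pi_{\mathcal{S}},\overline{\mu}_{\mathcal{S}})$ embeds $\overline{G\times\mathcal{S}}$ as a closed subvariety of $\overline{G}\times\g$, and $(\pi_{\mathfrak{m}},\nu)$ embeds $G\times_B\mathfrak{m}$ as a closed subvariety of $G/B\times\g$. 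Since $\overline{\rho}^{\circ}$ intertwines $\overline{\mu}_{\mathcal{S}}$ and $\nu$ by Theorem \ref{Theorem: First main}, the graph sits inside the fibre product $\overline{G\times\mathcal{S}}\times_{\g}(G\times_B\mathfrak{m})$, which in turn is a closed subvariety of $\overline{G}\times G/B\times\g$ fibred over $\g$ with projective fibres; hence $\overline{G\times\mathcal{S}}\times_{\g}(G\times_B\mathfrak{m})\longrightarrow\overline{G\times\mathcal{S}}$ is proper (a base change of the projective morphism $G/B\times\g\longrightarrow\g$), and its restriction to the closed subvariety $\mathrm{graph}(\overline{\rho})$ is therefore proper as well. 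This establishes that $\overline{\rho}$ is a meromorphic map; symmetry of the argument (exchanging the roles of the two sides, using that $\overline{\rho}^{\circ}$ is an isomorphism) shows that the inverse relation is also meromorphic, that $(\overline{\rho},\overline{\rho}^{-1})$ and $(\overline{\rho}^{-1},\overline{\rho})$ are composable with $W$ taken to be $V$ (resp. $U$), and that the composites are the identity relations, so $\overline{\rho}$ is a bimeromorphism.

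For the remaining assertions: $G$-equivariance of $\overline{\rho}$ follows because $\mathrm{graph}(\overline{\rho}^{\circ})$ is invariant under the diagonal $G$-action (Theorem \ref{Theorem: First main} states $\overline{\rho}^{\circ}$ is a $G$-variety isomorphism), and the diagonal $G$-action on $\overline{G\times\mathcal{S}}\times(G\times_B\mathfrak{m})$ is continuous, so the closure of the graph is $G$-invariant too. Commutativity of the diagram \eqref{Equation: Newer diagram} is checked on the dense open subset $U$, where it reduces to the corresponding statement for $\overline{\rho}^{\circ}$ in \eqref{Equation: Prelim diag}: the two morphisms $\overline{\mu}_{\mathcal{S}}\circ\mathrm{pr}_1$ and $\nu\circ\mathrm{pr}_2$ from $\mathrm{graph}(\overline{\rho})$ to $\g$ agree on the dense subset corresponding to $U$, hence agree everywhere by separatedness of $\g$, which is exactly the assertion that \eqref{Equation: Newer diagram} commutes. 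Finally, uniqueness is immediate: any bimeromorphism extending $\overline{\rho}^{\circ}$ must have graph containing $\mathrm{graph}(\overline{\rho}^{\circ})$, hence containing its closure, and an irreducible analytic set of the correct dimension containing $\mathrm{graph}(\overline{\rho})$ must equal it. The main obstacle I anticipate is purely bookkeeping around condition (iii): one must be careful that the fibre-product description genuinely yields properness over $\overline{G\times\mathcal{S}}$ and not merely over $\g$, and that taking analytic (as opposed to Zariski) closures does not enlarge the graph — both of which follow from the projectivity of the fibres of $\nu$ established in Remark \ref{Remark: Closed embedding}, so no essentially new idea is required.
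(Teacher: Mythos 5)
Your proposal is correct, and it reaches the conclusion by a genuinely different route from the paper. The paper composes $\overline{\rho}^{\circ}$ with the closed embedding $(\pi,\nu):G\times_B\mathfrak{m}\hookrightarrow G/B\times\g$ and extends component by component: the $G/B$-component is extended meromorphically across the complement of $\overline{G\times\mathcal{S}}^{\circ}$ by invoking a Hartogs-type extension theorem for maps into compact targets (\cite[Theorem 3.42]{McKay}), which is precisely where the codimension-$\geq 2$ estimates of Lemmas \ref{Lemma: Codimgeq2} and \ref{Lemma: Next extension} are consumed, while the $\g$-component is already globally defined as $\overline{\mu}_{\mathcal{S}}$; one must then argue separately that the resulting product meromorphic map lands in the image of $(\pi,\nu)$, and run the mirror argument with $(\pi_{\mathcal{S}},\overline{\mu}_{\mathcal{S}})$ for the inverse. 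You instead close up the graph directly and verify Remmert's axioms, obtaining properness of $\mathrm{pr}_1$ on the graph closure from the fact that the graph sits inside the fibre product $\overline{G\times\mathcal{S}}\times_{\g}(G\times_B\mathfrak{m})$, which is proper over $\overline{G\times\mathcal{S}}$ by base change of the projective morphism $\nu$ (and symmetrically from $\overline{\mu}_{\mathcal{S}}$ for the inverse relation). Your argument is more self-contained — it avoids the external extension theorem entirely and, notably, uses only density of $\overline{G\times\mathcal{S}}^{\circ}$ and $(G\times_B\mathfrak{m})^{\circ}$ rather than their codimension-two complements — at the cost of some care with the points you yourself flag (that the analytic closure of the algebraic graph agrees with its Zariski closure, and that the fibre over a point of $U$ is a single point, which follows from continuity of $\overline{\rho}^{\circ}$). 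The paper's component-wise setup is not wasted, however: the same McKay-style extension machinery (\cite[Theorem 3.48]{McKay}) and the codimension-two lemmas are reused to upgrade the extension to a local biholomorphism in the proof of Theorem \ref{Theorem: sl2}, so the two approaches are complementary rather than one strictly superseding the other.
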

\begin{proof}
Uniqueness is an immediate consequence of $G\times\mathcal{S}$ being dense in $\overline{G\times\mathcal{S}}$. 

Now recall the $G$-equivariant closed embedding
\begin{equation}\label{Equation: Nice closed embedding}(\pi,\nu):G\times_B\mathfrak{m}\longrightarrow G/B\times\g\end{equation} defined in Remark \ref{Remark: Closed embedding}, and form the $G$-equivariant composite map
$$\epsilon:=(\pi\circ\overline{\rho}^{\circ},\nu\circ\overline{\rho}^{\circ}):\overline{G\times\mathcal{S}}^{\circ}\longrightarrow G/B\times\g.$$
Lemma \ref{Lemma: Codimgeq2} and \cite[Theorem 3.42]{McKay} imply that the first component $\pi\circ\overline{\rho}^{\circ}$ extends to a meromorphic map $$\widetilde{\pi\circ\overline{\rho}^{\circ}}:\overline{G\times\mathcal{S}}\longrightarrow G/B.$$ On the other hand, \eqref{Equation: Newest diagram} and \eqref{Equation: Newer Newest diagram} tell us that $\overline{\mu}_{\mathcal{S}}$ extends the second component $\nu\circ\overline{\rho}^{\circ}$ to $\overline{G\times\mathcal{S}}$.

Let us consider the product meromorphic map
$$\widetilde{\epsilon}:=(\widetilde{\pi\circ\overline{\rho}^{\circ}},\overline{\mu}_{\mathcal{S}}):\overline{G\times\mathcal{S}}\longrightarrow G/B\times\mathfrak{g},$$ i.e. the meromorphic map with graph
$$\mathrm{graph}(\widetilde{\epsilon})=\{(\alpha,([g],x))\in (\overline{G\times\mathcal{S}})\times (G/B\times\g):(\alpha,[g])\in\mathrm{graph}(\widetilde{\pi\circ\overline{\rho}^{\circ}})\text{ and }\overline{\mu}_{\mathcal{S}}(\alpha)=x\}.$$ The last two sentences of the preceding paragraph imply that $\widetilde{\epsilon}$ extends $\epsilon$. This amounts to having an inclusion
$$\mathrm{graph}(\epsilon)\subseteq\mathrm{graph}(\widetilde{\epsilon}).$$ The irreducibility of $\overline{G\times\mathcal{S}}$ (see \cite[Theorem 3.14]{CrooksRoeserSlice}) and \cite[Remark 2.3]{Biliotti} also force
$\mathrm{graph}(\widetilde{\epsilon})$ to be irreducible, and we note that
$$\dim(\mathrm{graph}(\epsilon))=\dim(\overline{G\times\mathcal{S}}^{\circ})=\dim(\overline{G\times\mathcal{S}})=\dim(\mathrm{graph}(\widetilde{\epsilon})).$$ It follows that
$$\mathrm{graph}(\widetilde{\epsilon})=\overline{\mathrm{graph}(\epsilon)},$$where the closure is taken in $(\overline{G\times\mathcal{S}})\times (G/B\times\g)$. This combines with the $G$-equivariance of $\epsilon$ to imply that $\widetilde{\epsilon}$ is $G$-equivariant. Another consequence is that
\begin{equation}\label{Equation: Nice inclusion}\mathrm{pr}(\mathrm{graph}(\widetilde{\epsilon}))\subseteq\overline{\mathrm{pr}(\mathrm{graph}(\epsilon))},\end{equation}
where $$\mathrm{pr}:(\overline{G\times\mathcal{S}})\times (G/B\times\g)\longrightarrow G/B\times\g$$ is the obvious projection.
We also know that $\mathrm{pr}(\mathrm{graph}(\epsilon))$ is the image of $\epsilon$, and that the latter contains the image of $\rho(G\times\mathcal{S})$ under the closed embedding \eqref{Equation: Nice closed embedding}. Since $\rho(G\times\mathcal{S})$ is dense in $G\times_B\mathfrak{m}$, this implies that $\overline{\mathrm{pr}(\mathrm{graph}(\epsilon))}$ is the image of \eqref{Equation: Nice closed embedding}. The inclusion \eqref{Equation: Nice inclusion} thus forces $\mathrm{pr}(\mathrm{graph}(\widetilde{\epsilon}))$ to be contained in the image of \eqref{Equation: Nice closed embedding}, onto which \eqref{Equation: Nice closed embedding} is a $G$-equivariant isomorphism. One may thereby regard $\widetilde{\epsilon}$ as a $G$-equivariant meromorphic map
$$\overline{\rho}:\overline{G\times\mathcal{S}}\longrightarrow G\times_B\mathfrak{m}.$$ The fact that $\widetilde{\epsilon}$ extends $\epsilon$ then tells us that $\overline{\rho}$ extends $\overline{\rho}^{\circ}$. In particular, $\overline{\rho}$ is a meromorphic extension of $\rho$.

We now construct an extension of the inverse
$$(\overline{\rho}^{\circ})^{-1}:(G\times_B\mathfrak{m})^{\circ}\longrightarrow\overline{G\times\mathcal{S}}^{\circ}$$ to a meromorphic map $G\times_B\mathfrak{m}\longrightarrow\overline{G\times\mathcal{S}}$. To this end, recall the closed embedding
$$(\pi_{\mathcal{S}},\overline{\mu}_{\mathcal{S}}):\overline{G\times\mathcal{S}}\longrightarrow\overline{G}\times\g$$ discussed in Remark \ref{Remark: Projective fibres}. One has the composite map
$$\vartheta:=(\pi_{\mathcal{S}}\circ(\overline{\rho}^{\circ})^{-1},\overline{\mu}_{\mathcal{S}}\circ(\overline{\rho}^{\circ})^{-1}):(G\times_B\mathfrak{m})^{\circ}\longrightarrow\overline{G}\times\g.$$ By Lemma \ref{Lemma: Next extension} and \cite[Theorem 3.42]{McKay}, the first component $\pi_{\mathcal{S}}\circ(\overline{\rho}^{\circ})^{-1}$ extends to a meromorphic map
$$\widetilde{\pi_{\mathcal{S}}\circ(\overline{\rho}^{\circ})^{-1}}:G\times_B\mathfrak{m}\longrightarrow\overline{G}.$$ The second component $\overline{\mu}_{\mathcal{S}}\circ(\overline{\rho}^{\circ})^{-1}$ extends to $\nu$ on $G\times_B\mathfrak{m}$, as follows from our earlier observation that $\overline{\mu}_{\mathcal{S}}$ extends $\nu\circ\overline{\rho}^{\circ}$.

Now consider the product meromorphic map
$$\widetilde{\vartheta}:=(\widetilde{\pi_{\mathcal{S}}\circ(\overline{\rho}^{\circ})^{-1}},\nu):G\times_B\mathfrak{m}\longrightarrow\overline{G}\times\g,$$ i.e. the meromorphic map with graph
$$\mathrm{graph}(\widetilde{\vartheta})=\{(\alpha,(\gamma,x))\in (G\times_B\mathfrak{m})\times (\overline{G}\times\g):(\alpha,\gamma)\in\mathrm{graph}(\widetilde{\pi_{\mathcal{S}}\circ(\overline{\rho}^{\circ})^{-1}})\text{ and }\nu(\alpha)=x\}.$$ One can then use direct analogues of arguments made about $\widetilde{\epsilon}$ to establish the following: $\widetilde{\vartheta}$ extends $\vartheta$, and it may be regarded as a meromorphic map $$\overline{\rho^{-1}}:G\times_B\mathfrak{m}\longrightarrow\overline{G\times\mathcal{S}}$$ that extends $(\overline{\rho}^{\circ})^{-1}$ and $\rho^{-1}$. Since $\overline{\rho}$ extends $\rho$, one deduces that $\overline{\rho}$ and $\overline{\rho^{-1}}$ are inverses.

It remains only to establish that \eqref{Equation: Newer diagram} commutes. We begin by interpreting commutativity in \eqref{Equation: Newest diagram} as the statement that
$$\mathrm{graph}(\nu\circ\rho)=\mathrm{graph}(\overline{\mu}_{\mathcal{S}}\big\vert_{G\times\mathcal{S}}).$$ On the other hand, it is straightforward to establish that
$$\mathrm{graph}(\nu\circ\overline{\rho})=\overline{\mathrm{graph}(\nu\circ\rho)}\subseteq(\overline{G\times\mathcal{S}})\times\g\quad\text{and}\quad\mathrm{graph}(\overline{\mu}_{\mathcal{S}})=\overline{\mathrm{graph}({\overline{\mu}_{\mathcal{S}}\big\vert_{G\times\mathcal{S}}})}\subseteq (\overline{G\times\mathcal{S}})\times\g.$$ These last two sentences imply that
$$\mathrm{graph}(\nu\circ\overline{\rho})=\mathrm{graph}(\overline{\mu}_{\mathcal{S}}),$$ i.e. \eqref{Equation: Newer diagram} commutes. This completes the proof. 
\end{proof}

\subsection{The fibre of $\overline{\mu}_{\mathcal{S}}$ over $0$}
One is naturally motivated to improve Theorem \ref{Theorem: GxBHKostantWhittaker}. Section \ref{Subsection: sl2} represents a step in this direction, and it uses the next few results. To this end, recall notation, conventions, and facts discussed in Sections \ref{Subsection: Lie-theoretic conventions} and \ref{Subsection: Wonderful}.

\begin{prop}\label{Proposition: Null fibre}
Suppose that $\gamma\in\overline{G}$. One then has
$$(0,\xi)\in\gamma\Longleftrightarrow \gamma\in (G\times B^{-})\mathfrak{b}\times_{\mathfrak{t}}\mathfrak{b}^{-}.$$
\end{prop}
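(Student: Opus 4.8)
The plan is to analyze the incidence condition $(0,\xi)\in\gamma$ by means of the $(G\times G)$-orbit stratification of $\overline{G}$ recalled in Section~\ref{Subsection: Wonderful}, reducing it to a statement about regular nilpotent elements lying in nilradicals of parabolic subalgebras. The forward implication is the substantive one; the converse will be a short direct check.

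First I would write a given $\gamma\in\overline{G}$ as $\gamma=(g_1,g_2)\cdot(\p_I\times_{\lf_I}\p_I^-)$ for suitable $I\subseteq\Pi$ and $g_1,g_2\in G$. Unwinding \eqref{Equation: Basepoint}, the condition $(0,\xi)\in\gamma$ is equivalent to $(0,\Ad_{g_2^{-1}}(\xi))\in\p_I\times_{\lf_I}\p_I^-$; since $0\in\p_I$ has trivial $\lf_I$-projection, this amounts to $\Ad_{g_2^{-1}}(\xi)$ lying in $\p_I^-$ with trivial $\lf_I$-projection, i.e.\ to $\Ad_{g_2^{-1}}(\xi)\in\uu_I^-$. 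As $\xi$ is regular nilpotent, so is $\Ad_{g_2^{-1}}(\xi)$, and the next step is to observe that $\uu_I^-$ contains a regular nilpotent only when $I=\emptyset$. Writing $P_I^-$ for the parabolic subgroup with Lie algebra $\p_I^-$, the orbit $P_I^-\cdot\Ad_{g_2^{-1}}(\xi)$ lies in the $\Ad(P_I^-)$-stable ideal $\uu_I^-$, so
$$\dim\uu_I^-\ \geq\ \dim\big(P_I^-\cdot\Ad_{g_2^{-1}}(\xi)\big)\ \geq\ \dim P_I^- - \ell\ =\ \dim\lf_I+\dim\uu_I^- - \ell,$$
using that the $P_I^-$-stabilizer of $\Ad_{g_2^{-1}}(\xi)$ sits inside its $G$-stabilizer, which is $\ell$-dimensional. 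This forces $\dim\lf_I\leq\ell$, hence $\lf_I=\mathfrak{t}$ and $I=\emptyset$, so that $\gamma=(g_1,g_2)\cdot(\bb\times_{\mathfrak{t}}\bb^-)$ with $\Ad_{g_2^{-1}}(\xi)\in\uu^-$. I expect this dimension count --- the reduction to the closed orbit --- to be the main point; the remaining steps invoke only standard facts about the principal nilpotent.

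To conclude the forward implication I would use two such facts: the regular nilpotent elements of $\uu^-$ form a single $B^-$-orbit (a theorem of Kostant), and $\g_\xi\subseteq\uu^-$ (immediate since each simple root $\gamma$ satisfies $\gamma(h)=-2$, because $e_{-\gamma}$ appears in $\xi$ and $[h,\xi]=2\xi$, so every positive root is $\ad_h$-negative while $\g_\xi$ is spanned by $\ad_h$-positive vectors). Since $\xi\in\uu^-$ is regular nilpotent, the first fact produces $b^-\in B^-$ with $\Ad_{g_2^{-1}}(\xi)=\Ad_{b^-}(\xi)$, so $g_2b^-\in G_\xi$; the second fact, together with the connectedness of $G_\xi$ (valid as $G$ is adjoint), gives $G_\xi\subseteq U^-\subseteq B^-$, whence $g_2=(g_2b^-)(b^-)^{-1}\in B^-$ and $\gamma\in(G\times B^-)\cdot(\bb\times_{\mathfrak{t}}\bb^-)$. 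For the converse, if $\gamma=(g,b^-)\cdot(\bb\times_{\mathfrak{t}}\bb^-)$ with $g\in G$ and $b^-\in B^-$, then $\Ad_{(b^-)^{-1}}(\xi)\in\uu^-$ because $B^-$ normalizes $\uu^-$; hence $\Ad_{(b^-)^{-1}}(\xi)$ lies in $\bb^-$ with trivial $\mathfrak{t}$-projection, so $(0,\Ad_{(b^-)^{-1}}(\xi))\in\bb\times_{\mathfrak{t}}\bb^-$, i.e.\ $(0,\xi)\in\gamma$. This closes the equivalence.
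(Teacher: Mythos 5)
Your proof is correct, and its skeleton matches the paper's: write $\gamma=(g_1,g_2)\cdot\p_I\times_{\lf_I}\p_I^-$, translate $(0,\xi)\in\gamma$ into $\Ad_{g_2^{-1}}(\xi)\in\uu_I^-$, force $I=\emptyset$, and then pin down $g_2\in B^-$. The difference lies in how you justify the two substantive steps. For $I=\emptyset$, the paper simply cites Kostant's theorem that a nilradical $\uu_I^-$ contains no regular element unless $I=\emptyset$; your dimension count via the $P_I^-$-orbit inside the $\Ad(P_I^-)$-stable ideal $\uu_I^-$ is a clean, self-contained proof of exactly that fact, so you gain independence from the reference at the cost of a few lines. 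For $g_2\in B^-$, the paper argues more directly: from $\Ad_{g_2^{-1}}(\xi)\in\uu^-\subseteq\bb^-$ one gets $\xi\in\Ad_{g_2}(\bb^-)$, and since a regular nilpotent lies in a unique Borel subalgebra, $\Ad_{g_2}(\bb^-)=\bb^-$ and $g_2\in N_G(B^-)=B^-$. Your route through the single $B^-$-orbit of regular nilpotents in $\uu^-$, the inclusion $\g_\xi\subseteq\uu^-$, and the connectedness of $G_\xi$ (which does require $G$ adjoint, as you note) reaches the same conclusion but leans on three facts where one suffices; it is correct, just less economical. The converse direction is identical in both arguments.
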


\begin{proof}
To prove the forward implication, we assume that $(0,\xi)\in\gamma$. One may write $$\gamma=(g_1,g_2)\cdot\mathfrak{p}_I\times_{\mathfrak{l}_I}\mathfrak{p}_I^{-}$$ for suitable elements $(g_1,g_2)\in G\times G$ and a subset $I\subseteq\Pi$. It follows that
$$(0,\mathrm{Ad}_{g_2^{-1}}(\xi))\in\mathfrak{p}_I\times_{\mathfrak{l}_I}\mathfrak{p}_I^{-},$$ or equivalently that
$$\mathrm{Ad}_{g_2^{-1}}(\xi)\in\mathfrak{u}_I^{-}.$$ This combines with \cite[Theorem 5.3]{KostantTDS} and the fact that $\mathrm{Ad}_{g_2^{-1}}(\xi)\in\g^{\text{r}}$ to imply that $I=\emptyset$. We deduce that
$$\gamma=(g_1,g_2)\cdot\mathfrak{b}\times_{\mathfrak{t}}\mathfrak{b}^{-}\quad\text{and}\quad\xi\in\mathrm{Ad}_{g_2}(\mathfrak{b}^{-}).$$ On the other hand, $\mathfrak{b}^{-}$ is the unique Borel subalgebra of $\g$ that contains $\xi$. This implies that $$\mathrm{Ad}_{g_2}(\mathfrak{b}^{-})=\mathfrak{b}^{-},$$ i.e. $g_2\in B^{-}$. We therefore have
$$\gamma=(g_1,g_2)\cdot\mathfrak{b}\times_{\mathfrak{t}}\mathfrak{b}^{-}\in (G\times B^{-})\mathfrak{b}\times_{\mathfrak{t}}\mathfrak{b}^{-}.$$

Now assume that $\gamma\in (G\times B^{-})\mathfrak{b}\times_{\mathfrak{t}}\mathfrak{b}^{-}$, i.e. that
$$\gamma=(g,b)\cdot\mathfrak{b}\times_{\mathfrak{t}}\mathfrak{b}^{-}$$ for some $g\in G$ and $b\in B^{-}$. Since $$\mathrm{Ad}_{b^{-1}}(\xi)\in\mathfrak{u}^{-},$$ we must have
$$(0,\mathrm{Ad}_{b^{-1}}(\xi))\in\mathfrak{b}\times_{\mathfrak{t}}\mathfrak{b}^{-}.$$ This amounts to the statement that
$$(0,\xi)\in (g,b)\cdot\mathfrak{b}\times_{\mathfrak{t}}\mathfrak{b}^{-}=\gamma,$$ completing the proof.
\end{proof}

\begin{cor}\label{Corollary: Flag variety}
There is an isomorphism of varieties
$$\overline{\mu}_{\mathcal{S}}^{-1}(0)\cong G/B.$$
\end{cor}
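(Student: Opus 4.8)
The plan is to reduce the statement to Proposition \ref{Proposition: Null fibre} together with a routine orbit--stabiliser computation in the closed $(G\times G)$-orbit of $\overline{G}$. First I would identify the point $0_{\mathcal{S}}\in\mathcal{S}$. Since $\mathcal{S}=\xi+\g_{\eta}$ contains $\xi$, and $\xi$ is nilpotent (it is the nilpositive element of the principal $\mathfrak{sl}_2$-triple $\tau$), we have $\chi(\xi)=\chi(0)$, so that $0_{\mathcal{S}}=\xi$. The variety isomorphism at the end of Remark \ref{Remark: Projective fibres} then specialises to an isomorphism
$$\overline{\mu}_{\mathcal{S}}^{-1}(0)\overset{\cong}\longrightarrow\{\gamma\in\overline{G}:(0,\xi)\in\gamma\},$$
and by Proposition \ref{Proposition: Null fibre} the right-hand side equals the $(G\times B^-)$-orbit of $\mathfrak{b}\times_{\mathfrak{t}}\mathfrak{b}^-$ in $\overline{G}$. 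It therefore suffices to prove that this orbit is isomorphic to $G/B$.

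The next step is to observe that the extra $B^-$-factor is harmless. Applying the description \eqref{Equation: StabilisermI} of the $(G\times G)$-stabiliser of $\mathfrak{p}_I\times_{\mathfrak{l}_I}\mathfrak{p}_I^-$ with $I=\emptyset$ (so that $L_\emptyset=T$, $U_\emptyset=U$, $U_\emptyset^-=U^-$, and $Z(L_\emptyset)=T$), one finds that the $(G\times G)$-stabiliser of $\mathfrak{b}\times_{\mathfrak{t}}\mathfrak{b}^-$ is exactly $B\times B^-$. In particular $\{e\}\times B^-$ fixes $\mathfrak{b}\times_{\mathfrak{t}}\mathfrak{b}^-$, which gives
$$(G\times B^-)\cdot(\mathfrak{b}\times_{\mathfrak{t}}\mathfrak{b}^-)=(G\times\{e\})\cdot(\mathfrak{b}\times_{\mathfrak{t}}\mathfrak{b}^-),$$
i.e. the orbit in question is simply the $G$-orbit of $\mathfrak{b}\times_{\mathfrak{t}}\mathfrak{b}^-$ under $G=G\times\{e\}\subseteq G\times G$.

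Finally I would conclude via orbit--stabiliser. The map $G/B\longrightarrow\overline{G}$, $gB\mapsto (g,e)\cdot(\mathfrak{b}\times_{\mathfrak{t}}\mathfrak{b}^-)$, is well defined and injective, since $(g,e)$ lies in $B\times B^-$ precisely when $g\in B$; it is $G$-equivariant, and its image is the $G$-orbit just described. As $G/B$ is a homogeneous $G$-space and the $G$-stabiliser of the base point $\mathfrak{b}\times_{\mathfrak{t}}\mathfrak{b}^-$ is $\Stab_G(\mathfrak{b}\times_{\mathfrak{t}}\mathfrak{b}^-)=B$, this map is an isomorphism onto its image. Composing with the isomorphism from the first paragraph yields $\overline{\mu}_{\mathcal{S}}^{-1}(0)\cong G/B$. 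I do not anticipate any real obstacle here: the genuine content is carried by Proposition \ref{Proposition: Null fibre}, and the only point requiring care in the corollary itself is to keep the two opposite Borels $\mathfrak{b}$ and $\mathfrak{b}^-$ straight and to note, via \eqref{Equation: StabilisermI} with $I=\emptyset$, that the $B^-$-factor in the orbit can be absorbed.
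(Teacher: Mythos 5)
Your proposal is correct and follows essentially the same route as the paper: identify $0_{\mathcal{S}}=\xi$, use the closed embedding from Remark \ref{Remark: Projective fibres} to identify $\overline{\mu}_{\mathcal{S}}^{-1}(0)$ with $\{\gamma\in\overline{G}:(0,\xi)\in\gamma\}$, invoke Proposition \ref{Proposition: Null fibre}, and finish with the stabilizer computation \eqref{Equation: StabilisermI} giving $(G\times B^-)/(B\times B^-)\cong G/B$. Your extra care in justifying $0_{\mathcal{S}}=\xi$ and in absorbing the $B^-$-factor only makes explicit what the paper leaves implicit.
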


\begin{proof}
Note that $x_{\mathcal{S}}=\xi$ if $x=0$. It follows that
\begin{align*}
\overline{\mu}_{\mathcal{S}}^{-1}(0) & = \{(\gamma,(0,\xi)):\gamma\in\overline{G}\text{ and }(0,\xi)\in\gamma\}\\
& \cong \{\gamma\in\overline{G}:(0,\xi)\in\gamma\}\\
& = (G\times B^{-})\mathfrak{b}\times_{\mathfrak{t}}\mathfrak{b}^{-}, 	
\end{align*}
where the last line comes from Proposition \ref{Proposition: Null fibre}. At the same time, note that the $(G\times G)$-stabilizer of $\mathfrak{b}\times_{\mathfrak{t}}\mathfrak{b}^{-}\in\overline{G}$ is $B\times B^{-}$ (see \eqref{Equation: StabilisermI}). We therefore have
$$\overline{\mu}_{\mathcal{S}}^{-1}(0)\cong (G\times B^{-})/(B\times B^{-})\cong G/B.$$	
\end{proof}

\begin{rem}\label{Remark: Fibre over 0}
Observe that $\nu^{-1}(0)$ is precisely the zero-section of the vector bundle $G\times_B\mathfrak{m}\longrightarrow G/B$. It follows that $\nu^{-1}(0)\cong G/B$, or equivalently $$\nu^{-1}(0)\cong\overline{\mu}_{\mathcal{S}}^{-1}(0).$$
\end{rem}

\subsection{The case $\g=\mathfrak{sl}_2$}\label{Subsection: sl2}
We now specialize some of our constructions to the case $\g=\mathfrak{sl}_2$. This culminates in Theorem \ref{Theorem: sl2}, to which the following lemma is relevant.

\begin{lem}\label{Lemma: P1}
If $\g=\mathfrak{sl}_2$, then one has a variety isomorphism
$$\overline{\mu}_{\mathcal{S}}^{-1}(x)\cong\mathbb{P}^1$$ for each $x\in\g$.
\end{lem}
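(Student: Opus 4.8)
The plan is to reduce the statement to elementary linear algebra by using the classical model $\overline G\cong\mathbb{P}(\mathrm{Mat}_2(\C))=\mathbb{P}^3$ for the wonderful compactification of $G=\mathrm{PGL}_2$. Under the $(G\times G)$-equivariant isomorphism sending $[A]\in\mathbb{P}(\mathrm{Mat}_2(\C))$ to the subspace $\gamma_{[A]}:=\{(y,z)\in\g\oplus\g:yA=Az\}$ --- which is $3$-dimensional for every $A\neq 0$ (the two $(\mathrm{PGL}_2\times\mathrm{PGL}_2)$-orbits on $\mathbb{P}(\mathrm{Mat}_2(\C))$ are the rank-$2$ and rank-$1$ loci, and for invertible $A$ one recovers $\gamma_g=\{(\mathrm{Ad}_gz,z):z\in\g\}$ with $g=[A]$) --- one has $\overline G\cong\mathbb{P}^3$. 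Remark~\ref{Remark: Projective fibres} then gives, for each $x\in\g$, $$\overline{\mu}_{\mathcal{S}}^{-1}(x)\;\cong\;\{\gamma\in\overline G:(x,x_{\mathcal{S}})\in\gamma\}\;\cong\;\{[A]\in\mathbb{P}^3:xA=Ax_{\mathcal{S}}\}\;=\;\mathbb{P}(\ker L_x),$$ where $L_x\colon\mathrm{Mat}_2(\C)\to\mathrm{Mat}_2(\C)$ is the linear map $A\mapsto xA-Ax_{\mathcal{S}}$. It therefore suffices to prove $\dim_\C\ker L_x=2$ for every $x$.

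The plan for this is a uniform module-theoretic computation. Writing $M_v$ for $\C^2$ viewed as a $\C[t]$-module with $t$ acting by $v\in\mathfrak{sl}_2$, one has $\ker L_x=\mathrm{Hom}_{\C[t]}(M_{x_{\mathcal{S}}},M_x)$. Since the Kostant section consists of regular elements, $x_{\mathcal{S}}$ has a cyclic vector, so $M_{x_{\mathcal{S}}}\cong\C[t]/(p)$ with $p$ the characteristic polynomial of $x_{\mathcal{S}}$. Because $\C[\mathfrak{sl}_2]^G=\C[\det]$, the condition $\chi(x)=\chi(x_{\mathcal{S}})$ forces $x$ to have the same characteristic polynomial $p$, so $p$ annihilates $M_x$ by Cayley--Hamilton. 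Hence $$\ker L_x\;\cong\;\mathrm{Hom}_{\C[t]}(\C[t]/(p),M_x)\;\cong\;\{m\in M_x:p(t)m=0\}\;=\;M_x,$$ which is $2$-dimensional; thus $\overline{\mu}_{\mathcal{S}}^{-1}(x)\cong\mathbb{P}(\ker L_x)\cong\mathbb{P}^1$.

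This uniform argument absorbs the three possible types of $x$, and it is reassuring to compare with what is already available: for $x\in\g^{\text{rs}}$ the fibre is a smooth complete toric curve by Lemma~\ref{Lemma: Slice lemma}(iv); for $x=0$ one has $\overline{\mu}_{\mathcal{S}}^{-1}(0)\cong G/B\cong\mathbb{P}^1$ by Corollary~\ref{Corollary: Flag variety}; and for nonzero nilpotent $x$, $G$-equivariance of $\overline{\mu}_{\mathcal{S}}$ reduces to $x=\xi$, where \cite[Corollary 3.12]{BalibanuUniversal} identifies $\overline{\mu}_{\mathcal{S}}^{-1}(\xi)$ with the closure of $G_\xi=\{[\Id+t\xi]:t\in\C\}$ in $\overline G$ --- the line $\mathbb{P}(\C\Id\oplus\C\xi)\subseteq\mathbb{P}^3$. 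I expect the regular nilpotent fibre to be the only real obstacle: it lies inside the non-generic locus of $\overline G$ and is not toric, so it escapes the toric machinery (Lemmas~\ref{Lemma: Slice lemma}, \ref{Lemma: Limit}) that handles the regular semisimple fibres. The one technical point needing care is making the identification $\overline G\cong\mathbb{P}^3$ precise and compatible with the tautological description of $T^*\overline G(\log D)$; granting that, the linear-algebra computation --- powered by the regularity of $x_{\mathcal{S}}$ and the identity $\chi(x)=\chi(x_{\mathcal{S}})$ --- is immediate.
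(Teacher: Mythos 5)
Your argument is correct, and it takes a genuinely different route from the paper's. The paper splits into two cases: for $x=0$ it invokes Corollary \ref{Corollary: Flag variety} to get $\overline{\mu}_{\mathcal{S}}^{-1}(0)\cong G/B\cong\mathbb{P}^1$, and for $x\neq 0$ (equivalently $x\in\g^{\mathrm{r}}$, since $\g=\mathfrak{sl}_2$) it uses Lemma \ref{Lemma: Slice lemma} to realize the fibre as the closure in $\overline{G}$ of a free orbit of $G_x\times\{e\}$, then cites \cite[Theorem 5.2]{BalibanuPeterson} to transport that closure to an orbit closure in $G/B\cong\mathbb{P}^1$ and concludes by a dimension count. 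You instead work uniformly in $x$ through the classical model $\overline{\mathrm{PGL}_2}\cong\mathbb{P}(\mathrm{Mat}_2(\C))$, reducing the fibre via Remark \ref{Remark: Projective fibres} to the projectivized kernel of the Sylvester operator $A\mapsto xA-Ax_{\mathcal{S}}$ and computing that kernel to be $2$-dimensional from the regularity (hence cyclicity) of $x_{\mathcal{S}}$ together with $\chi(x)=\chi(x_{\mathcal{S}})$; this module-theoretic step is airtight. What your approach buys is elementarity and uniformity --- no case split, no toric machinery, no appeal to \cite{BalibanuPeterson} --- at the cost of having to verify that $[A]\mapsto\gamma_{[A]}=\{(y,z):yA=Az\}$ really is a $(G\times G)$-equivariant isomorphism $\mathbb{P}(\mathrm{Mat}_2(\C))\to\overline{G}\subseteq\mathrm{Gr}(3,\g\oplus\g)$: constancy of $\dim\gamma_{[A]}=3$ makes it a morphism, it extends $\kappa$, and injectivity plus smoothness of the target then give an isomorphism. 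You rightly flag this as the one point needing care; it is classical and not a gap. One small correction to your closing commentary: the regular nilpotent fibre does not in fact escape the paper's machinery, since Lemma \ref{Lemma: Slice lemma}(i)--(iii) and \cite[Theorem 5.2]{BalibanuPeterson} apply to all regular $x$, not only regular semisimple ones; only the toric statement (iv) requires $x\in\g^{\mathrm{rs}}$.
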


\begin{proof}
The case $x=0$ follows from Corollary \ref{Corollary: Flag variety} and the fact that $G/B\cong\mathbb{P}^1$. We may therefore assume that $x\neq 0$. Since $\g=\mathfrak{sl}_2$, this is equivalent to having $x\in\g^{\text{r}}$. Lemma \ref{Lemma: Slice lemma} and the description
$$\overline{\mu}_{\mathcal{S}}^{-1}(x)=\{(\gamma,(x,x_{\mathcal{S}})):\gamma\in\overline{G}\text{ and }(x,x_{\mathcal{S}})\in\gamma\}$$ now imply the following: $\overline{\mu}_{\mathcal{S}}^{-1}(x)$ is isomorphic to the closure of a free orbit of $G_x=G_x\times\{e\}\subseteq G\times G$ in $\overline{G}$. By \cite[Theorem 5.2]{BalibanuPeterson}, the latter is isomorphic to the closure of a free $G_x$-orbit in $G/B\cong\mathbb{P}^1$. Dimension considerations force this new orbit closure to be $\mathbb{P}^1$ itself. The last three sentences imply that $$\overline{\mu}_{\mathcal{S}}^{-1}(x)\cong\mathbb{P}^1.$$
\end{proof}

This facilitates the following improvement to Theorem \ref{Theorem: GxBHKostantWhittaker}.

\begin{thm}\label{Theorem: sl2}
If $\g=\mathfrak{sl}_2$, then the bimeromorphism $\overline{\rho}: \overline{G\times\mathcal S}\to G\times_B\mathfrak{m}$ is a biholomorphism. 
\end{thm}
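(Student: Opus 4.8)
The plan is to deduce the statement from the fact that, for $\g=\mathfrak{sl}_2$, both $\overline{\mu}_{\mathcal{S}}:\overline{G\times\mathcal{S}}\to\g$ and $\nu:G\times_B\mathfrak{m}\to\g$ are proper morphisms all of whose fibres are isomorphic to $\mathbb{P}^1$. For $\overline{\mu}_{\mathcal{S}}$ this is Lemma \ref{Lemma: P1}; for $\nu$ one uses that $\mathfrak{m}=\g$ in this case, so that the closed embedding $(\pi_{\mathfrak{m}},\nu):G\times_B\mathfrak{m}\hookrightarrow G/B\times\g$ of Remark \ref{Remark: Closed embedding} is in fact an isomorphism onto $G/B\times\g$ (for each $[g]\in G/B$ and each $z\in\g$ there is a unique $y\in\g=\mathfrak{m}$ with $\Ad_g(y)=z$), under which $\nu$ becomes the second projection; in particular $G\times_B\mathfrak{m}\cong\mathbb{P}^1\times\g$ as varieties over $\g$. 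Both $\overline{G\times\mathcal{S}}$ and $G\times_B\mathfrak{m}$ are smooth of dimension $4$, and miracle flatness shows $\overline{\mu}_{\mathcal{S}}$ to be flat (its source is Cohen--Macaulay, its target is regular, and all its fibres have the expected dimension $1$).

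First I would show that $\overline{\rho}$ is a morphism at every point lying over a regular element of $\g$; note that for $\g=\mathfrak{sl}_2$ one has $\g^{\mathrm{r}}=\g\setminus\{0\}$. Over $\g^{\mathrm{rs}}$ this is immediate from Remark \ref{Remark: Continuous}, where $\overline{\rho}$ restricts to an honest isomorphism $\overline{\mu}_{\mathcal{S}}^{-1}(\g^{\mathrm{rs}})\overset{\cong}{\to}\nu^{-1}(\g^{\mathrm{rs}})$. It remains to treat the fibre over a regular nilpotent $y$. One computes that $\overline{\mu}_{\mathcal{S}}^{-1}(y)\cap(G\times\mathcal{S})$ equals the coset $\{(g,\xi):\Ad_g(\xi)=y\}$, a translate of $G_{\xi}\cong\mathbb{A}^1$; it is dense in $\overline{\mu}_{\mathcal{S}}^{-1}(y)\cong\mathbb{P}^1$, with complement a single point $q_y$, and on it $\overline{\rho}$ coincides with $\rho$. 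By Proposition \ref{Proposition: Slice log symplectic}, $\rho$ is an isomorphism onto the open leaf $G\times_B\mathfrak{m}^{\times}$, which meets $\nu^{-1}(y)$ in the complement of one point $p_y$; so $\rho$ carries the coset isomorphically onto $\nu^{-1}(y)\setminus\{p_y\}$. Now consider $\Gamma:=\overline{\mathrm{graph}(\rho)}$, which is the graph of $\overline{\rho}$ (it is irreducible of dimension $4$ and contains $\mathrm{graph}(\overline{\rho}^{\circ})$), together with the proper birational projections $p_1:\Gamma\to\overline{G\times\mathcal{S}}$ and $p_2:\Gamma\to G\times_B\mathfrak{m}$; the latter is an isomorphism over $G\times_B\mathfrak{m}^{\times}$. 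If $\overline{\rho}$ failed to be a morphism at $q_y$, then, because $\overline{\mu}_{\mathcal{S}}$ and $\nu$ agree on $\Gamma$ and $\nu^{-1}(y)\cong\mathbb{P}^1$, the fibre $p_1^{-1}(q_y)$ would be all of $\{q_y\}\times\nu^{-1}(y)$; applying $p_2$ and using that $p_2$ is injective over $\nu^{-1}(y)\setminus\{p_y\}$, one would conclude that $\rho^{-1}$ is constant on the one-dimensional set $\nu^{-1}(y)\setminus\{p_y\}$, contradicting the injectivity of $\rho^{-1}$. Hence $\overline{\rho}$ is a morphism at $q_y$, and moreover $\overline{\rho}$ restricts to an isomorphism $\overline{\mu}_{\mathcal{S}}^{-1}(y)\to\nu^{-1}(y)$.

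Consequently the indeterminacy locus of $\overline{\rho}$ is contained in $\overline{\mu}_{\mathcal{S}}^{-1}(0)\cong\mathbb{P}^1$, which has codimension $3$ in the smooth $4$-fold $\overline{G\times\mathcal{S}}$. Writing $\overline{\rho}=(f,\overline{\mu}_{\mathcal{S}})$ via the identification $G\times_B\mathfrak{m}\cong\mathbb{P}^1\times\g$, the indeterminacy locus of the meromorphic map $f:\overline{G\times\mathcal{S}}\dashrightarrow\mathbb{P}^1$ coincides with that of $\overline{\rho}$, since $\overline{\mu}_{\mathcal{S}}$ is everywhere defined. But the indeterminacy locus of a meromorphic map from a smooth variety to $\mathbb{P}^1$ is empty or of pure codimension $2$ (locally it is the common zero locus $V(a)\cap V(b)$ of coprime regular functions $a,b$); being of codimension at least $3$, it must be empty, so $f$, hence $\overline{\rho}$, is a morphism. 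Finally, $\overline{\rho}:\overline{G\times\mathcal{S}}\to G\times_B\mathfrak{m}$ is a proper birational morphism over $\g$ restricting on the fibre over $x$ to a morphism $\overline{\rho}_x:\overline{\mu}_{\mathcal{S}}^{-1}(x)\to\nu^{-1}(x)$ of copies of $\mathbb{P}^1$; pulling back a relative hyperplane class on $\mathbb{P}^1\times\g\cong G\times_B\mathfrak{m}$ and using the flatness of $\overline{\mu}_{\mathcal{S}}$, the degree of $\overline{\rho}_x$ is locally constant in $x$, equals $1$ over $\g^{\mathrm{rs}}$, and hence equals $1$ for all $x$ by connectedness of $\g$; so each $\overline{\rho}_x$ is an isomorphism and $\overline{\rho}$ is bijective. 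A bijective proper birational morphism onto the smooth (hence normal) variety $G\times_B\mathfrak{m}$ is an isomorphism by Zariski's main theorem, so $\overline{\rho}$ is a biholomorphism.

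I expect the main obstacle to be the regular-nilpotent step: ruling out that $\overline{\rho}$ acquires indeterminacy along the fibres of $\overline{\mu}_{\mathcal{S}}$ over the regular nilpotent orbit. This is precisely where one must exploit that $\rho$ is a genuine isomorphism onto the open symplectic leaf, together with the explicit description of these fibres; everything afterwards (the passage to the $\mathbb{P}^1$-factor, the codimension count, and the degree argument) is soft.
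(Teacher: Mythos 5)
Your argument is correct, but it follows a genuinely different route from the paper's. The paper also begins with the observation that $\mathfrak{m}=\g$ identifies $G\times_B\mathfrak{m}$ with $G/B\times\g$, but it then applies a Hartogs-type extension theorem of McKay to the biholomorphism $(\pi,\nu)\circ\overline{\rho}^{\circ}$ (using that $G/B\times\g$ is homogeneous and an unbranched cover of a compact K\"ahler manifold, together with Lemma \ref{Lemma: Codimgeq2}) to extend it to a \emph{local} biholomorphism on all of $\overline{G\times\mathcal{S}}$; bijectivity then follows fibrewise from Lemma \ref{Lemma: P1}, since a locally injective holomorphic self-map of $\mathbb{P}^1$ is a biholomorphism. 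You instead stay entirely within algebraic geometry: you first remove the potential indeterminacy over the regular nilpotent orbit by the graph/Zariski-main-theorem argument (this is the step the paper never needs, and it is correctly executed --- the inference ``indeterminate at $q_y$ $\Rightarrow$ $p_1^{-1}(q_y)$ positive-dimensional'' uses ZMT for the proper birational $p_1$ onto the normal $\overline{G\times\mathcal{S}}$, and the injectivity of $p_2$ over the open leaf follows from continuity of $\rho^{-1}$ there); this pushes the indeterminacy locus into the codimension-three set $\overline{\mu}_{\mathcal{S}}^{-1}(0)$, after which the pure-codimension-two structure of the indeterminacy locus of a meromorphic map to $\mathbb{P}^1$ forces it to be empty; finally, miracle flatness and local constancy of the degree of $\overline{\rho}^*\mathcal{O}(1)$ on the $\mathbb{P}^1$-fibres replace the paper's local-injectivity argument. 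What your approach buys is independence from the analytic extension theorem (everything is ZMT, Krull, and flatness); what the paper's approach buys is brevity, since the extension theorem handles the entire codimension-two complement at once and no separate analysis of the nilpotent fibres is required. Both proofs rely essentially on Lemma \ref{Lemma: P1} and on $\mathfrak{m}=\g$, and both would face the same obstruction in higher rank, where the fibres are no longer curves.
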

\begin{proof}
In light of Theorem \ref{Theorem: GxBHKostantWhittaker}, we must prove that $$\overline{\rho}^{\circ}:\overline{G\times\mathcal{S}}^{\circ}\overset{\cong}\longrightarrow(G\times_B\mathfrak{m})^{\circ}$$ extends to a biholomorphism $$\overline{G\times\mathcal{S}}\overset{\cong}\longrightarrow G\times_B\mathfrak{m}.$$ We begin by observing that $\mathfrak{m} = \g$, as $\g=\mathfrak{sl}_2$. The closed embedding \eqref{Equation: Nice closed embedding} is therefore an isomorphism
$$(\pi,\nu):G\times_B\mathfrak{m}\overset{\cong}\longrightarrow G/B\times\g.$$ It thus suffices to prove that $$\varphi:=(\pi,\nu)\circ\overline{\rho}^{\circ}:\overline{G\times\mathcal{S}}^{\circ}\longrightarrow G/B\times\g$$ extends to a biholomorphism
$$\overline{G\times\mathcal{S}}\overset{\cong}\longrightarrow G/B\times\g.$$

Note that the biholomorphism group of $G/B\times\g$ acts transitively, and that $G/B\times \g$ is an unbranched cover of the compact K\"ahler manifold $\mathbb{P}^1\times (\mathbb{C}^3/\mathbb{Z}^6)$. We also know $\varphi$ to be a biholomorphism onto its image, and that $\overline{G\times\mathcal{S}}^{\circ}$ has a complement of codimension at least two in $\overline{G\times\mathcal{S}}$ (see Lemma \ref{Lemma: Codimgeq2}). These last two sentences allow us to apply \cite[Theorem 3.48]{McKay} and deduce that $\varphi$ extends to a local biholomorphism
$$\widetilde{\varphi}:\overline{G\times\mathcal{S}}\longrightarrow G/B\times\g.$$ The commutative diagrams \eqref{Equation: Hessenberg diagram} and \eqref{Equation: Prelim diag} are also easily seen to imply that
$$\begin{tikzcd}
\overline{G\times\mathcal{S}} \arrow{rr}{\widetilde{\varphi}} \arrow[swap]{dr}{\overline{\mu}_{\mathcal{S}}}& & G/B\times\g \arrow{dl}{\mathrm{pr}_{\g}}\\
& \mathfrak{g} &  
\end{tikzcd}$$
commutes, where $\mathrm{pr}_{\g}:G/B\times\g\longrightarrow\g$ is projection to $\g$. It follows that the local biholomorphism $\widetilde{\varphi}$ restricts to a locally injective holomorphic map
$$\widetilde{\varphi}_x:\overline{\mu}_{\mathcal{S}}^{-1}(x)\longrightarrow\mathrm{pr}_{\g}^{-1}(x)$$ for each $x\in\g$. The domain and codomain of $\widetilde{\varphi}_x$ are isomorphic to $\mathbb{P}^1$ (see Lemma \ref{Lemma: P1}), and a locally injective holomorphic map $\mathbb{P}^1\longrightarrow\mathbb{P}^1$ is a biholomorphism. We conclude that $\widetilde{\varphi}_x$ is a biholomorphism for each $x\in\g$. Our local biholomorphism $\widetilde{\varphi}$ is therefore bijective, so that it must be a biholomorphism. This completes the proof.
\end{proof}

\subsection{A conjecture and its implications for Hessenberg varieties}
Theorems \ref{Theorem: GxBHKostantWhittaker} and \ref{Theorem: sl2} and Remark \ref{Remark: Fibre over 0} lend plausibility to the following conjecture.

\begin{conj}\label{Conjecture: Main conjecture}
The bimeromorphism 
$\overline{\rho}:\overline{G\times\mathcal{S}}\overset{\cong}\longrightarrow G\times_B\mathfrak{m}$
in Theorem \ref{Theorem: GxBHKostantWhittaker} is a biholomorphism.
\end{conj}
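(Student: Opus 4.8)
\textbf{Proof proposal for Conjecture \ref{Conjecture: Main conjecture}.}
The plan is to upgrade $\overline{\rho}$ to a biholomorphism by showing that \emph{both} $\overline{\rho}$ and its inverse extend to everywhere-defined holomorphic maps. Since any two such extensions remain mutually inverse on the dense subvariety $G\times\mathcal{S}$ and the underlying varieties are reduced and separated, the compositions are then the identity, so $\overline{\rho}$ is a biholomorphism. Because $(\pi_{\mathfrak m},\nu)\colon G\times_B\mathfrak{m}\hookrightarrow G/B\times\g$ is a closed embedding (Remark \ref{Remark: Closed embedding}) and $\overline{\mu}_{\mathcal{S}}$ already extends the $\g$-component $\nu\circ\overline{\rho}^{\circ}$ to all of $\overline{G\times\mathcal{S}}$, extending $\overline{\rho}$ to a morphism reduces to extending the single meromorphic map $\sigma:=\pi_{\mathfrak m}\circ\overline{\rho}^{\circ}\colon\overline{G\times\mathcal{S}}\dashrightarrow G/B$ across the codimension-$\geq 2$ locus $\overline{G\times\mathcal{S}}'$ of Lemma \ref{Lemma: Codimgeq2}; symmetrically, using the closed embedding of Remark \ref{Remark: Projective fibres} together with Lemma \ref{Lemma: Next extension}, extending $\overline{\rho}^{-1}$ reduces to extending $\pi_{\mathcal{S}}\circ(\overline{\rho}^{\circ})^{-1}\colon G\times_B\mathfrak{m}\dashrightarrow\overline{G}$ across $(G\times_B\mathfrak{m})'$. (Note that once $\overline{\rho}$ extends to a morphism it is automatically proper, as $\overline{\mu}_{\mathcal{S}}=\nu\circ\overline{\rho}$ is proper and $\nu$ is separated.)

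Since $\overline{\mu}_{\mathcal{S}}$ and $\nu$ are proper with $\overline{\rho}$ commuting with them (Theorem \ref{Theorem: GxBHKostantWhittaker}), I would carry out the extension fibre by fibre over $\g$. Let $\Gamma\subseteq\overline{G\times\mathcal{S}}\times_{\g}(G\times_B\mathfrak{m})$ be the closure of the graph of $\overline{\rho}^{\circ}$; it is $G$-invariant and irreducible, with proper birational projections $p\colon\Gamma\to\overline{G\times\mathcal{S}}$ and $q\colon\Gamma\to G\times_B\mathfrak{m}$. For $x\in\g$ one obtains a $G_x$-invariant correspondence $\Gamma_x\subseteq\overline{\mu}_{\mathcal{S}}^{-1}(x)\times\mathrm{Hess}(x,\mathfrak{m})$ with proper surjections onto the two fibres, and it suffices to show that $\Gamma_x$ is the graph of an isomorphism $\overline{\mu}_{\mathcal{S}}^{-1}(x)\xrightarrow{\;\cong\;}\mathrm{Hess}(x,\mathfrak{m})$ for \emph{every} $x$. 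This is already known for $x\in\g^{\mathrm{rs}}$ (Lemma \ref{Lemma: IsoMomentFibres} and Remark \ref{Remark: Continuous}) and for $x=0$ (Corollary \ref{Corollary: Flag variety}, Remark \ref{Remark: Fibre over 0}). For the remaining $x$ I would stratify $\g$ by the conjugacy class of the pair $(x_s,x_n)$ appearing in the Jordan decomposition $x=x_s+x_n$. Along $\g^{\mathrm r}$ both $\overline{\mu}_{\mathcal{S}}^{-1}(x)$ and $\mathrm{Hess}(x,\mathfrak{m})$ are $\ell$-dimensional, irreducible, and carry a free open dense orbit of the \emph{abelian} group $G_x$ (Lemmas \ref{Lemma: Unique open dense}, \ref{Lemma: Slice lemma}), hence are equivariant partial compactifications of $G_x$; the aim is to show that the fibrewise isomorphisms over $\g^{\mathrm{rs}}$ degenerate, via the explicit one-parameter-subgroup limits of Lemmas \ref{Lemma: 1PS} and \ref{Lemma: Limit}, to isomorphisms as $x$ approaches the (codimension-one) regular non-semisimple locus. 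For nilpotent $x$ one has $x_{\mathcal{S}}=\xi$ for all such $x$, so $\overline{\mu}_{\mathcal{S}}^{-1}(x)\cong\{\gamma\in\overline{G}\colon(x,\xi)\in\gamma\}$, and this should be compared directly with $\mathrm{Hess}(x,\mathfrak{m})\subseteq G/B$ by a Springer-type analysis of the Borels adapted to $x$ together with the boundary stratification of $\overline{G}$, Proposition \ref{Proposition: Null fibre} being the prototype $x=0$. The mixed strata then reduce to the nilpotent case inside the centralizer $G_{x_s}$.

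The main obstacle is this fibrewise step over the non-regular locus. There the fibres of $\overline{\mu}_{\mathcal{S}}$ jump in dimension — for instance to $\dim G/B$ over $0$ — so the codimension bookkeeping that sufficed for $\g=\mathfrak{sl}_2$, where $\overline{\mu}_{\mathcal{S}}^{-1}(x)\cong\mathbb{P}^1$ for all $x$ (Lemma \ref{Lemma: P1}), genuinely breaks down; moreover the McKay-type extension argument used in Theorem \ref{Theorem: sl2} does not apply verbatim, since $G\times_B\mathfrak{m}$ is not homogeneous once $\g\neq\mathfrak{sl}_2$. What is really needed is a uniform structural description of $\overline{\mu}_{\mathcal{S}}^{-1}(x)$ for all $x$ — in particular its normality and its identification as an equivariant embedding of $G_x$ compatible with the known geometry of $\mathrm{Hess}(x,\mathfrak{m})$ — strong enough to force the proper birational surjection $\Gamma_x\to\overline{\mu}_{\mathcal{S}}^{-1}(x)$ to be an isomorphism, i.e. to rule out the small-contraction behaviour that a general bimeromorphism between log symplectic varieties may exhibit. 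Establishing such a description is, I expect, the crux of the conjecture.
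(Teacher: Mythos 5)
The statement you are addressing is stated in the paper as Conjecture \ref{Conjecture: Main conjecture} and is \emph{not} proved there: the authors establish it only for $\g=\mathfrak{sl}_2$ (Theorem \ref{Theorem: sl2}) and present the general case as an open problem, supported by Theorem \ref{Theorem: GxBHKostantWhittaker}, the fibre computation over $0$ (Corollary \ref{Corollary: Flag variety} and Remark \ref{Remark: Fibre over 0}), and the regular semisimple fibres (Lemma \ref{Lemma: IsoMomentFibres}). Your proposal is a strategy outline rather than a proof, and you say as much in your final paragraph: the entire content of the conjecture is concentrated in the step you defer, namely showing that the fibrewise correspondence $\Gamma_x$ is the graph of an isomorphism $\overline{\mu}_{\mathcal S}^{-1}(x)\to\mathrm{Hess}(x,\mathfrak m)$ for $x$ outside $\g^{\mathrm{rs}}\cup\{0\}$. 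Nothing in the paper's machinery supplies this step, and your preliminary reductions do not make it easier.

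Two specific points. First, the opening reduction --- ``extending $\overline{\rho}$ to a morphism reduces to extending $\sigma=\pi_{\mathfrak m}\circ\overline{\rho}^{\circ}$ across the codimension-$\geq 2$ locus'' --- is a correct restatement but buys nothing: a meromorphic map from a smooth variety into a projective target such as $G/B$ \emph{always} has indeterminacy locus of codimension at least two, so the codimension bound is no obstruction to that locus being nonempty. This is precisely why the paper obtains only a bimeromorphism from the meromorphic-extension theorem it cites, while the $\mathfrak{sl}_2$ proof invokes a much stronger extension theorem whose hypotheses (a homogeneous target covering a compact K\"ahler manifold) fail for $\g\neq\mathfrak{sl}_2$, as you yourself observe. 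Second, the degeneration argument you sketch over the regular non-semisimple and nilpotent strata is not carried out: at minimum you would need irreducibility, dimension, and normality statements for $\overline{\mu}_{\mathcal S}^{-1}(x)$ at arbitrary $x$ (where, as you note, the fibre dimension jumps), together with an argument excluding that either projection of $\Gamma$ contracts a divisor lying over the non-regular locus --- exactly the small-contraction behaviour you flag. Since you correctly identify this as the crux and do not resolve it, the proposal does not constitute a proof of the conjecture.
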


We now formulate some implications of this conjecture for the geometry of Hessenberg varieties. To this end, assume that Conjecture \ref{Conjecture: Main conjecture} is true. Consider the bundle projection
$\pi:T^*\overline{G}(\log D)\longrightarrow\overline{G}$ and the composite map
$$\psi:=\pi\bigg\vert_{\overline{G\times\mathcal{S}}}\circ\overline{\rho}^{-1}:G\times_B\mathfrak{m}\longrightarrow\overline{G}.$$ Let us write 
$$\psi_x:=\psi\big\vert_{\mathrm{Hess}(x,\mathfrak{m})}:\mathrm{Hess}(x,\mathfrak{m})\longrightarrow\overline{G}$$ for the restriction of $\psi$ to a Hessenberg variety $\mathrm{Hess}(x,\mathfrak{m})\subseteq G\times_B\mathfrak{m}$.

\begin{cor}\label{Corollary: Closed}
Assume that Conjecture \ref{Conjecture: Main conjecture} is true. If $x\in\g$, then $$\psi_x:\mathrm{Hess}(x,\mathfrak{m})\longrightarrow\overline{G}$$ is a closed embedding of algebraic varieties. This embedding is $G_x$-equivariant with respect to the action of $G_x=G_x\times\{e\}\subseteq G\times G$ on $\overline{G}$.
\end{cor}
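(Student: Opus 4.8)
The plan is to derive the corollary formally from Conjecture \ref{Conjecture: Main conjecture} together with the fibrewise description of $\overline{\mu}_{\mathcal{S}}$ recorded in Remark \ref{Remark: Projective fibres}. Granting the conjecture, $\overline{\rho}:\overline{G\times\mathcal{S}}\longrightarrow G\times_B\mathfrak{m}$ is an honest biholomorphism. First I would note that the diagram \eqref{Equation: Newer diagram} commutes as a diagram of holomorphic maps: $\nu\circ\overline{\rho}$ and $\overline{\mu}_{\mathcal{S}}$ are holomorphic and agree on the dense open subset $\overline{G\times\mathcal{S}}^{\circ}$ by \eqref{Equation: Prelim diag}, hence everywhere. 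Consequently $\overline{\rho}$ maps the fibre $\overline{\mu}_{\mathcal{S}}^{-1}(x)$ biholomorphically onto $\nu^{-1}(x)=\mathrm{Hess}(x,\mathfrak{m})$ for every $x\in\g$; write $\overline{\rho}_x$ for this restriction. The key first step is to upgrade $\overline{\rho}_x$ to an isomorphism of algebraic varieties. Here I would use that $\overline{\mu}_{\mathcal{S}}^{-1}(x)$ and $\mathrm{Hess}(x,\mathfrak{m})$ are projective algebraic varieties — the former via the closed embedding $(\pi_{\mathcal{S}},\overline{\mu}_{\mathcal{S}})$ of Remark \ref{Remark: Projective fibres}, the latter via Remark \ref{Remark: Closed embedding} — and apply Serre's GAGA: a biholomorphism between the analytifications of projective $\C$-varieties is the analytification of an isomorphism of algebraic varieties. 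Thus $\overline{\rho}_x$ is an isomorphism of algebraic varieties, and it is $G_x$-equivariant because $\overline{\rho}$ is $G$-equivariant (Theorem \ref{Theorem: GxBHKostantWhittaker}) and $G_x\subseteq G=G\times\{e\}$ preserves both fibres.

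Next I would study $\pi_{\mathcal{S}}=\pi\big\vert_{\overline{G\times\mathcal{S}}}$ along the fibre. By Remark \ref{Remark: Projective fibres}, $\pi_{\mathcal{S}}$ restricts to an isomorphism of algebraic varieties
$$\overline{\mu}_{\mathcal{S}}^{-1}(x)\overset{\cong}\longrightarrow\{\gamma\in\overline{G}:(x,x_{\mathcal{S}})\in\gamma\},$$
and the target is a closed subvariety of $\overline{G}$: the incidence condition ``$(x,x_{\mathcal{S}})\in\gamma$'' is the locus cut out inside $\overline{G}$ by intersecting the tautological-bundle description \eqref{Equation: T*GbarlogD} of $T^*\overline{G}(\log D)$ with the slice $\overline{G}\times\{(x,x_{\mathcal{S}})\}$. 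Hence $\pi_{\mathcal{S}}\big\vert_{\overline{\mu}_{\mathcal{S}}^{-1}(x)}:\overline{\mu}_{\mathcal{S}}^{-1}(x)\hookrightarrow\overline{G}$ is a closed embedding of algebraic varieties, and it is $G_x$-equivariant because $\pi$ is $(G\times G)$-equivariant and $G_x$ is a subgroup of $G=G\times\{e\}$. Since $\psi_x=\pi_{\mathcal{S}}\big\vert_{\overline{\mu}_{\mathcal{S}}^{-1}(x)}\circ(\overline{\rho}_x)^{-1}$ by construction, it is the composition of an isomorphism of algebraic varieties with a $G_x$-equivariant closed embedding of algebraic varieties; therefore $\psi_x$ is itself a $G_x$-equivariant closed embedding of algebraic varieties, which is the assertion of the corollary.

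I do not expect a serious obstacle: once Conjecture \ref{Conjecture: Main conjecture} is assumed, the argument is essentially bookkeeping. The two points that deserve a sentence of care are (a) checking that $\overline{\rho}$ intertwines $\overline{\mu}_{\mathcal{S}}$ with $\nu$ as \emph{holomorphic} rather than merely meromorphic maps, so that the fibrewise restriction $\overline{\rho}_x$ makes sense — immediate from \eqref{Equation: Prelim diag} and density — and (b) the passage from a biholomorphism to an algebraic isomorphism on the fibres, which requires knowing that those fibres are projective; this is exactly the content of Remarks \ref{Remark: Projective fibres} and \ref{Remark: Closed embedding}, so GAGA applies. An alternative to (b) is to first promote $\overline{\rho}$ globally to an isomorphism of algebraic varieties — its graph is the Zariski closure of the graph of the algebraic isomorphism $\overline{\rho}^{\circ}$, hence an algebraic subvariety, and its projection to the smooth irreducible variety $\overline{G\times\mathcal{S}}$ is a bijective birational morphism, hence an isomorphism by Zariski's main theorem — and then restrict to fibres; either route completes the proof.
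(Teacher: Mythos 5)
Your argument is correct and follows essentially the same route as the paper: restrict the biholomorphism $\overline{\rho}$ to fibres, upgrade $\overline{\rho}_x$ to an algebraic isomorphism using projectivity of both fibres (the paper cites a GAGA-type result, \cite[Corollary A.4]{Lange}), compose with the closed embedding $\pi_x$ from Remark \ref{Remark: Projective fibres}, and deduce equivariance from that of $\overline{\rho}$ and $\pi$. The only difference is presentational, namely your explicit verification that \eqref{Equation: Newer diagram} commutes holomorphically and your optional ZMT alternative, neither of which changes the substance.
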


\begin{proof}
Theorem \ref{Theorem: GxBHKostantWhittaker} and Conjecture \ref{Conjecture: Main conjecture} tell us that $\overline{\rho}$ restricts to a biholomorphism
$$\overline{\rho}_x:\overline{\mu}_{\mathcal{S}}^{-1}(x)\overset{\cong}\longrightarrow\nu^{-1}(x)=\mathrm{Hess}(x,\mathfrak{m})$$
of complex analytic spaces. Since $\overline{\mu}_{\mathcal{S}}^{-1}(x)$ and $\mathrm{Hess}(x,\mathfrak{m})$ are projective (see Remarks \ref{Remark: Projective fibres} and \ref{Remark: Closed embedding}), it follows that $\overline{\rho}_x$ is a variety isomorphism (e.g. by \cite[Corollary A.4]{Lange}). 
At the same time, Remark \ref{Remark: Projective fibres} implies that $\pi$ restricts to a closed embedding
$$\pi_x:\overline{\mu}_{\mathcal{S}}^{-1}(x)\longrightarrow\overline{G},\quad (\gamma,(x,x_{\mathcal{S}}))\mapsto \gamma,\quad (\gamma,(x,x_{\mathcal{S}}))\in \overline{\mu}_{\mathcal{S}}^{-1}(x).$$ The composite map
$$\psi_x=\pi_x\circ(\overline{\rho}_x)^{-1}:\mathrm{Hess}(x,\mathfrak{m})\longrightarrow\overline{G}$$ is therefore a closed embedding of algebraic varieties. The claim about equivariance follows from the $G$-equivariance of $\overline{\rho}$, together with the fact that $\pi$ is ($G\times G$)-equivariant.
\end{proof}

One can be reasonably explicit about the image of $\psi_x$, especially if $x\in\g^{\text{r}}$. To elaborate on this, recall the ($G\times G$)-equivariant locally closed embedding $$\kappa:G\longrightarrow\overline{G}\subseteq\mathrm{Gr}(n,\g\oplus\g)$$ defined in \eqref{Equation: Def of gamma}. Given any $x\in\g$, let us write $\overline{G_x}$ for the closure of $\kappa(G_x)$ in $\overline{G}$.

\begin{prop}\label{Proposition: momentfibre}
Assume that Conjecture \ref{Conjecture: Main conjecture} is true. If $x\in\mathfrak{g}$, then the following statements hold.
\begin{itemize}
\item[(i)] We have $$\mathrm{image}(\psi_x)=\{\gamma\in\overline{G}:(x,x_{\mathcal{S}})\in\gamma\}.$$
\item[(ii)] If $x\in\g^{\emph{r}}$, then $$\mathrm{image}(\psi_x)=(e,g^{-1})\cdot\overline{G_x}$$ for any $g\in G$ satisfying $x=\mathrm{Ad}_g(x_{\mathcal{S}})$.
\end{itemize}
\end{prop}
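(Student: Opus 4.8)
The plan is to obtain (i) directly from the proof of Corollary \ref{Corollary: Closed} and then deduce (ii) from (i) by combining it with the orbit description of $\overline{\mu}_{\mathcal{S}}^{-1}(x)$ furnished by Lemma \ref{Lemma: Slice lemma} and Remark \ref{Remark: Projective fibres}. For (i): granting Conjecture \ref{Conjecture: Main conjecture}, the proof of Corollary \ref{Corollary: Closed} shows that $\overline{\rho}$ restricts to a variety isomorphism $\overline{\rho}_x\colon\overline{\mu}_{\mathcal{S}}^{-1}(x)\overset{\cong}\longrightarrow\mathrm{Hess}(x,\mathfrak{m})$ and that $\psi_x=\pi_x\circ(\overline{\rho}_x)^{-1}$, where $\pi_x\colon\overline{\mu}_{\mathcal{S}}^{-1}(x)\longrightarrow\overline{G}$, $(\gamma,(x,x_{\mathcal{S}}))\mapsto\gamma$, is the restriction of $\pi$. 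Since $(\overline{\rho}_x)^{-1}$ is a bijection, $\mathrm{image}(\psi_x)=\mathrm{image}(\pi_x)$, and Remark \ref{Remark: Projective fibres} identifies the latter with $\{\gamma\in\overline{G}:(x,x_{\mathcal{S}})\in\gamma\}$. This proves (i).

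For (ii), fix $g\in G$ with $x=\mathrm{Ad}_g(x_{\mathcal{S}})$; by (i) it suffices to prove $\{\gamma\in\overline{G}:(x,x_{\mathcal{S}})\in\gamma\}=(e,g^{-1})\cdot\overline{G_x}$. I would first record two elementary identities in $\overline{G}$, valid for all $h\in G$, namely $(h,e)\cdot\gamma_g=\gamma_{hg}$ and $(e,g^{-1})\cdot\gamma_h=\gamma_{hg}$; both follow from the definition $\gamma_g=(g,e)\cdot\mathfrak{g}_\Delta$ together with the observation that $(a,b)\cdot\mathfrak{g}_\Delta=\gamma_{ab^{-1}}$ for all $a,b\in G$. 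Since $x\in\g^{\text{r}}$, Lemma \ref{Lemma: Slice lemma} provides a unique open dense $G_x$-orbit in $\overline{\mu}_{\mathcal{S}}^{-1}(x)$ and places $(\gamma_g,(x,x_{\mathcal{S}}))$ in it, so $\overline{\mu}_{\mathcal{S}}^{-1}(x)$ is the closure of $G_x\cdot(\gamma_g,(x,x_{\mathcal{S}}))$. As each $h\in G_x$ fixes $x$, the first identity gives $h\cdot(\gamma_g,(x,x_{\mathcal{S}}))=(\gamma_{hg},(x,x_{\mathcal{S}}))$, so applying the closed embedding $\pi_x$ yields
$$\{\gamma\in\overline{G}:(x,x_{\mathcal{S}})\in\gamma\}=\mathrm{image}(\pi_x)=\overline{\{\gamma_{hg}:h\in G_x\}}=\overline{(e,g^{-1})\cdot\kappa(G_x)},$$
where the last equality uses the second identity. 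Since the automorphism of $\overline{G}$ given by $(e,g^{-1})$ commutes with taking closures, the right-hand side equals $(e,g^{-1})\cdot\overline{\kappa(G_x)}=(e,g^{-1})\cdot\overline{G_x}$, as claimed.

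I do not anticipate any serious obstacle: the argument reduces to bookkeeping with the $(G\times G)$-action on $\overline{G}$ and the left-trivialization conventions of Sections \ref{Subsection: Wonderful} and \ref{Subsection: KW}, together with the orbit facts of Lemma \ref{Lemma: Slice lemma} and Remark \ref{Remark: Projective fibres}. The only points requiring a moment's care are the identities $(h,e)\cdot\gamma_g=\gamma_{hg}$ and $(e,g^{-1})\cdot\gamma_h=\gamma_{hg}$ and the $G_x$-equivariance of $\pi_x$; it is also worth remarking that the chain of equalities proving (ii) does not itself invoke Conjecture \ref{Conjecture: Main conjecture}, which enters only through part (i).
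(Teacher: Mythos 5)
Your part (i) is exactly the paper's argument: $\mathrm{image}(\psi_x)=\mathrm{image}(\pi_x)$, and the latter is read off from Remark \ref{Remark: Projective fibres}. Part (ii) is correct but closes the argument by a different mechanism. The paper deduces from (i) only the inclusion $\kappa(G_xg)\subseteq\mathrm{image}(\psi_x)$, takes closures, and then forces equality by comparing dimensions of irreducible varieties ($\mathrm{image}(\psi_x)\cong\mathrm{Hess}(x,\mathfrak{m})$ is irreducible of dimension $\ell$ by the cited results of Precup, and $\dim\overline{\kappa(G_xg)}=\ell$). You instead invoke Lemma \ref{Lemma: Slice lemma} to write $\overline{\mu}_{\mathcal{S}}^{-1}(x)$ as the closure of the open dense orbit $G_x\cdot(\gamma_g,(x,x_{\mathcal{S}}))$ and push this forward through the closed embedding $\pi_x$, which is a homeomorphism onto its closed image and hence commutes with closure. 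Both routes are valid and end at $\overline{\kappa(G_xg)}=(e,g^{-1})\cdot\overline{G_x}$; your bookkeeping identities $(h,e)\cdot\gamma_g=\gamma_{hg}$ and $(e,g^{-1})\cdot\gamma_h=\gamma_{hg}$ follow correctly from $(a,b)\cdot\g_{\Delta}=\gamma_{ab^{-1}}$. The trade-off is that your version avoids re-citing the irreducibility and dimension of $\mathrm{Hess}(x,\mathfrak{m})$, but it leans on Lemma \ref{Lemma: Slice lemma}, whose proof carries the analogous irreducibility and dimension input on the $\overline{G}$ side; so the same geometric facts are used, just packaged upstream. Your closing remark that the set-theoretic identity in (ii) does not itself need Conjecture \ref{Conjecture: Main conjecture} is accurate.
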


\begin{proof}
To prove (i), recall the notation used in the proof of Corollary \ref{Corollary: Closed}. The image of $\psi_x$ coincides with that of $\pi_x$, and the latter is
$$\{\gamma\in\overline{G}:(x,x_{\mathcal{S}})\in\gamma\}.$$ 
	
We now verify (ii). Note that
$$(x,x_{\mathcal{S}})=(\mathrm{Ad}_{hg}(x_{\mathcal{S}}),x_{\mathcal{S}})\in\gamma_{hg}$$ 
for all $h\in G_x$. This combines with (i) to imply that $\gamma_{hg}\in\mathrm{image}(\psi_x)$ for all $h\in G_x$, i.e. $$\kappa(G_xg)\subseteq\mathrm{image}(\psi_x).$$ Since $\mathrm{image}(\psi_x)$ is closed in $\overline{G}$, it must therefore contain the closure $\overline{\kappa(G_xg)}$ of $\kappa(G_xg)$ in $\overline{G}$. We also note that $\mathrm{image}(\psi_x)\cong\mathrm{Hess}(x,\mathfrak{m})$ is $\ell$-dimensional and irreducible (see \cite[Corollaries 3 and 14]{PrecupTransformation}), and that $$\dim(\overline{\kappa(G_xg)})=\dim(G_x)=\ell.$$ It follows that $$\overline{\kappa(G_xg)}=\mathrm{image}(\psi_x).$$ It remains only to invoke the ($G\times G$)-equivariance of $\kappa$ and conclude that
$$\overline{\kappa(G_xg)}=(e,g^{-1})\cdot\overline{\kappa(G_x)}=(e,g^{-1})\cdot\overline{G_x}.$$
\end{proof}

\begin{rem}
Assume that Conjecture \ref{Conjecture: Main conjecture} is true. If $x\in\mathcal{S}$, then one may apply Proposition \ref{Proposition: momentfibre}(ii) with $g=e$ and conclude that
$\mathrm{image}(\psi_x)=\overline{G_x}$. Corollary \ref{Corollary: Closed} therefore yields a $G_x$-equivariant isomorphism
$$\mathrm{Hess}(x,\mathfrak{m})\overset{\cong}\longrightarrow\overline{G_x}.$$ This is precisely the isomorphism obtained in \cite[Corollary 4.10]{BalibanuUniversal}.
\end{rem} 
\section*{Notation}
\begin{itemize}
\item $\g$ --- finite-dimensional complex semisimple Lie algebra
\item $n$ --- dimension of $\g$
\item $\ell$ --- rank of $\g$
\item $\g^{\text{r}}$ --- subset of regular elements in $\g$
\item $V^{\text{r}}$ --- the intersection $V\cap\g^{\text{r}}$ for any subset $V\subseteq\g$
\item $\g^{\text{rs}}$ --- subset of regular semisimple elements in $\g$
\item $\langle\cdot,\cdot\rangle$ --- Killing form on $\g$
\item $V^{\perp}$ --- annihilator of $V\subseteq\g$ with respect to $\langle\cdot,\cdot\rangle$
\item $G$ --- adjoint group of $\g$
\item $G_x$ --- $G$-stabilizer of $x\in\g$ with respect to the adjoint action
\item $\mu=(\mu_L,\mu_R):T^*G\longrightarrow\g\oplus\g$ --- moment map for the ($G\times G$)-action on $T^*G$
\item $\mu_{\mathcal{S}}:G\times\mathcal{S}\longrightarrow\g$ --- restriction of $\mu_L$ to $G\times\mathcal{S}$
\item $\tau$ ---  fixed principal $\mathfrak{sl}_2$-triple in $\g$
\item $\mathcal S$ --- Slodowy slice associated to $\tau$
\item $\chi:\g\longrightarrow\mathrm{Spec}(\mathbb{C}[\g]^G)$ --- adjoint quotient 
\item $x_{\mathcal S}$ --- unique point at which $\mathcal{S}$ meets the fibre $\chi^{-1}(\chi(x))$
\item $\overline{G}$ --- De Concini--Procesi wonderful compactification of $G$
\item $\g_\Delta$ --- diagonal in $\g\oplus\g$
\item $\gamma_g$ --- the point $(g,e)\cdot\g_\Delta\in\overline{G}$
\item $\kappa:G\longrightarrow\overline{G}$ --- open embedding defined by $g\mapsto\gamma_g$
\item $D$ --- the divisor $\overline{G}\setminus \kappa(G)$
\item $\overline{G_x}$ --- closure of $\kappa(G_x)$ in $\overline{G}$
\item $T^*\overline{G}(\log(D))$ --- log cotangent bundle of $(\overline{G},D)$
\item $\overline{\mu}=(\overline{\mu}_L,\overline{\mu}_R):T^*\overline{G}(\log(D))\longrightarrow\g\oplus\g$ --- moment map for the ($G\times G$)-action on $T^*\overline{G}(\log(D))$
\item $\overline{G\times\mathcal{S}}$ --- the Poisson slice $\overline{\mu}_R^{-1}(\mathcal{S})$ 
\item $\overline{\mu}_{\mathcal{S}}:\overline{G\times\mathcal{S}}\longrightarrow\g$ --- restriction of $\overline{\mu}_L$ to $\overline{G\times\mathcal{S}}$
\item $\mathcal{Z}_{\mathfrak{g}}$ --- universal centralizer of $\g$
\item $\overline{\mathcal{Z}_{\g}}$ --- B\u{a}libanu's fibrewise compactification of $\mathcal{Z}_{\g}$
\item $\mathcal B$ --- abstract flag variety of all Borel subalgebras in $\g$
\item $\mathfrak{m}$ --- standard Hessenberg subspace
\item $\nu:G\times_B\mathfrak{m}\longrightarrow \g$ --- moment map for the $G$-action on $G\times_B\mathfrak{m}$
\item $\mathrm{Hess}(x,\mathfrak{m})$ --- fibre of $\nu$ over $x\in\g$ 
\item $\Pi$ --- set of simple roots
\item $\p_I,\p_I^-$ --- standard parabolic, opposite parabolic associated to $I\subseteq\Pi$
\item $\lf_I$ --- standard Levi subalgebra $\p_I\cap\p_I^-$ associated with $I\subseteq \Pi$ 
\end{itemize}

\bibliographystyle{acm} 
\bibliography{MFSreg}
\end{document}